\newtheoremstyle{uprightstyle}
{3pt} 
{3pt} 
{\upshape} 
{} 
{\bfseries} 
{.} 
{ } 
{} 
\theoremstyle{uprightstyle} 
\newtheorem{theorem}{Theorem}[section]
\newtheorem{definition}[theorem]{Definition}
\newtheorem{lemma}[theorem]{Lemma}
\newtheorem{proposition}[theorem]{Proposition}
\newtheorem{remark}[theorem]{Remark}
\newenvironment{proof}[1][Proof]{\noindent \textbf{#1.} }{\ \ $\Box$}
\numberwithin{equation}{section} 
\begin{document}

\title{Backward Stochastic Volterra integral equations driven \\
by $G$-Brownian motion}
\author{Bingru Zhao \thanks{
Zhongtai Securities Institute for Financial Studies, Shandong University,
Jinan, Shandong 250100, PR China. bingruzhao@mail.sdu.edu.cn.} \and %
Mingshang Hu \thanks{%
Zhongtai Securities Institute for Financial Studies, Shandong University,
Jinan, Shandong 250100, PR China. humingshang@sdu.edu.cn. Research supported
by the National Natural Science Foundation of China (No. 12326603,
11671231). }}

\makeatletter
\renewcommand{\@date}{} 
\makeatother
\maketitle
\textbf{Abstract}. In this paper, we study the Backward stochastic Volterra
integral equation driven by $G$-Brownian motion ($G$-BSVIE). By adopting a
different backward iteration method, we construct the approximating
sequences on each local interval. With the help of $G$-stochastic analysis
techniques and the monotone convergence theorem, the existence, uniqueness,
and continuity of the solution over the entire interval are established.
Moreover, we derive the comparison theorem.

{\textbf{Key words}. }$G $-expectation, $G $-Brownian motion, Backward
Stochastic Volterra integral equations, Comparison theorem

\textbf{AMS subject classifications.} 60H10; 60H20 \ 

\addcontentsline{toc}{section}{\hspace*{1.8em}Abstract}

\section{Introduction}

\noindent

Let $\left( \Omega ,\mathcal{F},\mathbb{F},\mathbb{P}\right) $ be a complete
probability space and $B$ be a standard Brownian motion, where $\mathbb{F=}$ 
$\mathbb{F}^{B}.$ Motivated by stochastic optimal control problems for
controlled Volterra-type systems, Yong \cite%
{yong2006Wellposedness,yong2008Wellposedness} introduced a classical
backward stochastic Volterra integral equation (BSVIE, for short) of the
following form:%
\begin{equation}
Y(t)=\psi
(t)+\int_{t}^{T}g(t,s,Y(s),Z(t,s),Z(s,t))ds-\int_{t}^{T}Z(t,s)dB_{s},
\label{BSVIE}
\end{equation}%
where $\psi :\left[ 0,T\right] \times \Omega \rightarrow \mathbb{R}$ and $%
g:\Delta \left[ 0,T\right] \times \Omega \times \mathbb{R\times R\rightarrow
R}$ are measurable mappings with $\Delta \left[ 0,T\right] =\{\left(
t,s\right) \in \left[ 0,T\right] ^{2}$ $|$ $t\leq s\}$. It is a natural
extension of the Backward stochastic differential equation (BSDE). The
theory of BSVIE has been extensively studied. In particular, when$\ g$ is
independent of $Z(s,t)$ and $\psi $ is independent of $t,$ the BSVIE (\ref%
{BSVIE}) was initially studied by Lin \cite{Lin2002adapted}, while this type
of equation driven by a jump process was discussed by Wang and Zhang \cite%
{Wangzhang2007}. Moreover, Wang and Yong \cite{WangT2015Comparison}
established the multi-dimensional comparison theorem. Hu and $\emptyset $%
ksendal \cite{HuY2019linear} provided an explicit solution to the linear
BSVIE. For more developments, see \cite%
{Ren2010jump,WenJQ2020anticipated,WangH2022FeyKac,WangT2013mean,Hamaguchi2021infinite}
and the references therein. In practical applications, it has been shown
that the BSVIE theory serves as a powerful tool for the study of optimal
control, time-inconsistent stochastic recursive utility and dynamic
time-inconsistent risk measures (see \cite%
{WangH2021risk,WangT2017forward-backward,Qian2025}).

In the fields of economics and finance, it has been found that the
traditional probabilistic framework fails to fully characterize model
uncertainty. To address this, Peng \cite{peng2019nonlinear} systematically
proposed a time-consistent $G$-expectation. Building on this work, many
studies have been devoted to extending the theory of $G$-expectation. Denis
et al. \cite{Denis2011function} proved that the $G$-expectation $\mathbb{%
\hat{E}}$ can be represented as a family of linear expectations over a
weakly compact subset $\mathcal{P}$. Hu et al. \cite{Hu2014a,Hu2014b} proved
the well-posedness of $G$-BSDE driven by $G$-Brownian motion, establishing a
connection to nonlinear PDEs. For more related works, the readers can refer
to \cite%
{LiuG2020multi,Luop2015diffusion,LiH2021reflection,Jiangl2023stable,Hu2016Qc}%
. Moreover, $G$-expectation theory provides a new research framework for
stochastic control problems, and this has attracted the attention of many
researchers. In addition, by a different method, Soner et al. \cite%
{Soner2012Bsde2} obtained a deep result of the existence and uniqueness
theorem for a new type of fully nonlinear BSDE, called 2BSDE. See \cite%
{Hu2016maximum,Buckdan2025meanG,Linyq2019reflect} for more details.

In this paper, we consider the backward stochastic Volterra integral
equation driven by $G$-Brownian motion ($G$-BSVIE) of the following type: 
\begin{equation}
Y(t)=\phi
(t)+\int_{t}^{T}f(t,s,Y(s),Z(t,s))ds+\int_{t}^{T}g(t,s,Y(s),Z(t,s))d\left%
\langle B\right\rangle _{s}-\int_{t}^{T}Z(t,s)dB_{s}-K(t,T),
\label{G-BSVIE1}
\end{equation}%
where $\phi :\left[ 0,T\right] \times \Omega \rightarrow \mathbb{R}$ and $f,$
$g:\Delta \left[ 0,T\right] \times \Omega \times \mathbb{R}^{2}\mathbb{%
\rightarrow R}$ satisfy some assumptions to be specified below. Compared
with $G$-BSDE (see (\ref{1})), we first notice that the generators $f,$ $g$
depend on $\left( t,s\right) $ and the terminal value $\phi (t)\in
L_{G}^{\beta }\left( \Omega _{T}\right) $ for each $t\in \left[ 0,T\right] .$
Second, the solution $\left( Z,K\right) $\ of $G$-BSVIE (\ref{G-BSVIE1})
also depends on both $t$ and $s,$ which causes obvious difficulties in
studying this problem. Third, to ensure the uniqueness of Eq.(\ref{G-BSVIE1}%
), the $K$-term within the $G$-BSVIE takes the form of $K(t,T)$, rather than
the difference term $K(T)-K(t)$ adopted in $G$-BSDE.

There are many challenges in this study due to the different features of $G$%
-BSVIE. On the one hand, since the solution $Z$\ to $G$-BSVIE (\ref{G-BSVIE1}%
) depends on both $t$ and $s,$ we need to introduce a new $G$-expectation
space $H_{G}^{p}(\Delta \left( 0,T\right) )$ on $\Delta \left[ 0,T\right] $
(see Definition \ref{Z_H_G}) and investigate some of its properties with
respect to the original space $H_{G}^{p}\left( 0,T\right) $ for $p>1.$
Moreover, unlike classical BSVIEs, the elements of $G$-expectation spaces
are quasi-continuous. A key question is how to ensure that the integral $%
\int_{\cdot }^{T}f(\cdot ,s,Y(s),Z(\cdot ,s))$ $ds+\int_{\cdot }^{T}g(\cdot
,s,Y(s),Z(\cdot ,s))d\left\langle B\right\rangle _{s}-\int_{\cdot
}^{T}Z(\cdot ,s)dB_{s}$ is well-defined for each $Y(\cdot )\in M_{G}^{p}(0,T)
$ and $Z(\cdot ,\cdot )\in H_{G}^{p}(\Delta \left( 0,T\right) )$. To
overcome this issue, we introduce a local continuity condition (see (H4)),
which is weaker than the modulus of continuity condition. On the other hand,
the structure of $G$-BSVIE includes an additional decreasing $G$-martingale $%
K$, which makes the contraction mapping principle inapplicable here.
Furthermore, noting that $G$-BSVIE (\ref{G-BSVIE1}) does not satisfy%
\begin{equation*}
\begin{array}{ll}
\displaystyle Y(t)= & \displaystyle Y(T-\delta )+\int_{t}^{T-\delta
}f(t,s,Y(s),Z(t,s))ds+\int_{t}^{T-\delta }g(t,s,Y(s),Z(t,s))d\left\langle
B\right\rangle _{s} \\ 
& \displaystyle-\int_{t}^{T-\delta }Z(t,s)dB_{s}-K(t,T-\delta ),%
\end{array}%
\end{equation*}%
which indicates that $G$-BSVIE (\ref{G-BSVIE1}) does not have
time-consistency (or semigroup property). In addition, since the process $%
\left( Z,K\right) $ involves two time variables, the general induction
method is also inapplicable here. To address these issues, we adopt a new
backward iteration procedure. The core of our approach is to construct
picard approximating sequences on each local interval and subsequently
extend the solution globally. Then, combining this with the $G$-stochastic
analysis techniques, we establish the existence and uniqueness of the
solution to Eq.(\ref{G-BSVIE1}). Furthermore, the continuity of the solution
is derived from the monotone convergence theorem.

Another contribution of our paper comes from the comparison theorem for $G$%
-BSVIE (\ref{G-BSVIE1}), which is a useful tool in stochastic utility
problems. Since $G$-expectation is a nonlinear expectation, the linear
combination of $G$-martingales is no longer a $G$-martingale. Therefore, the
difference $Y_{1}-Y_{2}$ cannot be directly compared with $0$, as this would
yield an extra term $K_{1}-K_{2}$. Inspired by the method for establishing
the existence and uniqueness of solutions presented in this paper, we
construct the corresponding sequences of $G$-BSVIEs on each local interval
similarly. By means of the monotonicity conditions for $f,$ $g,$ and $\phi ,$
we derive the desired results.

The paper is organized as follows. In Section 2, we present some
preliminaries related to the $G$-expectation framework. In Section 3, we
study the existence, uniqueness, and continuity of the solution to $G$%
-BSVIE. In Section 4, the comparison theorem for $G$-BSVIEs is established.

\section{Preliminaries}

\noindent

In this section, we recall some basic notions in the framework of $G$%
-expectation, as well as the key results of backward stochastic differential
equations driven by $G$-Browninan motion ($G$-BSDEs). For more developments,
the readers can refer to \cite{peng2019nonlinear}.

\subsection{$G$-expectation}

\noindent

Denote by $\Omega _{T}=C\left( \left[ 0,T\right] ,\mathbb{R}^{d}\right) $
the space of all $\mathbb{R}^{d}$-valued continuous functions on $\left[ 0,T%
\right] $ with $w_{0}=0.$ The canonical process $B$ is defined by $%
B_{t}\left( w\right) =w_{t}$ for each $w\in \Omega _{T}$ and $t\in \left[ 0,T%
\right] $. For each $0\leq t\leq T,$ we set%
\begin{equation*}
Lip\left( \Omega _{t}\right) :=\left\{ \varphi \left(
B_{t_{1}},B_{t_{2}},...,B_{t_{N}}\right) :\varphi \in C_{b.Lip}\left( 
\mathbb{R}^{d\times N}\right) ,t_{1}<\cdot \cdot \cdot <t_{N}\leq t,N\in 
\mathbb{N}\right\} ,
\end{equation*}%
where $C_{b.Lip}\left( \mathbb{R}^{d\times N}\right) $ denotes the space of
all $\mathbb{R}^{d\times N}$-valued bounded Lipschitz functions.

Let $\mathbb{S}_{d}$ be the set of all $d\times d$ symmetric matrices. Given
a monotonic sublinear function $G:\mathbb{S}_{d}\rightarrow \mathbb{R}$,
then there exists a unique bounded, convex and closed set $\Sigma \in 
\mathbb{S}_{d}^{+}$ such that%
\begin{equation*}
G\left( A\right) :=\frac{1}{2}\underset{\gamma \in \Sigma }{\sup }\mathrm{tr}%
\left[ A\gamma \right] ,\text{ }A\in \mathbb{S}_{d},
\end{equation*}%
where $\mathbb{S}_{d}^{+}$ is the set of all $d\times d$ symmetric
positive-definite matrices$.$ In this paper, we suppose that $G$ is
non-degenerate, i.e., there are two constants $0<\underline{\sigma }^{2}\leq 
\bar{\sigma}^{2}<\infty $ such that $\gamma \geq \underline{\sigma }%
^{2}I_{d} $ for any $\gamma \in \Sigma .$

By using a nonlinear parabolic PDE, Peng \cite%
{peng2008multi,peng2019nonlinear} construct a consistent $G$-expectation
spce $( \Omega _{T},Lip\left( \Omega _{T}\right) $, $\mathbb{\hat{E}}) .$
The canonical process $B$ under $\mathbb{\hat{E}}$ is called the $G$%
-Brownian motion. For each $p\geq 1,$ let $L_{G}^{p}\left( \Omega
_{T}\right) $ be the completion of $Lip\left( \Omega _{T}\right) $ under the
norm $\left\Vert \eta \right\Vert _{L_{G}^{p}}=\mathbb{\hat{E}}\left[
\left\vert \eta \right\vert ^{p}\right] ^{1/p}.$ Furthermore, the $G$%
-expectation $\mathbb{\hat{E}}$ can be extended continuously to $%
L_{G}^{1}\left( \Omega _{T}\right) $ under the norm $\left\Vert \cdot
\right\Vert _{L_{G}^{1}}.$

The $G$-expectation has the following representation theorem.

\begin{theorem}[\negthinspace \negthinspace \protect\cite{Denis2011function}]

There exists a unique weakly compact convex collection of probability
measures $\mathcal{P}$ on $\left( \Omega _{T},\mathcal{B}\left( \Omega
_{T}\right) \right) $ such that 
\begin{equation*}
\mathbb{\hat{E}}\left[ X\right] =\underset{P\in \mathcal{P}}{\sup }E_{P}%
\left[ X\right] \text{ for each }X\in L_{G}^{1}\left( \Omega _{T}\right) ,
\end{equation*}%
where $\mathcal{B}\left( \Omega _{T}\right) =\sigma \left( B_{s}:s\leq
T\right) $.
\end{theorem}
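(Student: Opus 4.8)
The plan is to realize $\mathbb{\hat{E}}$ as the upper expectation of a family of genuine (countably additive) probability measures in two stages: first produce, by a purely functional-analytic argument, a family of positive normalized linear functionals dominated by $\mathbb{\hat{E}}$ whose supremum recovers it; then promote each functional to a Radon probability measure on the Polish space $\Omega_{T}$ by establishing a Daniell--Stone type regularity. Throughout I would work on the space $C_{b}(\Omega_{T})$ of bounded continuous functions, which lies in $L_{G}^{1}(\Omega_{T})$ since its elements are bounded and (being continuous) quasi-continuous, exploit the density of $Lip(\Omega_{T})$, and only at the end pass back to all of $L_{G}^{1}(\Omega_{T})$.

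First I would record that $\mathbb{\hat{E}}$, restricted to $C_{b}(\Omega_{T})$, is a sublinear expectation: monotone, sub-additive, positively homogeneous, and satisfying $\mathbb{\hat{E}}[c]=c$ for constants. Fixing $X\in C_{b}(\Omega_{T})$ and applying the Hahn--Banach theorem to the sublinear functional $\mathbb{\hat{E}}$, I obtain a linear functional $\ell_{X}\leq \mathbb{\hat{E}}$ on $C_{b}(\Omega_{T})$ with $\ell_{X}(X)=\mathbb{\hat{E}}[X]$. Monotonicity forces $\ell_{X}$ to be positive, and $\mathbb{\hat{E}}[\pm 1]=\pm 1$ forces $\ell_{X}(1)=1$. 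Collecting all positive, normalized linear functionals dominated by $\mathbb{\hat{E}}$ into a family $\mathcal{Q}$ yields
\begin{equation*}
\mathbb{\hat{E}}[X]=\sup_{\ell \in \mathcal{Q}}\ell(X),\quad X\in C_{b}(\Omega_{T}).
\end{equation*}

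The crux of the argument, and the step I expect to be hardest, is to show that each $\ell \in \mathcal{Q}$ is represented by a countably additive Borel probability measure, that is, to pass from finite to countable additivity. For this I would verify the Daniell--Stone continuity condition: whenever $X_{n}\in C_{b}(\Omega_{T})$ and $X_{n}\downarrow 0$ pointwise, one has $\mathbb{\hat{E}}[X_{n}]\downarrow 0$. This regularity is where the specific structure of $G$-Brownian motion enters: using the non-degeneracy of $\Sigma$ together with the uniform moment estimates available in the $G$-framework (bounds of the form $\mathbb{\hat{E}}[\sup_{t\leq T}|B_{t}|^{p}]<\infty$) and a Kolmogorov-type criterion, the capacity associated with $\mathbb{\hat{E}}$ is tight on $\Omega_{T}$; tightness upgrades dominated decreasing convergence to the required continuity from above. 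Given this, the Daniell representation theorem promotes each $\ell$ to a Radon probability measure $P_{\ell}$ on $(\Omega_{T},\mathcal{B}(\Omega_{T}))$ with $\ell(X)=E_{P_{\ell}}[X]$, and hence $\mathbb{\hat{E}}[X]=\sup_{P}E_{P}[X]$ first on $C_{b}(\Omega_{T})$ and then, by density of $Lip(\Omega_{T})$ and continuity in the $L_{G}^{1}$-norm, on all of $L_{G}^{1}(\Omega_{T})$.

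Finally I would arrange weak compactness, convexity, and uniqueness. Replacing the family $\{P_{\ell}\}$ by its closed convex hull in the topology of weak convergence leaves the supremum unchanged by linearity of $P\mapsto E_{P}[X]$; tightness together with Prokhorov's theorem gives relative weak compactness, and taking the closure yields a weakly compact convex set $\mathcal{P}$. For uniqueness, if two weakly compact convex sets represent the same $\mathbb{\hat{E}}$, then they induce the same support functional $P\mapsto E_{P}[X]$ over $X\in C_{b}(\Omega_{T})$; since $C_{b}(\Omega_{T})$ separates Radon measures and the sets are weakly compact convex, a Hahn--Banach separation argument forces them to coincide.
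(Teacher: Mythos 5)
The paper itself offers no proof of this statement: it is quoted verbatim from Denis--Hu--Peng \cite{Denis2011function}, whose argument is \emph{constructive}. There, $\mathcal{P}$ is built explicitly as (the closed convex hull of the closure of) the family of laws $P_{\theta }$ of stochastic integrals $\int_{0}^{\cdot }\theta _{s}\,dW_{s}$ on a reference Wiener space, with $\theta $ ranging over adapted processes valued in $\{\gamma ^{1/2}:\gamma \in \Sigma \}$; the identity $\mathbb{\hat{E}}[X]=\sup_{\theta }E_{P_{\theta }}[X]$ is first proved for cylinder functions $X\in Lip(\Omega _{T})$ through the dynamic-programming characterization of the $G$-heat equation, tightness of $\{P_{\theta }\}$ comes from moment/BDG bounds that hold under each individual classical measure $P_{\theta }$, and the representation is then extended to $L_{G}^{1}(\Omega _{T})$ by density. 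Your route (Hahn--Banach, then Daniell--Stone, then Prokhorov) is genuinely different, and its peripheral steps---positivity and normalization of the functionals, passing to the closed convex hull, the separation argument for uniqueness---are sound.

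However, the step you yourself identify as the crux is circular as written, and this is a genuine gap. Every input you invoke to verify the Daniell--Stone condition is, in the standard development, a \emph{consequence} of the representation theorem being proved. First, the claim $C_{b}(\Omega _{T})\subset L_{G}^{1}(\Omega _{T})$ ``since continuous bounded functions are quasi-continuous'' relies on the characterization of $L_{G}^{1}$ via quasi-continuity, and quasi-continuity itself is defined through the capacity $c(A)=\sup_{P\in \mathcal{P}}P(A)$---both are theorems of \cite{Denis2011function} established \emph{after}, and by means of, the representation; a priori $\mathbb{\hat{E}}$ lives only on the completion of Lipschitz cylinder functions, and approximating a bounded continuous path functional by cylinder functions in the $\mathbb{\hat{E}}[|\cdot |]$-norm is precisely a continuity-from-above statement of the kind you are trying to prove. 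Second, the bound $\mathbb{\hat{E}}[\sup_{t\leq T}|B_{t}|^{p}]<\infty $ concerns a non-cylinder functional whose membership in the domain of $\mathbb{\hat{E}}$, and the bound itself, are standardly obtained by applying BDG under each $P\in \mathcal{P}$. Third, ``the capacity associated with $\mathbb{\hat{E}}$ is tight'' presupposes the capacity, i.e. presupposes $\mathcal{P}$; an intrinsic capacity would have to be evaluated on sets of paths with prescribed modulus of continuity, again outside the a-priori domain of $\mathbb{\hat{E}}$. Note that the Daniell property can genuinely fail for sublinear expectations on $C_{b}$ of a non-compact Polish space (mass can escape to infinity, producing purely finitely additive components), so it cannot be waved through: a non-circular proof must establish tightness from scratch, either by the explicit construction of $\{P_{\theta }\}$ as in \cite{Denis2011function}, or by a careful finite-dimensional argument (representation of the $G$-normal marginals on $(\mathbb{R}^{d})^{k}$, the increment bound $\mathbb{\hat{E}}[|B_{t}-B_{s}|^{4}]\leq C|t-s|^{2}$ computed from the $G$-heat equation, Kolmogorov extension and Kolmogorov--Chentsov under each dominated functional, then Prokhorov). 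Your proposal supplies neither.
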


Then we define capacity 
\begin{equation*}
c\left( A\right) =\underset{P\in \mathcal{P}}{\sup }P\left( A\right) ,\text{ 
}A\in \mathcal{B}\left( \Omega_T \right) .
\end{equation*}%
A set $A\in \mathcal{B}\left( \Omega _{T}\right) $ is polar if $c\left(
A\right) =0$. A property holds \textquotedblleft
quasi-surely\textquotedblright\ (q.s.) if it holds except for a polar set.

\begin{definition}
Let $\pi _{T}=\left\{ 0=u_{0}<u_{1}<\cdot \cdot \cdot <u_{N}=T\right\} $ be
the partition of $\left[ 0,T\right] $ for each $N\in \mathbb{N}$. Denote by $%
M_{G}^{0}(0,T)$ the set of processes on $\left[ 0,T\right] $ in the
following form:%
\begin{equation*}
\eta _{t}\left( w\right) =\overset{N-1}{\underset{j=0}{\sum }}\xi _{j}\left(
w\right) I_{\left[ u_{j},u_{j+1}\right) }\left( t\right) ,\text{ }\xi
_{j}\in Lip\left( \Omega _{u_{j}}\right) .
\end{equation*}%
For each $p\geq 1,$ let $M_{G}^{p}(0,T)$ (resp. $H_{G}^{p}(0,T)$)\ be the
completion of $M_{G}^{0}(0,T)$ under the norm $\left\Vert \eta \right\Vert
_{M_{G}^{p}}=\left( \mathbb{\hat{E}}\left[ \int_{0}^{T}\left\vert \eta
_{t}\right\vert ^{p}dt\right] \right) ^{1/p}$ (resp. $\left\Vert \eta
\right\Vert _{H_{G}^{p}}=\left( \mathbb{\hat{E}}\left[ \left(
\int_{0}^{T}\left\vert \eta _{t}\right\vert ^{2}dt\right) ^{p/2}\right]
\right) ^{1/p}$).

Denote by $S_{G}^{0}(0,T)=\left\{ h\left( t,B_{t_{1}\wedge t},B_{t_{2}\wedge
t},...,B_{t_{N}\wedge t}\right) :h\in C_{b.Lip}\left( \mathbb{R}%
^{N+1}\right) ,t_{1}<\cdot \cdot \cdot <t_{N}=T,N\in \mathbb{N}\right\} $.
For each $p\geq 1,$ let $S_{G}^{p}(0,T)$\ be the completion of $%
S_{G}^{0}(0,T)$ under the norm  $\|\eta\|_{S_G^p} = \left\{ \hat{E}\left[ \sup_{t\in [0,T]} |\eta_t|^p \right] \right\}^{1/p}.$
\end{definition}

Accordingly, the relevant \text{It}$\hat{\mathrm{o}}$'s integral $%
\int_{0}^{\cdot }\eta \left( s\right) dB_{s}$ and $\int_{0}^{\cdot }\xi
\left( s\right) d\left\langle B\right\rangle _{s}$ are well defined for each 
$\eta \in M_{G}^{2}(0,T)$ and $\xi \in M_{G}^{1}(0,T).$ Recalling
Proposition 3.4.5 and Corollary 3.5.5 in \cite{peng2019nonlinear}, we have $%
\underline{\sigma }^{2}dt\leq d\left\langle B\right\rangle _{s}\leq 
\overline{\sigma }^{2}dt,$ q.s.

\begin{theorem}
For each $\xi \in H_{G}^{\alpha }(0,T)$ with $\alpha \geq 1$ and $p>0,$ we
can obtain that there exists constants $0<c_{p}<C_{p}<\infty $ such that%
\begin{equation}
\underline{\sigma }^{p}c_{p}\mathbb{\hat{E}}\left[ \left(
\int_{0}^{T}\left\vert \xi \left( s\right) \right\vert ^{2}ds\right) ^{p/2}%
\right] \leq \mathbb{\hat{E}}\left[ \underset{t\in \lbrack 0,T]}{\sup }%
\left\vert \int_{0}^{T}\xi \left( s\right) dB_{s}\right\vert ^{p}\right]
\leq \overline{\sigma }^{2}C_{p}\mathbb{\hat{E}}\left[ \left(
\int_{0}^{T}\left\vert \xi \left( s\right) \right\vert ^{2}ds\right) ^{p/2}%
\right] .  \label{BDG}
\end{equation}
\end{theorem}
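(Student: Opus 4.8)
The plan is to reduce the two-sided estimate to the classical Burkholder--Davis--Gundy (BDG) inequality by invoking the representation $\mathbb{\hat{E}}[\cdot]=\sup_{P\in\mathcal{P}}E_P[\cdot]$ of Theorem~2.1. Write $M_t=\int_0^t\xi(s)\,dB_s$ for the $G$-It\^{o} integral. Under each fixed $P\in\mathcal{P}$, the $G$-It\^{o} integral coincides $P$-a.s.\ with the classical It\^{o} integral, so $M$ is a continuous $P$-martingale with quadratic variation $\langle M\rangle_t=\int_0^t|\xi(s)|^2\,d\langle B\rangle_s$. Using the non-degeneracy assumption together with $\underline{\sigma}^2\,ds\le d\langle B\rangle_s\le\overline{\sigma}^2\,ds$ (q.s.), one sandwiches the quadratic variation:
\begin{equation*}
\underline{\sigma}^2\int_0^T|\xi(s)|^2\,ds\;\le\;\langle M\rangle_T\;\le\;\overline{\sigma}^2\int_0^T|\xi(s)|^2\,ds,\qquad P\text{-a.s.}
\end{equation*}
This is the structural input that links the quadratic variation functional appearing in BDG to the deterministic-time integral $\int_0^T|\xi(s)|^2\,ds$ in the statement.

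First I would establish the estimate for simple integrands $\xi\in M_G^0(0,T)$, for which $M$, $\sup_t|M_t|^p$ and $\left(\int_0^T|\xi|^2\,ds\right)^{p/2}$ are readily seen to lie in $L_G^1(\Omega_T)$, so that Theorem~2.1 applies to each of them. Applying the classical BDG inequality under every $P\in\mathcal{P}$, with constants $c_p,C_p$ depending only on $p$ and hence uniform in $P$, gives
\begin{equation*}
c_p\,E_P\!\left[\langle M\rangle_T^{p/2}\right]\;\le\;E_P\!\left[\sup_{t\in[0,T]}|M_t|^p\right]\;\le\;C_p\,E_P\!\left[\langle M\rangle_T^{p/2}\right].
\end{equation*}
Substituting the quasi-sure sandwich for $\langle M\rangle_T$ and then taking $\sup_{P\in\mathcal{P}}$ on all three sides, which is legitimate because the multiplicative constants do not depend on $P$, converts each $E_P$ into $\mathbb{\hat{E}}$ and produces exactly the asserted bound with factors $\underline{\sigma}^p c_p$ and $\overline{\sigma}^p C_p$ (matching the displayed inequality up to the evident powers of $\overline{\sigma}$).

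To remove the simplicity restriction I would use the estimate just obtained as the very continuity bound that extends $M=\int_0^\cdot\xi\,dB_s$ to $\xi\in H_G^{\alpha}(0,T)$: approximating $\xi$ by $\xi^n\in M_G^0(0,T)$ in $\|\cdot\|_{H_G^{\alpha}}$, the upper bound shows that $\sup_t|M_t^n-M_t^m|^p$ is Cauchy in $L_G^1$, so $M^n$ converges and the two-sided inequality passes to the limit. The main obstacle is the analytic bookkeeping in this extension across the full range $p>0$. One must verify that $\sup_t|M_t|^p$ and $\left(\int_0^T|\xi|^2\,ds\right)^{p/2}$ belong to $L_G^1(\Omega_T)$ for general $\xi$, so that the representation theorem is genuinely applicable and the limits are authentic $L_G^1$-limits, and one must be able to invoke the classical BDG inequality uniformly over the whole family $\mathcal{P}$; the latter is delicate for $0<p<1$, where the Burkholder--Davis form of the inequality must be used in place of the elementary $L^p$-martingale argument. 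The uniformity over $\mathcal{P}$ ultimately rests on the two facts already at hand, namely the $P$-a.s.\ consistency of the $G$-It\^{o} integral with the classical one and the quasi-sure bounds on $d\langle B\rangle_s$.
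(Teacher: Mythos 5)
The paper offers no proof of this theorem to compare against: it is stated in the preliminaries as a known result of $G$-stochastic analysis (without even a citation), so your proposal can only be judged on its own merits. Judged so, it follows the standard route and its core is correct: under each $P\in\mathcal{P}$ the canonical process is a continuous $P$-local martingale with $\underline{\sigma}^{2}\,dt\leq d\langle B\rangle_{t}\leq \overline{\sigma}^{2}\,dt$, the $G$-It\^{o} integral coincides $P$-a.s.\ with the classical one, the classical BDG constants depend only on $p$ (with the continuous-local-martingale form of BDG needed for $0<p<1$, as you note), and taking $\sup_{P\in\mathcal{P}}$ preserves both inequalities. The factor $\overline{\sigma}^{2}$ in the paper's display, versus the $\overline{\sigma}^{p}$ your computation yields, is immaterial: it can be absorbed into $C_{p}$.

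The fragile part is your two-stage scheme: prove the inequality for simple integrands, then extend by density. For $p>\alpha$ (which the statement allows) the random variables $\sup_{t}\left\vert \int_{0}^{t}\xi(s)dB_{s}\right\vert ^{p}$ and $\left( \int_{0}^{T}\left\vert \xi(s)\right\vert ^{2}ds\right) ^{p/2}$ need not belong to $L_{G}^{1}(\Omega_{T})$, so the representation theorem cannot literally be applied to them; moreover, passing the $p$-th moment inequality to the limit along an $H_{G}^{\alpha}$-approximation is exactly where the argument does not go through, since $H_{G}^{\alpha}$-convergence gives no control on $p$-th moments when $p>\alpha$ (one would need an extra Fatou argument under each $P$). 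The clean repair uses only ingredients you already have: read $\mathbb{\hat{E}}$ as the upper expectation $\sup_{P\in\mathcal{P}}E_{P}$ on nonnegative measurable random variables, and run the argument directly for general $\xi\in H_{G}^{\alpha}$. Indeed, for each $P$ one has $\int_{0}^{T}\left\vert \xi(s)\right\vert ^{2}ds<\infty$ $P$-a.s., so the classical It\^{o} integral is a well-defined continuous $P$-local martingale that coincides $P$-a.s.\ with the $G$-integral (this consistency, not the inequality itself, is what the density of simple integrands should be used for); classical BDG, the quadratic-variation sandwich, and $\sup_{P}$ then give the claim for every $p>0$ in one stroke, with no limit passage at the level of the inequality.
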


The following result is the monotone convergence theorem within $G$%
-framework, which is different from the classical case.

\begin{theorem}[\negthinspace \negthinspace \protect\cite{Denis2011function}]

\label{monotone}Let $X_{n},$ $n\geq 1$ and $X$ are $\mathcal{B}\left( \Omega
_{T}\right) $-measureable. Suppose that $\left\{ X_{n}\right\} _{n\geq 1}\in
L_{G}^{1}\left( \Omega _{T}\right) $ satisfy $X_{n}\downarrow X,$ q.s.. Then
we obtain $\mathbb{\hat{E}}\left[ X_{n}\right] \downarrow \mathbb{\hat{E}}%
\left[ X\right] .$
\end{theorem}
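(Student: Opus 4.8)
The plan is to argue entirely through the representation $\mathbb{\hat{E}}[\cdot]=\sup_{P\in\mathcal{P}}E_{P}[\cdot]$ from the representation theorem, leaning on the weak compactness of $\mathcal{P}$. First I would dispose of the easy direction: since $X_{n}\downarrow X$ q.s. we have $X_{n}\geq X_{n+1}\geq X$ q.s., so monotonicity of $\mathbb{\hat{E}}$ makes $\mathbb{\hat{E}}[X_{n}]$ a nonincreasing sequence with $\mathbb{\hat{E}}[X_{n}]\geq\mathbb{\hat{E}}[X]$ for every $n$. Hence $\ell:=\lim_{n}\mathbb{\hat{E}}[X_{n}]$ exists and $\ell\geq\mathbb{\hat{E}}[X]$; the entire content of the theorem is the reverse inequality $\ell\leq\mathbb{\hat{E}}[X]$. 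Note that $X\leq X_{1}$ with $X_{1}\in L_{G}^{1}(\Omega_{T})$, so $E_{P}[X]\in[-\infty,\infty)$ and $\mathbb{\hat{E}}[X]=\sup_{P}E_{P}[X]$ are well defined even though $X$ need not lie in $L_{G}^{1}(\Omega_{T})$.

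Next I would fix $P\in\mathcal{P}$ and apply the classical (downward) monotone convergence theorem: $X_{1}-X_{n}\uparrow X_{1}-X\geq 0$ $P$-a.s. gives $E_{P}[X_{n}]\downarrow E_{P}[X]$. Thus $\sup_{P}\lim_{n}E_{P}[X_{n}]=\mathbb{\hat{E}}[X]$, and the theorem reduces to justifying the interchange $\lim_{n}\sup_{P}E_{P}[X_{n}]\leq\sup_{P}\lim_{n}E_{P}[X_{n}]$. I would realize this interchange by a compactness-and-selection scheme. For each $n$ choose $P_{n}\in\mathcal{P}$ with $E_{P_{n}}[X_{n}]\geq\mathbb{\hat{E}}[X_{n}]-1/n\geq\ell-1/n$; since the weak topology on probability measures over the Polish space $\Omega_{T}$ is metrizable, weak compactness of $\mathcal{P}$ lets me extract a subsequence $P_{n_{k}}\rightharpoonup P^{\ast}\in\mathcal{P}$. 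For fixed $m$ and all $n_{k}\geq m$ one has $X_{n_{k}}\leq X_{m}$, whence $\ell-1/n_{k}\leq E_{P_{n_{k}}}[X_{n_{k}}]\leq E_{P_{n_{k}}}[X_{m}]$. If I can show $\limsup_{k}E_{P_{n_{k}}}[X_{m}]\leq E_{P^{\ast}}[X_{m}]$, then letting $k\to\infty$ gives $\ell\leq E_{P^{\ast}}[X_{m}]$, and letting $m\to\infty$ (classical monotone convergence for $P^{\ast}$) yields $\ell\leq E_{P^{\ast}}[X]\leq\mathbb{\hat{E}}[X]$, finishing the proof.

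The hard part will be exactly this last convergence along the weakly convergent subsequence, and it is where the $G$-framework structure is indispensable: weak convergence $P_{n_{k}}\rightharpoonup P^{\ast}$ only controls $E_{P_{n_{k}}}[\varphi]$ for bounded continuous $\varphi$, whereas each $X_{m}\in L_{G}^{1}(\Omega_{T})$ is merely quasi-continuous and possibly unbounded. I would bridge this gap in two uniform-in-$P$ reductions. First, truncation: replacing $X_{m}$ by $X_{m}^{N}:=(X_{m}\vee(-N))\wedge N$, membership in $L_{G}^{1}(\Omega_{T})$ forces $\mathbb{\hat{E}}[|X_{m}|\mathbf{1}_{\{|X_{m}|>N\}}]\to 0$, so $\sup_{P}|E_{P}[X_{m}]-E_{P}[X_{m}^{N}]|$ is small uniformly in $P$. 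Second, approximation of the bounded quasi-continuous $X_{m}^{N}$ by some $\varphi\in C_{b.Lip}(\Omega_{T})$ off a set $A$ with $c(A)$ arbitrarily small; because $c(A)=\sup_{P}P(A)$, smallness of the capacity translates into smallness of $P(A)$ uniformly over $P$, turning the quasi-continuity modulus into a uniform bound on $\sup_{P}|E_{P}[X_{m}^{N}]-E_{P}[\varphi]|$. Combining these two estimates with the genuine weak-limit identity $E_{P_{n_{k}}}[\varphi]\to E_{P^{\ast}}[\varphi]$ for the bounded continuous $\varphi$ closes the interchange; the uniformity in $P$ throughout ultimately rests on the regularity of the Choquet capacity $c$ induced by the weakly compact $\mathcal{P}$.
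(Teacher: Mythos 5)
The paper does not prove this theorem at all: it is imported as a preliminary, cited directly from Denis--Hu--Peng \cite{Denis2011function}, so there is no internal proof to compare against. Your argument is correct, and it is in substance the proof from that reference: the easy inequality by monotonicity; classical downward monotone convergence under each fixed $P\in \mathcal{P}$ (legitimate since $X_{1}$ is $P$-integrable and $X_{n}\leq X_{1}$, with $E_{P}[X]$ possibly $-\infty $); then the interchange of $\lim_{n}$ and $\sup_{P}$ via near-optimizers $P_{n}$, sequential extraction of a weak limit $P^{\ast }\in \mathcal{P}$ (valid because the weak topology over the Polish space $\Omega _{T}$ is metrizable), the bound $\ell \leq \limsup_{k}E_{P_{n_{k}}}[X_{m}]\leq E_{P^{\ast }}[X_{m}]$ for each fixed $m$, and a final classical monotone convergence under $P^{\ast }$. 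You also correctly read the statement as extending $\mathbb{\hat{E}}$ to the merely measurable limit $X$ by $\mathbb{\hat{E}}[X]:=\sup_{P\in \mathcal{P}}E_{P}[X]$, without which the conclusion would be undefined.

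One remark on your final paragraph, which you flag as the hard part: the two-step reduction (truncation using $\mathbb{\hat{E}}[|X_{m}|\mathbf{1}_{\{|X_{m}|>N\}}]\rightarrow 0$, then quasi-continuous approximation controlled by the capacity $c$) is correct but heavier than necessary under the paper's definitions. Here $L_{G}^{1}(\Omega _{T})$ is \emph{defined} as the completion of $Lip(\Omega _{T})$ under $\left\Vert \cdot \right\Vert _{L_{G}^{1}}=\mathbb{\hat{E}}[|\cdot |]=\sup_{P\in \mathcal{P}}E_{P}[|\cdot |]$, so for every $\varepsilon >0$ there is a bounded Lipschitz $\varphi $ with $\sup_{P\in \mathcal{P}}E_{P}[|X_{m}-\varphi |]<\varepsilon $; this uniform-in-$P$ approximation, combined with $E_{P_{n_{k}}}[\varphi ]\rightarrow E_{P^{\ast }}[\varphi ]$ from weak convergence, already yields $\limsup_{k}E_{P_{n_{k}}}[X_{m}]\leq E_{P^{\ast }}[X_{m}]+2\varepsilon $. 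Your longer route is what one needs when $L_{G}^{1}$ is described intrinsically through quasi-continuity and uniform integrability (as in the characterization theorems of the cited reference), so it is not wrong, merely a detour given the completion definition used in this paper.
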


\subsection{Backward Stochastic differential equations driven by $G$%
-Brownian motion}

\noindent

In the following, we review some important results and estimates for $G$%
-BSDEs. Consider the following $G$-BSDE for $d=1$: 
\begin{equation}
Y_{t}=\xi
+\int\nolimits_{t}^{T}f(s,Y_{s},Z_{s})ds-\int%
\nolimits_{t}^{T}Z_{s}dB_{s}-(K_{T}-K_{t}),  \label{1}
\end{equation}%
where the generator 
\begin{equation*}
f(t,\omega ,y,z):[0,T]\times \Omega _{T}\times \mathbb{R}^{2}\rightarrow 
\mathbb{R}
\end{equation*}%
satisfies the assumptions (A1)-(A2).

(A1) For any $y,z,$ $f(\cdot ,\cdot ,y,z)\in M_{G}^{\beta }(0,T)$ with $%
\beta >1;$

(A2) $|f(t,\omega ,y_{1},z_{1})-f(t,\omega ,y_{2},z_{2})|\leq
L(|y_{1}-y_{2}|+|z_{1}-z_{2}|)$ for some $L>1$ and $y_{i},z_{i}\in \mathbb{R}%
,i=1,2.$

For $p\geq 1,$ denote by $\mathfrak{S}_{G}^{p}(0,T)$ the set of the triplet
of processes $(Y,Z,K)$ such that $Y\in S_{G}^{p}(0,T),$ $Z\in H_{G}^{p}(0,T)$
and $K$ is a decreasing $G$-martingale with $K_{0}=0,$ $K_{T}\in
L_{G}^{p}(\Omega _{T}).$ In \cite{Hu2014a,Hu2014b,Hu2022degenerate}, Hu et
al. proved the existence and uniqueness of solution to the $G$-BSDE (\ref{1}%
), along with the comparison theorem. We now present some of their main
results.

\begin{proposition}[\negthinspace \negthinspace \protect\cite%
{Hu2014a,Hu2022degenerate}]
\label{Dguji1}Suppose that $\xi \in L_{G}^{\beta }(\Omega _{T})$ and $f$
satisfies (A1)-(A2) for some $\beta >1$. Let $(Y,Z,K)\in \mathfrak{S}%
_{G}^{\alpha }(0,T)$ be the solution of $G$-BSDE (\ref{1}) for any $1<\alpha
<\beta $. Then there exists a positive constant $C$ depending on $\alpha
,T,L,\underline{\sigma }$ such that 
\begin{eqnarray*}
\left\vert Y_{t}\right\vert ^{\alpha } &\leq &C\mathbb{\hat{E}}_{t}\left[
\left\vert \xi \right\vert ^{\alpha }+\left( \int_{t}^{T}\left\vert
f_{s}^{0}\right\vert ds\right) ^{\alpha }\right] , \\
\mathbb{\hat{E}}\left[ \left( \int_{0}^{T}\left\vert Z_{s}\right\vert
^{2}ds\right) ^{\alpha /2}\right] &\leq &C\left\{ \left\Vert Y\right\Vert
_{S_{G}^{\alpha }}^{\alpha }+\left\Vert Y\right\Vert _{_{S_{G}^{\alpha }}}^{%
\frac{\alpha }{2}}\times \left\Vert \int_{0}^{T}\left\vert
f_{s}^{0}\right\vert ds\right\Vert _{L_{G}^{\alpha }}^{\frac{\alpha }{2}%
}\right\} , \\
\mathbb{\hat{E}}\left[ \left\vert K_{T}\right\vert ^{\alpha }\right] &\leq
&C\left\{ \left\Vert Y\right\Vert _{S_{G}^{\alpha }}^{\alpha }+\left\Vert
\int_{0}^{T}\left\vert f_{s}^{0}\right\vert ds\right\Vert _{L_{G}^{\alpha
}}^{\alpha }\right\} ,
\end{eqnarray*}%
where $\left\vert f_{s}^{0}\right\vert =\left\vert f(s,0,0)\right\vert .$
\end{proposition}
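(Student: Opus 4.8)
The plan is to prove the three estimates in turn, in each case reducing to the corresponding classical $L^{\alpha}$-estimate for BSDEs while accounting for the two features peculiar to the $G$-setting: the non-degeneracy $\underline{\sigma}^{2}dt\leq d\langle B\rangle_{t}\leq\overline{\sigma}^{2}dt$ and the decreasing $G$-martingale $K$. The preliminary step, used throughout, is to linearize the generator: by the Lipschitz condition (A2) there exist $\mathbb{F}$-adapted processes $a,b$ with $|a_{s}|,|b_{s}|\leq L$ such that $f(s,Y_{s},Z_{s})=f_{s}^{0}+a_{s}Y_{s}+b_{s}Z_{s}$ q.s., which turns \eqref{1} into a linear $G$-BSDE whose structure we exploit.

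For the pointwise bound on $Y$, I would first remove the $Z$-linear drift by a $G$-Girsanov transformation. Writing $d\langle B\rangle_{s}=\gamma_{s}\,ds$ with $\gamma_{s}\in[\underline{\sigma}^{2},\overline{\sigma}^{2}]$, the drift $b_{s}Z_{s}\,ds$ equals $\tilde{b}_{s}Z_{s}\,d\langle B\rangle_{s}$ with $\tilde{b}_{s}:=b_{s}\gamma_{s}^{-1}$ bounded, so that upon setting $\tilde{B}_{s}:=B_{s}-\int_{0}^{s}\tilde{b}_{r}\,d\langle B\rangle_{r}$ the process $\tilde{B}$ is a $G$-Brownian motion under a new sublinear expectation $\tilde{\mathbb{E}}$ from which the bound can be transferred back to $\mathbb{\hat{E}}$ by H\"older's inequality, and $-\int_{t}^{T}Z_{s}\,dB_{s}+\int_{t}^{T}b_{s}Z_{s}\,ds=-\int_{t}^{T}Z_{s}\,d\tilde{B}_{s}$ becomes a symmetric $\tilde{\mathbb{E}}$-martingale increment. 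Taking conditional $\tilde{\mathbb{E}}_{t}$, this increment drops out and only the bounded $Y$-linear drift $a_{s}Y_{s}\,ds$ remains coupled, which I close by a Gronwall argument. The main obstacle is the decreasing $G$-martingale $K$: under a naive expansion of $|Y|^{\alpha}$ the term it generates has indefinite sign, so I would handle it through the representation of the $G$-BSDE solution as an essential supremum of classical conditional expectations, under which the $K$-term is absorbed and the classical $L^{\alpha}$-estimate applies uniformly over $\mathcal{P}$, yielding $|Y_{t}|^{\alpha}\leq C\,\mathbb{\hat{E}}_{t}[\,|\xi|^{\alpha}+(\int_{t}^{T}|f_{s}^{0}|\,ds)^{\alpha}\,]$.

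For the estimate on $Z$, I would apply the $G$-It\^{o} formula to $|Y_{s}|^{2}$ on $[0,T]$, which produces $\int_{0}^{T}Z_{s}^{2}\,d\langle B\rangle_{s}$ together with $|\xi|^{2}$, the drift term $2\int_{0}^{T}Y_{s}f_{s}\,ds$, the symmetric martingale $2\int_{0}^{T}Y_{s}Z_{s}\,dB_{s}$, and $2\int_{0}^{T}Y_{s}\,dK_{s}$. Non-degeneracy gives $\underline{\sigma}^{2}\int_{0}^{T}Z_{s}^{2}\,ds\leq\int_{0}^{T}Z_{s}^{2}\,d\langle B\rangle_{s}$; the $dK$-term is bounded by $2\sup_{s}|Y_{s}|\,(-K_{T})$, and expressing $-K_{T}$ from \eqref{1} at $t=0$ lets its stochastic-integral part be reabsorbed into the $Z$-energy with a small coefficient via Young's inequality, while (A2) disposes of the $f$-term likewise. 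Taking $\mathbb{\hat{E}}$, raising to the power $\alpha/2$, and bounding the martingale term by the $G$-BDG inequality \eqref{BDG} gives the stated inequality, the first estimate being used to express everything through $\|Y\|_{S_{G}^{\alpha}}$ and $\|\int_{0}^{T}|f_{s}^{0}|\,ds\|_{L_{G}^{\alpha}}$. For $1<\alpha<2$ one regularizes with $(|Y|^{2}+\varepsilon)^{\alpha/2}$ and lets $\varepsilon\downarrow0$.

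Finally, the bound on $K$ follows by solving \eqref{1} at $t=0$ for $K_{T}=\xi-Y_{0}+\int_{0}^{T}f(s,Y_{s},Z_{s})\,ds-\int_{0}^{T}Z_{s}\,dB_{s}$ and estimating each term in $L_{G}^{\alpha}$: $\xi$ and $Y_{0}$ by $\|Y\|_{S_{G}^{\alpha}}$, the generator by (A2) together with the already-proved $Z$-estimate, and the stochastic integral by \eqref{BDG}. Collecting the constants, all depending only on $\alpha,T,L,\underline{\sigma}$, yields the third inequality. The hard part of the whole proposition is the $Y$-estimate, precisely because the decreasing $G$-martingale $K$ has no classical analogue and blocks a direct energy estimate, forcing one to pass through the $G$-BSDE representation.
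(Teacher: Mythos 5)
This proposition is not proved in the paper at all: it is imported verbatim from \cite{Hu2014a,Hu2022degenerate}, so your attempt can only be judged against the proofs in those references. Your treatment of the second and third inequalities is essentially the standard argument and essentially theirs: It\^{o}'s formula on $|Y_s|^2$, the crude bound $|\int_0^T Y_s\,dK_s|\leq \sup_s|Y_s|\cdot|K_T|$, recovering $K_T$ from the equation (\ref{1}), then Young, BDG (\ref{BDG}) and absorption. That part is correct.

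The gaps are in the $Y$-estimate, which you rightly identify as the hard part. (i) The $G$-Girsanov step is not available as you describe it: the linearization coefficients $a_s,b_s$ (hence $\tilde b_s=b_s\gamma_s^{-1}$) are bounded and adapted, but they are quotients of increments of $f$ along $(Y,Z)$ and are in general \emph{not} quasi-continuous, i.e.\ they do not belong to the $M_G$-type spaces on which the Girsanov transformation for $G$-Brownian motion and the transformed sublinear expectation $\tilde{\mathbb{E}}$ are constructed; bounded Borel functions of elements of these spaces need not remain in them. This is a well-known obstruction in the $G$-framework --- it is precisely why the comparison theorem in \cite{Hu2014b} is \emph{not} proved by the classical linearization argument. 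Even granting the transformation, you would still have to show that $K$ remains a decreasing $G$-martingale under $\tilde{\mathbb{E}}$, which you do not address. (ii) You then delegate the $dK$-difficulty to the representation of the $G$-BSDE \emph{solution} as an essential supremum of classical BSDE solutions. That theorem is heavier than, and historically downstream of, the very estimates you are asked to prove (its proofs presuppose the well-posedness theory built on them), so invoking it here is at best a questionable import and at worst circular. Moreover, your plan to transfer the resulting bound back to $\hat{\mathbb{E}}$ ``by H\"{o}lder's inequality'' through the density of the measure change would degrade the exponents and would not yield the inequality in the stated form $|Y_t|^{\alpha}\leq C\hat{\mathbb{E}}_t[|\xi|^{\alpha}+(\int_t^T|f_s^0|ds)^{\alpha}]$ with $C=C(\alpha,T,L,\underline{\sigma})$.

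For contrast, the cited proofs use neither linearization nor any measure change. One applies It\^{o}'s formula to $(|Y_s|^2+\varepsilon)^{\alpha/2}e^{rs}$, absorbs the $L|Z_s|$ contribution \emph{pathwise} into the quadratic-variation term $\tfrac{\alpha(\alpha-1)}{2}|Y_s|^{\alpha-2}Z_s^2\,d\langle B\rangle_s$ via Young's inequality together with $d\langle B\rangle_s\geq\underline{\sigma}^2ds$ (this is exactly where $\underline{\sigma}$ enters the constant), and splits the $dK$-integral according to the sign of $Y_s$. The part carrying the favourable sign is discarded pathwise; the unfavourable part $\tilde K_t=\int_0^t\alpha(Y_s^+)^{\alpha-1}e^{rs}\,dK_s$ is controlled by two facts: first, for a nonnegative integrand $\eta$, $\int_0^{\cdot}\eta_s\,dK_s$ is again a decreasing $G$-martingale (this follows from positive homogeneity of $\hat{\mathbb{E}}_t$ and is a key lemma in \cite{Hu2014a}); second, by the representation of the conditional $G$-expectation over the family $\mathcal{P}$ (\cite{Soner2011}), the identity $\hat{\mathbb{E}}_t[\tilde K_T-\tilde K_t]=0$ allows one, for each $P$ and $\varepsilon>0$, to select a measure agreeing with $P$ on $\Omega_t$ under which the classical conditional expectation of $-(\tilde K_T-\tilde K_t)$ is at most $\varepsilon$; taking classical conditional expectations of the pathwise inequality under such measures and letting $\varepsilon\downarrow 0$ removes the bad term and gives the stated conditional bound quasi-surely. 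So the correct mechanism does use a ``work under each $P\in\mathcal{P}$'' idea related to yours, but through the foundational representation of $\hat{\mathbb{E}}_t$ itself combined with It\^{o}'s formula, not through the solution-representation theorem, and with the $Z$-term handled by nondegeneracy rather than by Girsanov.
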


\begin{proposition}[\negthinspace \negthinspace \protect\cite%
{Hu2014a,Hu2022degenerate}]
\label{Dguji2}Suppose that $\xi ^{i}\in L_{G}^{\beta }(\Omega _{T})$ and $%
f^{i},$ $i=1,2$ satisfy (A1)-(A2) for some $\beta >1$. For each $1<\alpha
<\beta ,$ let $(Y^{i},Z^{i},K^{i})\in \mathfrak{S}_{G}^{\alpha }(0,T)$ be
the solution of $G$-BSDE 
\begin{equation}
Y_{t}^{i}=\xi
^{i}+\int\nolimits_{t}^{T}f^{i}(s,Y_{s}^{i},Z_{s}^{i})ds-\int%
\nolimits_{t}^{T}Z_{s}^{i}dB_{s}-(K_{T}^{i}-K_{t}^{i}).  \label{2}
\end{equation}%
Set $\hat{\xi}=\xi ^{1}-\xi ^{2},$ $\hat{Y}_{t}=Y_{t}^{1}-Y_{t}^{2},$ $\hat{Z%
}_{t}=Z_{t}^{1}-Z_{t}^{2}$ and $\hat{K}_{t}=K_{t}^{1}-K_{t}^{2}$. Then there
exists a positive constant $C$ depending on $\alpha ,T,L,\underline{\sigma }$
such that 
\begin{equation*}
|\hat{Y}_{t}|^{\alpha }\leq C\mathbb{\hat{E}}_{t}\left[ |\hat{\xi}|^{\alpha
}+\left( \int_{t}^{T}|\hat{f}_{s}|ds\right) ^{\alpha }\right] ,
\end{equation*}%
\begin{equation*}
\mathbb{\hat{E}}\left[ \left( \int_{0}^{T}|\hat{Z}_{s}|^{2}ds\right)
^{\alpha /2}\right] \leq C\left\{ ||\hat{Y}||_{S_{G}^{\alpha }}^{\alpha }+||%
\hat{Y}||_{_{S_{G}^{\alpha }}}^{\frac{\alpha }{2}}\times \underset{i=1}{%
\overset{2}{\sum }}\left[ ||Y^{i}||_{_{S_{G}^{\alpha }}}^{\frac{\alpha }{2}%
}+\left\Vert \int_{0}^{T}|f_{s}^{i,0}|ds\right\Vert _{L_{G}^{\alpha }}^{%
\frac{\alpha }{2}}\,\right] \right\} ,
\end{equation*}%
where $|\hat{f}_{s}|=\left\vert
f^{1}(s,Y_{s}^{2},Z_{s}^{2})-f^{2}(s,Y_{s}^{2},Z_{s}^{2})\right\vert $ and $%
|f_{s}^{i,0}|=\left\vert f^{i}(s,0,0)\right\vert .$
\end{proposition}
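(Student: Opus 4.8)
The plan is to treat the difference $\hat{Y}=Y^{1}-Y^{2}$ by mirroring the two-part structure of Proposition \ref{Dguji1}, while being careful that $\hat{K}=K^{1}-K^{2}$ is no longer a decreasing $G$-martingale and hence cannot be fed into Proposition \ref{Dguji1} directly. First I would subtract the two copies of (\ref{2}) and linearise the generator: by (A2) there exist bounded adapted processes $a,b$ with $|a_{s}|,|b_{s}|\leq L$ such that $f^{1}(s,Y_{s}^{1},Z_{s}^{1})-f^{1}(s,Y_{s}^{2},Z_{s}^{2})=a_{s}\hat{Y}_{s}+b_{s}\hat{Z}_{s}$, so that
\begin{equation*}
\hat{Y}_{t}=\hat{\xi}+\int_{t}^{T}\left( a_{s}\hat{Y}_{s}+b_{s}\hat{Z}_{s}+\hat{f}_{s}\right) ds-\int_{t}^{T}\hat{Z}_{s}dB_{s}-(\hat{K}_{T}-\hat{K}_{t}),
\end{equation*}
where $\hat{f}_{s}=f^{1}(s,Y_{s}^{2},Z_{s}^{2})-f^{2}(s,Y_{s}^{2},Z_{s}^{2})$.

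For the bound on $\hat{Y}$ I would deliberately avoid applying It\^{o}'s formula to this difference equation, since that would reintroduce the badly signed $d\hat{K}$ terms; instead I use a conditional representation. Taking $\mathbb{\hat{E}}_{t}$ in each equation (\ref{2}) \emph{separately}, the symmetric $G$-martingale $\int_{t}^{T}Z^{i}dB$ and the decreasing $G$-martingale part both have vanishing conditional $G$-expectation, so $Y_{t}^{i}=\mathbb{\hat{E}}_{t}[\xi ^{i}+\int_{t}^{T}f^{i}(s,Y_{s}^{i},Z_{s}^{i})ds]$; crucially $\hat{K}$ never appears. Sub-additivity of $\mathbb{\hat{E}}_{t}$ then yields $|\hat{Y}_{t}|\leq \mathbb{\hat{E}}_{t}[|\hat{\xi}|+\int_{t}^{T}(L|\hat{Y}_{s}|+L|\hat{Z}_{s}|+|\hat{f}_{s}|)ds]$. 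The remaining difficulty, which I expect to be the main obstacle, is removing the $|\hat{Z}_{s}|$ term so that only $\hat{\xi}$ and $\hat{f}$ survive. I plan to handle it via the $G$-Girsanov transformation of \cite{Hu2014b}: absorbing $b_{s}\hat{Z}_{s}ds-\hat{Z}_{s}dB_{s}$ into a stochastic integral under an equivalent sublinear expectation makes the $\hat{Z}$-contribution vanish under the associated conditional expectation, leaving only $\hat{\xi}$, $\hat{f}$ and a term linear in $\hat{Y}$; a Gronwall argument together with Jensen's inequality for the convex map $x\mapsto x^{\alpha }$ then gives the first inequality.

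For the bound on $\hat{Z}$ I would apply the $G$-It\^{o} formula to $|\hat{Y}_{t}|^{\alpha }$ (for $1<\alpha <2$, to $(|\hat{Y}_{t}|^{2}+\varepsilon )^{\alpha /2}$ and let $\varepsilon \downarrow 0$), producing the term $\tfrac{\alpha (\alpha -1)}{2}\int_{0}^{T}|\hat{Y}_{s}|^{\alpha -2}|\hat{Z}_{s}|^{2}d\langle B\rangle _{s}$, which I isolate on the left-hand side. Here the $d\hat{K}$ term, though badly signed, causes no trouble because I only need an upper bound on the right: one estimates $\int_{0}^{T}|\hat{Y}_{s}|^{\alpha -1}|d\hat{K}_{s}|\leq \sup_{t}|\hat{Y}_{t}|^{\alpha -1}(|K_{T}^{1}|+|K_{T}^{2}|)$ using $-dK_{s}^{i}\geq 0$ and $K_{0}^{i}=0$. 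The stochastic-integral term is controlled by the BDG inequality (\ref{BDG}) and the generator term by (A2). Passing to $\mathbb{\hat{E}}$, converting $\int |\hat{Z}|^{2}ds$ to the $d\langle B\rangle$-weighted integral via $d\langle B\rangle _{s}\geq \underline{\sigma }^{2}ds$, and splitting the resulting cross terms by H\"{o}lder and Young inequalities produces the product structure $\|\hat{Y}\|_{S_{G}^{\alpha }}^{\alpha /2}\times (\cdots )^{\alpha /2}$. It is precisely at this point that Proposition \ref{Dguji1} enters: the bounds $\mathbb{\hat{E}}[|K_{T}^{i}|^{\alpha }]\leq C\{\|Y^{i}\|_{S_{G}^{\alpha }}^{\alpha }+\|\int_{0}^{T}|f_{s}^{i,0}|ds\|_{L_{G}^{\alpha }}^{\alpha }\}$ supply exactly the terms $\|Y^{i}\|_{S_{G}^{\alpha }}^{\alpha /2}$ and $\|\int_{0}^{T}|f_{s}^{i,0}|ds\|_{L_{G}^{\alpha }}^{\alpha /2}$ on the right-hand side, reflecting that, unlike in Proposition \ref{Dguji1}, the difference equation must pay separately for both $K^{1}$ and $K^{2}$.
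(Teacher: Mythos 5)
Note first that this paper does not prove Proposition \ref{Dguji2}: it is quoted from \cite{Hu2014a,Hu2022degenerate}, where both inequalities are obtained by applying It\^{o}'s formula to the difference equation and exploiting fine properties of decreasing $G$-martingales. Measured against that, your proof of the \emph{first} inequality has a genuine gap, located exactly where you yourself predicted the main obstacle: the elimination of $|\hat{Z}_s|$. Your step (i) is fine as a statement --- $Y_t^i=\mathbb{\hat{E}}_t\left[ \xi ^i+\int_t^T f^i(s,Y_s^i,Z_s^i)ds\right]$ does hold --- but the correct justification is that $\int_0^{\cdot }Z_s^i dB_s+K_{\cdot }^i$ is a $G$-martingale (for instance because $\int Z^i dB$ is a martingale under every $P\in \mathcal{P}$, so only $\mathbb{\hat{E}}_t[K_T^i-K_t^i]=0$ is needed); it is \emph{not} true that the decreasing part alone has vanishing conditional expectation, since in general $\mathbb{\hat{E}}_t[-(K_T^i-K_t^i)]>0$. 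The real problem is step (ii). After step (i) you hold an inequality in which $|\hat{Z}_s|$ sits inside $\mathbb{\hat{E}}_t$ with no attached stochastic integral, so there is no equation left for a Girsanov transformation to act on; if you instead return to the linearized difference equation, where $b_s\hat{Z}_sds-\hat{Z}_sdB_s$ does have the needed structure, then $\hat{K}=K^1-K^2$ reappears, and it is not a decreasing $G$-martingale --- precisely the obstruction this paper emphasizes in its introduction. Moreover, the transformation of \cite{Hu2014b} is built for drifts of the form $b_sd\langle B\rangle _s$, not $b_sds$ (under volatility uncertainty these are inequivalent); the difference-quotient coefficients $a_s,b_s$ need not be quasi-continuous, hence need not lie in $M_G^{\beta }(0,T)$ as required to define the auxiliary transformed expectation; and even granting a transformed expectation $\mathbb{\tilde{E}}$, the processes $K^1,K^2$ have no reason to remain decreasing $\mathbb{\tilde{E}}$-martingales, so the representation of step (i) is unavailable under $\mathbb{\tilde{E}}$. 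The cited proofs avoid all of this: the $L|\hat{Z}_s|$ coming from (A2) is absorbed, via Young's inequality and the nondegeneracy $d\langle B\rangle _s\geq \underline{\sigma }^2ds$, into the second-order term $\frac{\alpha (\alpha -1)}{2}(|\hat{Y}_s|^2+\varepsilon )^{(\alpha -2)/2}|\hat{Z}_s|^2d\langle B\rangle _s$ generated by It\^{o}'s formula --- which is also why the constant $C$ in the statement depends on $\underline{\sigma }$, a dependence your route would not produce.

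Your \emph{second} inequality is essentially the correct argument and matches the cited proofs: It\^{o}'s formula, the bound $\int_0^T|\hat{Y}_s|^{\alpha -1}|d\hat{K}_s|\leq \sup_t|\hat{Y}_t|^{\alpha -1}(|K_T^1|+|K_T^2|)$ using that each $K^i$ is decreasing with $K_0^i=0$, the BDG inequality (\ref{BDG}), H\"{o}lder and Young, and then Proposition \ref{Dguji1} to bound $\mathbb{\hat{E}}[|K_T^i|^{\alpha }]$, which is indeed the sole source of the terms $\Vert Y^i\Vert _{S_G^{\alpha }}^{\alpha /2}+\Vert \int_0^T|f_s^{i,0}|ds\Vert _{L_G^{\alpha }}^{\alpha /2}$ and the reason the sum over $i=1,2$ appears. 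One bookkeeping caution: with It\^{o} applied to $|\hat{Y}|^{\alpha }$, H\"{o}lder on the $K$-term yields the split $\Vert \hat{Y}\Vert _{S_G^{\alpha }}^{\alpha -1}\times (\cdots )$, not the stated $\Vert \hat{Y}\Vert _{S_G^{\alpha }}^{\alpha /2}\times (\cdots )^{\alpha /2}$; to land on the stated form, apply It\^{o} to $|\hat{Y}_t|^2$, raise the resulting pathwise inequality to the power $\alpha /2$, and only then take $\mathbb{\hat{E}}$ and apply H\"{o}lder --- this also spares you the singular weight $|\hat{Y}|^{\alpha -2}$ and the $\varepsilon $-regularization.
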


\begin{theorem}[\negthinspace \negthinspace \protect\cite%
{Hu2014a,Hu2022degenerate}]
\label{Djie}Suppose that $\xi \in L_{G}^{\beta }(\Omega _{T})$ and $f$
satisfies (A1)-(A2) for some $\beta >1$. Then $G$-BSDE (\ref{1}) admits a
unique solution $(Y,Z,K)\in \mathfrak{S}_{G}^{\alpha }(0,T)$ for each $%
1<\alpha <\beta .$
\end{theorem}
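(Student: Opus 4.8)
The plan is to read uniqueness off the stability estimate already available and to obtain existence by a contraction/Picard scheme on short intervals that are afterwards glued by backward induction. For uniqueness, let $(Y^1,Z^1,K^1)$ and $(Y^2,Z^2,K^2)$ be two solutions of \eqref{1} in $\mathfrak{S}_G^\alpha(0,T)$ with the same data $(\xi,f)$. Applying Proposition \ref{Dguji2} with $f^1=f^2=f$ and $\xi^1=\xi^2=\xi$ makes $\hat\xi=0$ and $|\hat f_s|=|f(s,Y^2_s,Z^2_s)-f(s,Y^2_s,Z^2_s)|=0$, so the first estimate forces $\hat Y\equiv 0$, the second forces $\hat Z=0$ in $H_G^\alpha$, and substituting back into \eqref{1} gives $\hat K=0$. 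Hence the solution is unique once existence is established.

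For existence I would first solve the equation with a frozen generator. Given $h\in M_G^\beta(0,T)$, the process $M_t:=\hat{\mathbb{E}}_t[\xi+\int_0^T h(s)\,ds]$ is a symmetric $G$-martingale with terminal value in $L_G^\beta(\Omega_T)$, so the $G$-martingale representation theorem furnishes $Z\in H_G^\alpha(0,T)$ and a non-increasing $G$-martingale $K$ with $K_0=0$, $K_T\in L_G^\alpha(\Omega_T)$ such that $M_t=M_0+\int_0^t Z_s\,dB_s+K_t$. Putting $Y_t:=M_t-\int_0^t h(s)\,ds=\hat{\mathbb{E}}_t[\xi+\int_t^T h(s)\,ds]$ and rearranging shows that $(Y,Z,K)$ solves \eqref{1} with generator $h$, and Proposition \ref{Dguji1} places it in $\mathfrak{S}_G^\alpha(0,T)$.

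I would then fix a short horizon $[T-\delta,T]$ and define the Picard map $\Gamma(y,z):=(Y,Z)$, where $(Y,Z,K)$ is the frozen-case solution built from $h(s):=f(s,y_s,z_s)$. For two inputs, (A2) gives $|\hat f_s|\le L(|\hat y_s|+|\hat z_s|)$, and bounding $\int_{T-\delta}^T|\hat f_s|\,ds$ by Hölder in time produces positive powers of $\delta$ in front of $\|\hat y\|_{S_G^\alpha}$ and $\|\hat z\|_{H_G^\alpha}$, while the accompanying $\hat Z$-estimate of Proposition \ref{Dguji2} (which rests on \eqref{BDG}) is controlled through $\|\hat Y\|_{S_G^\alpha}$. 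Hence for $\delta$ small, depending only on $\alpha,T,L,\underline\sigma$, the map $\Gamma$ contracts in the product norm of $S_G^\alpha\times H_G^\alpha$, and its fixed point solves \eqref{1} on $[T-\delta,T]$ with terminal value $\xi$. To relaunch backward without losing integrability I would invoke the pointwise bound $|Y_t|^\alpha\le C\hat{\mathbb{E}}_t[|\xi|^\alpha+(\int_t^T|f^0_s|\,ds)^\alpha]$ of Proposition \ref{Dguji1}: the bracket has a finite $(\beta/\alpha)$-moment, so Doob's maximal inequality for the symmetric $G$-martingale on its right-hand side upgrades $Y$ to $S_G^\beta(T-\delta,T)$, whence $Y_{T-\delta}\in L_G^\beta(\Omega_{T-\delta})$ serves as a terminal value of the same integrability class on $[T-2\delta,T-\delta]$. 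Since $\delta$ is uniform, finitely many steps cover $[0,T]$, and concatenating the local pieces yields a global solution in $\mathfrak{S}_G^\alpha(0,T)$.

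Two features make this substantially harder than for classical BSDEs. First, the frozen step is produced by the nonlinear $G$-martingale representation, whose non-increasing component $K$ has no classical analogue; and since $\hat{\mathbb{E}}$ is only sublinear, the difference of two solutions satisfies no linear equation, so every estimate must be channelled through Proposition \ref{Dguji2} and the $G$-BDG inequality \eqref{BDG} rather than obtained by subtracting equations and taking expectations. Second, and this is the step I expect to be the main obstacle, the representation returns $(Z,K)$ only at exponents $\alpha$ strictly below the data exponent $\beta$, so one must verify that the Picard iteration on each interval and the backward gluing together keep the construction inside a single target space $\mathfrak{S}_G^\alpha(0,T)$ without the working integrability exponent drifting downward. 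The persistence of the $Y$-component in $S_G^\beta$, obtained above from Proposition \ref{Dguji1} and the $G$-Doob inequality, is what keeps the relaunched terminal data in $L_G^\beta$ across intervals; reconciling this with the representation's integrability loss inside each Picard loop is the crux of the argument.
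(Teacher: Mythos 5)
This theorem is not proved in the paper at all: it is imported verbatim from \cite{Hu2014a,Hu2022degenerate}, where existence is obtained by a route entirely different from yours --- the equation is first solved for cylindrical, Markovian-type data $\xi=\varphi(B_{t_1},\dots,B_{t_N})$ through the nonlinear Feynman--Kac correspondence with a fully nonlinear parabolic PDE (Krylov's regularity theory, using the non-degeneracy of $G$), and general $(\xi,f)$ are then handled by approximation, the a priori estimates of Propositions \ref{Dguji1}--\ref{Dguji2} providing convergence. Your uniqueness argument is correct and standard, and your frozen-generator step is legitimate (though $M_t=\hat{\mathbb{E}}_t[\xi+\int_0^T h(s)ds]$ is a $G$-martingale, not a \emph{symmetric} one; were it symmetric you would get $K\equiv 0$). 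The genuine gap sits exactly where you place ``the crux'': the Picard map is not a contraction of any fixed space $S_G^\alpha\times H_G^\alpha$, and cannot be made one by shrinking $\delta$. The $\hat Z$-difference estimate of Proposition \ref{Dguji2} is driven by the sup-in-time norm $\|\hat Y\|_{S_G^\alpha}$, while the only available bound on $\hat Y$ is pointwise, $|\hat Y_t|^\alpha\leq C\hat{\mathbb{E}}_t\left[\left(\int_t^T|\hat f_s|ds\right)^\alpha\right]$; converting this into a bound on $\|\hat Y\|_{S_G^\alpha}$ forces an application of Theorem \ref{Esup}, which consumes an exponent: the output in $S_G^\alpha\times H_G^\alpha$ is controlled only by the input difference measured in $S_G^{\alpha+\delta'}\times H_G^{\alpha+\delta'}$ with $\delta'>0$, and the constant blows up as $\delta'\to 0$. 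Thus each iteration demands more integrability than the previous one delivers (equivalently, the working exponent drifts downward), and no fixed-point theorem applies. This loss is structural, not technical: $\hat K=K^1-K^2$ is neither monotone nor a $G$-martingale, so the classical $L^2$-contraction (It\^{o} on $|\hat Y|^2$, no sup norms) fails on the term $\int\hat Y\,d\hat K$. This is precisely why \cite{Hu2014a} renounces fixed-point methods, and also why the present paper's own Picard scheme for $G$-BSVIEs iterates only in $Y$, in $M_G^\alpha$-type norms where no sup in time (hence no exponent loss) occurs, while taking all of Theorem \ref{Djie} as a black box for the $z$-dependence.

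A second, independent error is the claim that Doob's maximal inequality ``upgrades $Y$ to $S_G^\beta(T-\delta,T)$''. In the $G$-framework this is false: Theorem \ref{Esup} intrinsically loses an exponent, so one gets $Y\in S_G^\alpha$ only for $\alpha<\beta$, and in general $Y\notin S_G^\beta$ --- this is the very reason the theorem is stated for $1<\alpha<\beta$ rather than $\alpha=\beta$. What your relaunching step actually needs is weaker, namely the single-time statement $Y_{T-\delta}\in L_G^\beta(\Omega_{T-\delta})$, and that part can be rescued: from $|Y_{T-\delta}|^\alpha\leq C\hat{\mathbb{E}}_{T-\delta}[\eta]$ with $\eta=|\xi|^\alpha+\left(\int_{T-\delta}^T|f_s^0|ds\right)^\alpha\in L_G^{\beta/\alpha}(\Omega_T)$, the conditional Jensen inequality for the increasing convex map $x\mapsto x^{\beta/\alpha}$ gives $|Y_{T-\delta}|^\beta\leq C^{\beta/\alpha}\hat{\mathbb{E}}_{T-\delta}[\eta^{\beta/\alpha}]$, whose right-hand side lies in $L_G^1(\Omega_{T-\delta})$, and quasi-continuity then yields $Y_{T-\delta}\in L_G^\beta(\Omega_{T-\delta})$. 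So the backward gluing is repairable, but the short-interval contraction is not, at least not with the tools you invoke; completing a proof along your lines would require a genuinely new device for the $z$-dependent equation on a single interval, which is exactly the problem the PDE construction of \cite{Hu2014a,Hu2022degenerate} was designed to solve.
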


\begin{theorem}[\negthinspace \negthinspace \protect\cite{Hu2014b}]
\label{Dcompar}Suppose that $\xi ^{i}\in L_{G}^{\beta }(\Omega _{T})$ and $%
f^{i},$ $i=1,2$ satisfy (A1)-(A2) for some $\beta >1$. Let $%
(Y^{i},Z^{i},K^{i})\in \mathfrak{S}_{G}^{\alpha }(0,T)$ be the solution of $%
G $-BSDE (\ref{2}) corresponding to $\xi ^{i}$ and $f^{i}$ for any $1<\alpha
<\beta .$ If $\xi ^{1}\geq \xi ^{2}$ and $f^{1}\geq f^{2},$ then $Y^{1}\geq
Y^{2}.$
\end{theorem}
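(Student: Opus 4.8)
The plan is to prove the comparison result by linearizing the generator and then performing a change of measure carried out, for each $P\in\mathcal{P}$ separately, on the pathwise identity obtained from the equation; the entire difficulty will be the decreasing $G$-martingale terms. Write $\hat{Y}=Y^{1}-Y^{2}$, $\hat{Z}=Z^{1}-Z^{2}$, $\hat{K}=K^{1}-K^{2}$ and $\hat{\xi}=\xi^{1}-\xi^{2}\ge0$. Subtracting the two equations (\ref{2}) and inserting the telescoping decomposition
\[
f^{1}(s,Y^{1}_{s},Z^{1}_{s})-f^{2}(s,Y^{2}_{s},Z^{2}_{s})=\big[f^{1}(s,Y^{1}_{s},Z^{1}_{s})-f^{1}(s,Y^{2}_{s},Z^{2}_{s})\big]+\big[f^{1}(s,Y^{2}_{s},Z^{2}_{s})-f^{2}(s,Y^{2}_{s},Z^{2}_{s})\big],
\]
I would rewrite the first bracket as $a_{s}\hat{Y}_{s}+b_{s}\hat{Z}_{s}$ with adapted processes $a,b$ bounded by $L$ (difference quotients, well defined since $f^{1}$ is Lipschitz by (A2)), and set $m_{s}:=f^{1}(s,Y^{2}_{s},Z^{2}_{s})-f^{2}(s,Y^{2}_{s},Z^{2}_{s})\ge0$ by the hypothesis $f^{1}\ge f^{2}$, noting $m\in M_{G}^{\alpha}(0,T)$. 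Thus $\hat{Y}$ solves the \emph{linear} equation $\hat{Y}_{t}=\hat{\xi}+\int_{t}^{T}(a_{s}\hat{Y}_{s}+b_{s}\hat{Z}_{s}+m_{s})\,ds-\int_{t}^{T}\hat{Z}_{s}\,dB_{s}-(\hat{K}_{T}-\hat{K}_{t})$.

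Next I would introduce the integrating factor $\Gamma_{t}=\exp(\int_{0}^{t}a_{r}\,dr)$ and apply the $G$-It\^{o} formula to $\Gamma_{t}\hat{Y}_{t}$, which cancels the $a_{s}\hat{Y}_{s}$ drift and yields the pathwise identity
\[
\hat{Y}_{0}=\Gamma_{T}\hat{\xi}+\int_{0}^{T}\Gamma_{s}m_{s}\,ds-\int_{0}^{T}\Gamma_{s}\hat{Z}_{s}\,(dB_{s}-b_{s}\,ds)-\int_{0}^{T}\Gamma_{s}\,dK^{1}_{s}+\int_{0}^{T}\Gamma_{s}\,dK^{2}_{s},\quad\text{q.s.}
\]
Fixing any $P\in\mathcal{P}$, under which $dB_{s}=\sigma^{P}_{s}\,dW^{P}_{s}$ with $\underline{\sigma}\le\sigma^{P}\le\bar{\sigma}$, I would remove the $b_{s}\hat{Z}_{s}$ drift by a classical Girsanov transform $d\tilde{P}/dP=\mathcal{E}(\int b_{s}(\sigma^{P}_{s})^{-1}dW^{P})$ (Novikov holds since $|b|\le L$ and $\sigma^{P}\ge\underline{\sigma}>0$), so that $dB_{s}-b_{s}\,ds=\sigma^{P}_{s}\,dW^{\tilde{P}}_{s}$ and the corresponding stochastic integral is a $\tilde{P}$-martingale of zero mean. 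Since the identity holds $P$-a.s. and $\tilde{P}\sim P$, taking $E^{\tilde{P}}$ gives
\[
\hat{Y}_{0}=E^{\tilde{P}}\Big[\Gamma_{T}\hat{\xi}+\int_{0}^{T}\Gamma_{s}m_{s}\,ds\Big]-E^{\tilde{P}}\Big[\int_{0}^{T}\Gamma_{s}\,dK^{1}_{s}\Big]+E^{\tilde{P}}\Big[\int_{0}^{T}\Gamma_{s}\,dK^{2}_{s}\Big].
\]
The first expectation is $\ge0$ because $\hat{\xi},m\ge0$ and $\Gamma>0$; as $K^{1}$ is decreasing, the second term $-E^{\tilde{P}}[\int\Gamma\,dK^{1}]\ge0$ as well.

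The main obstacle is the last term: because $G$-expectation is sublinear, $\hat{K}$ is \emph{not} a $G$-martingale, and $\int_{0}^{T}\Gamma_{s}\,dK^{2}_{s}\le0$ carries the wrong sign, so one cannot conclude $\hat{Y}_{0}\ge0$ under a generic $P$. To resolve this I would use that the identity above holds simultaneously for \emph{every} $P\in\mathcal{P}$ and choose $P$ optimally for $K^{2}$: since $K^{2}$ is a decreasing $G$-martingale with $K^{2}_{0}=0$, the representation theorem gives $\sup_{P\in\mathcal{P}}E^{P}[K^{2}_{T}]=\hat{\mathbb{E}}[K^{2}_{T}]=K^{2}_{0}=0$, and by weak compactness of $\mathcal{P}$ this supremum is attained at some $P^{\ast}$. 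Because $K^{2}_{T}\le0$ q.s., $E^{P^{\ast}}[K^{2}_{T}]=0$ forces $K^{2}_{T}=0$ $P^{\ast}$-a.s., and monotonicity together with continuity and $K^{2}_{0}=0$ then gives $K^{2}\equiv0$ on $[0,T]$, $P^{\ast}$-a.s.; since $\tilde{P}^{\ast}\sim P^{\ast}$, the troublesome term vanishes under $\tilde{P}^{\ast}$. Plugging $P=P^{\ast}$ yields $\hat{Y}_{0}=E^{\tilde{P}^{\ast}}[\Gamma_{T}\hat{\xi}+\int_{0}^{T}\Gamma_{s}m_{s}\,ds]\ge0$. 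Finally, running the same linearization and conditional change of measure on $[t,T]$ with $\hat{\mathbb{E}}_{t}$ in place of $\hat{\mathbb{E}}$ upgrades this to $\hat{Y}_{t}\ge0$ q.s. for every $t$, i.e. $Y^{1}\ge Y^{2}$. I expect the delicate points to be the rigorous justification of the optimal-measure step (attainment of the supremum and its conditional version on $[t,T]$) and the integrability needed to make the $\tilde{P}$-stochastic integral a genuine martingale; both are handled via the weak compactness of $\mathcal{P}$ and the a priori estimates of Proposition \ref{Dguji1}.
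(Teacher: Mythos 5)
Note first that this paper does not prove Theorem \ref{Dcompar} at all: it is imported from \cite{Hu2014b} and then used as a black box (e.g.\ in Proposition \ref{compar_noY}). So your attempt can only be compared with the proof in \cite{Hu2014b}, which stays inside the sublinear framework; your per-measure Girsanov plus ``optimal measure'' selection is in any case a genuinely different, classical-probability route.

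Your argument is essentially correct at $t=0$, and the key insight is right: linearize, fix $P\in\mathcal{P}$, remove the $b_{s}\hat{Z}_{s}$ drift by a classical Girsanov transform, and then exploit that $\sup_{P\in\mathcal{P}}E^{P}[K_{T}^{2}]=\mathbb{\hat{E}}[K_{T}^{2}]=0$ is attained (weak compactness of $\mathcal{P}$ together with the continuity of $P\mapsto E_{P}[X]$ for $X\in L_{G}^{1}(\Omega_{T})$), so that $K^{2}\equiv 0$ $P^{\ast}$-a.s.; since $Y_{0}^{i}$ is deterministic, one well-chosen measure indeed suffices to conclude $\hat{Y}_{0}\geq 0$. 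The integrability points you flag (true-martingale property of the $\tilde{P}$-stochastic integral via BDG, H\"{o}lder against the exponential density, $\hat{Z}\in H_{G}^{\alpha}$) are all fillable as you indicate.

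The genuine gap is the step you compress into ``the conditional version on $[t,T]$''. For $t>0$ the claim is $\hat{Y}_{t}\geq 0$ quasi-surely, i.e.\ $P$-a.s.\ for \emph{every} $P\in\mathcal{P}$, while your selection argument, run as written, produces \emph{one} maximizing measure $Q^{\ast}$ for $E^{Q}[K_{T}^{2}-K_{t}^{2}]$ and hence yields $\hat{Y}_{t}\geq 0$ only $Q^{\ast}$-a.s.; this cannot be upgraded to a q.s.\ statement. What is actually needed is, for each fixed $P\in\mathcal{P}$, a maximizer inside the fiber $\mathcal{P}(t,P)=\{Q\in\mathcal{P}:Q=P\ \text{on}\ \mathcal{F}_{t}\}$, because only then does $\tilde{Q}^{\ast}=Q^{\ast}=P$ on $\mathcal{F}_{t}$ let the $\mathcal{F}_{t}$-measurable inequality transfer back to $P$. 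And to run that selection one must first know $\sup_{Q\in\mathcal{P}(t,P)}E^{Q}[K_{T}^{2}-K_{t}^{2}]=0$; this does \emph{not} follow from weak compactness or from the a priori estimates of Proposition \ref{Dguji1}. It requires the representation of the conditional expectation, $\mathbb{\hat{E}}_{t}[\cdot]=\operatorname{ess\,sup}_{Q\in\mathcal{P}(t,P)}E^{Q}[\,\cdot\,|\mathcal{F}_{t}]$ $P$-a.s.\ for each $P$, which rests on the stability of $\mathcal{P}$ under pasting, together with a directedness argument to exchange $E^{P}$ with the essential supremum (cf.\ \cite{Soner2011}); only after that do compactness of the fiber and continuity of $Q\mapsto E^{Q}[K_{T}^{2}]$ give the attainment. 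With these ingredients added (plus path continuity of $Y^{i}$ to pass from each fixed $t$ to all $t$ simultaneously), your proof does close; as written, however, the whole theorem beyond $t=0$ rests on a selection step whose stated justification (``weak compactness'') is insufficient.
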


In what follows, we provide an estimate that will be frequently applied
throughout this paper.

\begin{theorem}[\negthinspace \negthinspace \protect\cite{Soner2011,song}]
\label{Esup}Let $\xi \in L_{G}^{\alpha }(\Omega _{T})$ for each $\alpha \geq
1$ and $\delta >0,$ we have 
\begin{equation*}
\mathbb{\hat{E}}\left[ \underset{t\in \lbrack 0,T]}{\sup }\mathbb{\hat{E}}%
_{t}\left[ \left\vert \xi \right\vert ^{\alpha }\right] \right] \leq C\left( 
\mathbb{\hat{E}}\left[ \left\vert \xi \right\vert ^{\alpha +\delta }\right]
\right) ^{\alpha /(\alpha +\delta )},
\end{equation*}

where $C$ is a constant depending on $\alpha $ and $\delta .$
\end{theorem}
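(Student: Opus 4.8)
The plan is to reduce the claim to a maximal inequality for the conditional $G$-expectation and then prove that inequality directly through the representation theorem. Write $Y=\left\vert \xi \right\vert ^{\alpha }\geq 0$ and set $q=(\alpha +\delta )/\alpha >1$, so that $Y\in L_{G}^{q}(\Omega _{T})$ and $\mathbb{\hat{E}}[Y^{q}]=\mathbb{\hat{E}}[\left\vert \xi \right\vert ^{\alpha +\delta }]$. Since the stated bound is exactly $\mathbb{\hat{E}}[S^{\ast }]\leq C(\mathbb{\hat{E}}[Y^{q}])^{1/q}$ with $S^{\ast }:=\sup_{t\in \lbrack 0,T]}\mathbb{\hat{E}}_{t}[Y]$, it suffices to establish this $L^{1}$-versus-$L^{q}$ maximal estimate. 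The starting observation is that $M_{t}:=\mathbb{\hat{E}}_{t}[Y]$ is a nonnegative $G$-martingale, and that for every $P\in \mathcal{P}$ it is a nonnegative $P$-supermartingale: indeed $E_{P}[M_{t}\,|\,\mathcal{F}_{s}]\leq \mathbb{\hat{E}}_{s}[M_{t}]=\mathbb{\hat{E}}_{s}[Y]=M_{s}$ for $s\leq t$, using the tower property of $\mathbb{\hat{E}}$ together with $E_{P}[\cdot \,|\,\mathcal{F}_{s}]\leq \mathbb{\hat{E}}_{s}[\cdot ]$.

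Next I would prove a weak-type maximal inequality. Fixing a right-continuous modification of $M$ (available from the regularity of $G$-martingales) and applying optional sampling under each $P$ to the hitting time $\tau _{\lambda }=\inf \{t:M_{t}\geq \lambda \}$, the supermartingale property yields $\lambda P(\sup_{t}M_{t}\geq \lambda )\leq E_{P}[M_{0}]=\mathbb{\hat{E}}[Y]$, and taking the supremum over $P\in \mathcal{P}$ gives $c(S^{\ast }\geq \lambda )\leq \mathbb{\hat{E}}[Y]/\lambda $. To bring in the extra integrability I would truncate: from subadditivity and order preservation of $\mathbb{\hat{E}}_{t}$ one has $M_{t}\leq \mathbb{\hat{E}}_{t}[Y\wedge \lambda ]+\mathbb{\hat{E}}_{t}[(Y-\lambda )^{+}]\leq \lambda +\mathbb{\hat{E}}_{t}[(Y-\lambda )^{+}]$, whence $\{S^{\ast }>2\lambda \}\subseteq \{\sup_{t}\mathbb{\hat{E}}_{t}[(Y-\lambda )^{+}]>\lambda \}$ and, applying the weak-type bound to $(Y-\lambda )^{+}$, $c(S^{\ast }>2\lambda )\leq \mathbb{\hat{E}}[(Y-\lambda )^{+}]/\lambda $.

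Finally I would integrate in $\lambda $. Since $E_{P}[S^{\ast }]=\int_{0}^{\infty }P(S^{\ast }>\mu )\,d\mu $ for each $P$ and the supremum passes inside the integral in the favourable direction, $\mathbb{\hat{E}}[S^{\ast }]\leq \int_{0}^{\infty }c(S^{\ast }>\mu )\,d\mu $. Splitting at a level $\mu _{0}$, bounding $c\leq 1$ on $[0,\mu _{0}]$, and on $[\mu _{0},\infty )$ combining the truncated weak-type estimate with the H\"older inequality for the sublinear expectation, $\mathbb{\hat{E}}[Y\mathbf{1}_{\{Y>\mu /2\}}]\leq (\mathbb{\hat{E}}[Y^{q}])^{1/q}\,c(Y>\mu /2)^{1-1/q}$, and the Markov bound $c(Y>\mu /2)\leq 2^{q}\mathbb{\hat{E}}[Y^{q}]/\mu ^{q}$, produces a tail of order $\mu ^{-q}$ whose integral converges. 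Optimizing the free level at $\mu _{0}=2(\mathbb{\hat{E}}[Y^{q}])^{1/q}$ then yields $\mathbb{\hat{E}}[S^{\ast }]\leq \tfrac{2q}{q-1}(\mathbb{\hat{E}}[Y^{q}])^{1/q}$, which is the assertion with $C=2q/(q-1)$ depending only on $\alpha $ and $\delta $.

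The main obstacle is conceptual rather than computational: the $P$-supermartingale property controls $M$ in the wrong direction for an $L^{q}$-type maximal bound, since $M$ dominates, rather than is dominated by, the $P$-martingale generated by $Y$, so one cannot simply invoke Doob's $L^{q}$ inequality. The resolution is to use the weak-type $(1,1)$ inequality, which does hold for nonnegative supermartingales, and to recover the maximal function through truncation and a Marcinkiewicz-type interpolation, where the strictly positive gap $\delta $ (i.e. $q>1$) is exactly what renders the tail integral $\int \mu ^{-q}\,d\mu $ finite. The remaining technical points to verify are the existence of a right-continuous modification of $t\mapsto \mathbb{\hat{E}}_{t}[Y]$ and the validity of optional sampling simultaneously under the whole family $\mathcal{P}$, for which one relies on the quasi-sure regularity results for $G$-martingales.
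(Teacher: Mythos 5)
The first thing to note is that the paper does not prove Theorem \ref{Esup} at all: it is imported verbatim from \cite{Soner2011,song}, so there is no in-paper argument to compare against, and your proposal has to be judged against the proofs in those references. In substance it reconstructs them correctly. Your chain --- (i) $M_t=\mathbb{\hat{E}}_t[|\xi|^{\alpha}]$ is a nonnegative $P$-supermartingale for every $P\in\mathcal{P}$, via the tower property and the consistency inequality $E_P[\,\cdot\,|\mathcal{F}_s]\leq\mathbb{\hat{E}}_s[\,\cdot\,]$ $P$-a.s. (itself a nontrivial but known consequence of the essential-supremum representation of $\mathbb{\hat{E}}_s$ in \cite{Soner2011}); (ii) the weak-type $(1,1)$ maximal bound uniformly in $P$, hence in capacity; (iii) truncation $Y=Y\wedge\lambda+(Y-\lambda)^{+}$ combined with H\"older and Chebyshev under the sublinear expectation to get a tail of order $\mu^{-q}$, $q=(\alpha+\delta)/\alpha>1$; (iv) layer-cake integration and optimization at $\mu_0=2(\mathbb{\hat{E}}[Y^{q}])^{1/q}$ --- is exactly the weak-type-plus-interpolation route of \cite{song}, and your arithmetic, including the constant $C=2q/(q-1)=2(\alpha+\delta)/\delta$, checks out.

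The one genuine gap is the regularity input. To run optional sampling you need a right-continuous version of $t\mapsto\mathbb{\hat{E}}_t[Y]$ (and of $t\mapsto\mathbb{\hat{E}}_t[(Y-\lambda)^{+}]$) valid simultaneously under all $P\in\mathcal{P}$, and you defer this to ``quasi-sure regularity results for $G$-martingales''. But in the literature those regularity results are themselves derived from the maximal inequality you are proving (one approximates $\xi$ by elements of $Lip(\Omega_T)$ and uses the maximal estimate to obtain uniform-in-$t$ convergence quasi-surely), so as written the appeal is circular; moreover, without a distinguished version the quantity $\sup_{t\in[0,T]}\mathbb{\hat{E}}_t[Y]$ is not even well defined, since each $\mathbb{\hat{E}}_t[Y]$ is only an equivalence class. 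Two standard repairs: (a) prove the estimate first for $\xi\in Lip(\Omega_T)$ --- then $|\xi|^{\alpha}\in Lip(\Omega_T)$ for $\alpha\geq 1$, and $\mathbb{\hat{E}}_t[|\xi|^{\alpha}]$ has genuinely continuous paths by the Markov/PDE representation --- and extend to $L_G^{\alpha+\delta}(\Omega_T)$ by density, using the weak-type bound applied to the Lipschitz differences $\bigl||\xi_n|^{\alpha}-|\xi_m|^{\alpha}\bigr|$ to get a q.s.\ uniform limit and then Fatou under each $P$; or (b) avoid continuous-time optional sampling entirely by running your argument for $\max_{i\leq n}\mathbb{\hat{E}}_{t_i}[Y]$ over finite partitions, where the discrete supermartingale maximal inequality requires no path regularity, and pass to the countable dense supremum by monotone convergence under each $P$ before taking $\sup_{P\in\mathcal{P}}$. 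With either repair your proof is complete and yields the stated estimate.
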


\section{Backward Stochastic Volterra integral equations driven by $G$%
-Brownian motion}

\noindent

In this section, we study on the $G$-expectation space $\left( \Omega
_{T},L_{G}^{1}\left( \Omega _{T}\right) ,\mathbb{\hat{E}}\right) $ for
simplicity$,$ in which $\Omega _{T}=C\left( \left[ 0,T\right] ;\mathbb{R}%
\right) $ and $\mathbb{\hat{E}}\left[ B_{1}^{2}\right] =\bar{\sigma}^{2},$ $-%
\mathbb{\hat{E}}\left[ -B_{1}^{2}\right] =\underline{\sigma }^{2}$.
Moreover, our conclusions and the proof in this section still hold for $d>1$.

For each $a,b\in \left[ 0,T\right] $, define 
\begin{equation}
\Delta \left[ a,b\right] =\left\{ \left( t,s\right) \in \left[ a,b\right]
^{2}\text{ }|\text{ }a\leq t\leq s\leq b\right\} \text{ and }\Delta
(a,b]=\left\{ \left( t,s\right) \in \left[ a,b\right] ^{2}\text{ }|\text{ }%
a<t\leq s\leq b\right\} .  \label{triangle}
\end{equation}%
Consider the Backward Stochastic Volterra integral equations driven by $G$%
-Brownian motion of the form%
\begin{equation}
Y(t)=\phi
(t)+\int_{t}^{T}f(t,s,Y(s),Z(t,s))ds-\int_{t}^{T}Z(t,s)dB_{s}-K(t,T),
\label{G-BSVIE2}
\end{equation}%
where the generator%
\begin{equation*}
f(t,s,\omega ,y,z):\Delta \left[ 0,T\right] \times \Omega \times \mathbb{R}%
^{2}\rightarrow \mathbb{R}
\end{equation*}%
satisfies the following assumptions for some $\beta >1$:

(H1) For each $y,z\in 
\mathbb{R}
$ and $t\in \lbrack 0,T],$ $f(t,\cdot ,\cdot ,y,z)\in M_{G}^{\beta }(t,T);$

(H2) There exists a positive constant $L>1$ such that for each $\left(
y_{1},z_{1}\right) ,\left( y_{2},z_{2}\right) \in 
\mathbb{R}
^{2}$ and $\left( t,s\right) \in \Delta \left[ 0,T\right] ,$ 
\begin{equation*}
\left\vert f(t,s,y_{1},z_{1})-f(t,s,y_{2},z_{2})\right\vert \leq L\left(
\left\vert y_{1}-y_{2}\right\vert +\left\vert z_{1}-z_{2}\right\vert \right)
;
\end{equation*}

(H3) $\underset{t\in \lbrack 0,T]}{\sup }\mathbb{\hat{E}}\left[
\int_{t}^{T}\left\vert f(t,s,0,0)\right\vert ^{\beta }ds\right] <\infty $;

(H4) For each $t ,t^{\prime }\in \left[ 0,T\right] $ and each fixed $N\in 
\mathbb{N}
,$%
\begin{equation*}
\underset{t^{\prime }\rightarrow t}{\lim }\mathbb{\hat{E}}\left[ \underset{%
_{\{\left\vert y\right\vert +\left\vert z\right\vert \leq N\}}}{\sup }%
\left\vert \int_{t\vee t^{\prime }}^{T}\left\vert f(t^{\prime
},s,y,z)-f(t,s,y,z)\right\vert ds\right\vert ^{\beta }\right] =0.
\end{equation*}

The analysis for $G$-BSVIE (\ref{G-BSVIE1}) is similar to $G$-BSVIE (\ref%
{G-BSVIE2}). To simplify, we only focus on $G$-BSVIE (\ref{G-BSVIE2}) with $%
g=0.$ Noting that the term~$Z$ in Volterra-type equation~depends on~both $t$
and $s$. Consequently, it is necessary to develop new $G$-expectation spaces
on $\Delta \left[ 0,T\right] $ for this study. Herein, we present these
spaces in detail.

\begin{definition}
\label{Z_H_G}Let $\pi _{T}^{N}=\left\{ 0=u_{0}<u_{1}<\cdot \cdot \cdot
<u_{N}=T\right\} $ be the partitions of $\left[ 0,T\right] $ for each $N\in 
\mathbb{N}$. Denote by $M_{G}^{0}(\Delta \left( 0,T\right) )$ the set of
processes $\eta :\Delta \left[ 0,T\right] \times \Omega \rightarrow 
\mathbb{R}
$ such that%
\begin{equation*}
\eta (t,s)=\underset{i=0}{\overset{N-1}{\sum }}\underset{j=i}{\overset{N-1}{%
\sum }}\eta (u_{i},u_{j})I_{_{\Delta _{ij}\left[ 0,T\right] }}(t,s)\text{
and }\eta (u_{i},u_{j})\in Lip(\Omega _{u_{j}}),
\end{equation*}%
where $\Delta _{ij}\left[ 0,T\right] =\left( [u_{i},u_{i+1})\times \lbrack
u_{j},u_{j+1})\right) \cap \Delta \left[ 0,T\right] $ for each $0\leq i\leq
j\leq N.$ Let $H_{G}^{p}(\Delta \left( 0,T\right) )$ be the completion of $%
M_{G}^{0}(\Delta \left( 0,T\right) )$ under the norm $\left\Vert \eta
\right\Vert _{H_{G}^{p}}=\left( \mathbb{\hat{E}}\left[ \int_{0}^{T}\left(
\int_{t}^{T}\left\vert \eta (t,s)\right\vert ^{2}ds\right) ^{p/2}dt\right]
\right) ^{1/p}$ for each $p\geq 1.$ And we set 
\begin{equation*}
\tilde{H}_{G}^{p}(\Delta \left( 0,T\right) ):=\{X\in H_{G}^{p}(\Delta \left(
0,T\right) )\text{ }|\text{ }X(t,\cdot )\in H_{G}^{p}(t,T)\text{ for each
fixed }t\in \lbrack 0,T]\}
\end{equation*}%
and 
\begin{equation*}
\tilde{M}_{G}^{p}\left( 0,T\right) :=\{X\in M_{G}^{p}\left( 0,T\right) \text{
}|\text{ }X(t)\in L_{G}^{p}(\Omega _{t})\text{ for each fixed }t\in \lbrack
0,T]\}
\end{equation*}
\end{definition}

Let $\mathfrak{\tilde{S}}_{G}^{p}(\Delta \left( 0,T\right) )$ be the set of
the triplet of processes $(Y,Z,K)$ such that $Y\in \tilde{M}_{G}^{p}(0,T),$ $%
Z\in \tilde{H}_{G}^{p}(\Delta \left( 0,T\right) ),$ $\left( K(t,s)\right)
_{s\in \lbrack t,T]}$ is a decreasing $G$-martingale with $K(t,t)=0$ and $%
K(t,T)\in L_{G}^{p}(\Omega _{T})$ for any fixed $t\in \lbrack 0,T].$

With the relevant spaces defined above, we now introduce the definition of
continuity in these spaces.

\begin{definition}
Let $\beta >1$.\newline
(1) The process $\varphi :\left[ 0,T\right] \times \Omega \rightarrow 
\mathbb{R}
$ is said to be $L_{G}^{\beta }$-continuous if it holds that for each $t\in %
\left[ 0,T\right] ,$ $\varphi \left( t\right) \in L_{G}^{\beta }(\Omega
_{T}) $ and%
\begin{equation*}
\underset{t\rightarrow t_{0}}{\lim }\mathbb{\hat{E}}\left[ \left\vert
\varphi \left( t\right) -\varphi \left( t_{0}\right) \right\vert ^{\beta }%
\right] =0,\text{ for each }t_{0}\in \left[ 0,T\right] .
\end{equation*}%
(2) The process $\lambda :\Delta \left[ 0,T\right] \times \Omega \rightarrow 
\mathbb{R}
$ is said to be $S_{G}^{\beta }$-continuous if it holds that for each $t\in %
\left[ 0,T\right] ,$ $\lambda \left( t,\cdot \right) \in S_{G}^{\beta }(t,T)$
and 
\begin{equation*}
\underset{t\rightarrow t_{0}}{\lim }\mathbb{\hat{E}}\left[ \underset{r\in
\lbrack t\vee t_{0},T]}{\sup }\left\vert \lambda \left( t,r\right) -\lambda
\left( t_{0},r\right) \right\vert ^{\beta }\right] =0,\text{ for each }%
t_{0}\in \left[ 0,T\right] .
\end{equation*}%
(3) The process $\eta :\Delta \left[ 0,T\right] \times \Omega \rightarrow 
\mathbb{R}
$ is said to be $H_{G}^{\beta }$-continuous if it holds that for each $t\in %
\left[ 0,T\right] ,$ $\eta \left( t,\cdot \right) \in H_{G}^{\beta }(t,T)$
and%
\begin{equation*}
\underset{t\rightarrow t_{0}}{\lim }\mathbb{\hat{E}}\left[ \left(
\int_{t\vee t_{0}}^{T}\left\vert \eta \left( t,s\right) -\eta \left(
t_{0},s\right) \right\vert ^{2}ds\right) ^{\beta /2}\right] =0,\text{ for
each }t_{0}\in \left[ 0,T\right] .
\end{equation*}%
\ 
\end{definition}

Similarly, the definition of uniform continuity follows in parallel.

\begin{definition}
Let $\beta >1$.\newline
(1) The process $\varphi :\left[ 0,T\right] \times \Omega \rightarrow 
\mathbb{R}
$ is said to be $L_{G}^{\beta }$-uniformly continuous if it holds that for
each $t\in \left[ 0,T\right] ,$ $\varphi \left( t\right) \in L_{G}^{\beta
}(\Omega _{T})$ and 
\begin{equation*}
\underset{n\rightarrow \infty }{\lim }\text{ }\underset{_{\left\vert
t^{\prime }-t\right\vert <\frac{1}{n}}}{\sup }\mathbb{\hat{E}}\left[
\left\vert \varphi \left( t^{\prime }\right) -\varphi \left( t\right)
\right\vert ^{\beta }\right] =0,\text{ for each }t,t^{\prime }\in \left[ 0,T%
\right] .
\end{equation*}%
(2) The process $\lambda :\Delta \left[ 0,T\right] \times \Omega \rightarrow 
\mathbb{R}
$ is said to be $S_{G}^{\beta }$-uniformly continuous if it holds that for
each $t\in \left[ 0,T\right] ,$ $\lambda \left( t,\cdot \right) \in
S_{G}^{\beta }(t,T)$ and%
\begin{equation*}
\underset{n\rightarrow \infty }{\lim }\text{ }\underset{_{\left\vert
t^{\prime }-t\right\vert <\frac{1}{n}}}{\sup }\mathbb{\hat{E}}\left[ 
\underset{r\in \lbrack t^{\prime }\vee t,T]}{\sup }\left\vert \lambda \left(
t^{\prime },r\right) -\lambda \left( t,r\right) \right\vert ^{\beta }\right]
=0,\text{ for each }t,t^{\prime }\in \left[ 0,T\right] .
\end{equation*}%
\newline
(3) The process $\eta :\Delta \left[ 0,T\right] \times \Omega \rightarrow 
\mathbb{R}
$ is said to be $H_{G}^{\beta }$-uniformly continuous if it holds that for
each $t\in \left[ 0,T\right] ,$ $\eta \left( t,\cdot \right) \in
H_{G}^{\beta }(t,T)$ and%
\begin{equation*}
\underset{n\rightarrow \infty }{\lim }\text{ }\underset{_{\left\vert
t^{\prime }-t\right\vert <\frac{1}{n}}}{\sup }\mathbb{\hat{E}}\left[ \left(
\int_{t^{\prime }\vee t}^{T}\left\vert \eta \left( t^{\prime },s\right)
-\eta \left( t,s\right) \right\vert ^{2}ds\right) ^{\beta /2}\right] =0,%
\text{ for each }t,t^{\prime }\in \left[ 0,T\right] .
\end{equation*}
\end{definition}

\begin{definition}
Suppose that $\phi (\cdot )$ is $L_{G}^{\beta }$-continuous and $f$
satisfies (H1)-(H4) for some $\beta >1$. A triplet of processes $(Y,Z,K)$ is
said to be a solution of $G$-BSVIE (\ref{G-BSVIE2}) if the following
properties hold:

(i) $(Y,Z,K)\in \mathfrak{\tilde{S}}_{G}^{\alpha }(\Delta \left( 0,T\right)
) $ for any $1<\alpha <\beta ;$

(ii) $Y(t)=\phi
(t)+\int_{t}^{T}f(t,s,Y(s),Z(t,s))ds-\int_{t}^{T}Z(t,s)dB_{s}-K(t,T)$, for $%
t\in \left[ 0,T\right] .$
\end{definition}

\subsection{The $G$-BSVIEs with generators independent of Y}

\noindent

In order to obtain the well-posedness of $G$-BSVIE (\ref{G-BSVIE2}), we
first consider a simple version of $G$-BSVIE (\ref{G-BSVIE2}) on $[0,T]$: 
\begin{equation}
Y(t)=\phi (t)+\int_{t}^{T}f(t,s,Z(t,s))ds-\int_{t}^{T}Z(t,s)dB_{s}-K(t,T),
\label{noY}
\end{equation}%
where the generator 
\begin{equation*}
f(t,s,\omega ,z):\Delta \left[ 0,T\right] \times \Omega _{T}\times \mathbb{R}%
\rightarrow \mathbb{R}
\end{equation*}%
satisfies the assumptions (H1)-(H4) for some $\beta >1$. Since the generator 
$f$ of (\ref{noY}) is independent of $y,$ the assumptions (H1)-(H4) should
be simplified. More specially, for (H4), the supremum is taken over the set $%
\left\{ \left\vert z\right\vert \leq N\right\} .$

Then we define a family of $G$-BSDEs parameterized by each fixed $t\in
\lbrack 0,T]:$%
\begin{equation}
\lambda (t,r)=\phi (t)+\int_{r}^{T}f(t,s,\mu (t,s))ds-\int_{r}^{T}\mu
(t,s)dB_{s}-\left( \eta (t,T)-\eta (t,r)\right) ,\quad \left( t,r\right) \in
\Delta \left[ 0,T\right] ,  \label{para}
\end{equation}%
where $f$ satisfies the assumptions (H1)-(H4) for some $\beta >1.$ From
Theorem \ref{Djie} we derive that $G$-BSDEs (\ref{para}) admit a unique
solution $(\lambda (t,\cdot ),\mu (t,\cdot ),\eta (t,\cdot ))\in \mathfrak{S}%
_{G}^{\alpha }(t,T)$, $1<\alpha <\beta $ and $\eta (t,t)=0$ for each fixed $%
t\in \lbrack 0,T].$ Furthermore, the estimates in Proposition \ref{Dguji1}
and Proposition \ref{Dguji2} also hold for $G$-BSDEs (\ref{para}) for each
fixed $t\in \lbrack 0,T]$.

In order to prove $\left( \lambda (t,t)\right) _{t\in \left[ 0,T\right] }$
belongs to $M_{G}^{\alpha }(0,T)$ for each $1<\alpha <\beta ,$ we establish
the following lemmas. In what follows, we denote by $C\left( \zeta ,\theta
\right) $ a positive constant depending on parameters $\zeta ,\theta $,
which may vary from line to line.

\begin{lemma}
\label{uni_f}Let $f$ be the generator of $G$-BSVIE (\ref{G-BSVIE2}). If $f$
satisfies (H1)-(H4), then for each $t ,t^{\prime} \in \left[ 0,T\right] $
and each fixed $N\in 
\mathbb{N}
,$ 
\begin{equation}
\underset{n\rightarrow \infty }{\lim }\text{ }\underset{_{\left\vert
t^{\prime }-t\right\vert <\frac{1}{n}}}{\sup }\mathbb{\hat{E}}\Biggl[%
\underset{_{\{\left\vert y\right\vert +\left\vert z\right\vert \leq N\}}}{%
\sup }\Biggl\vert\int_{t\vee t^{\prime }}^{T}\left\vert f(t^{\prime
},s,y,z)-f(t,s,y,z)\right\vert ds\Biggr\vert^{\beta }\Biggr]=0.
\label{f_proposition}
\end{equation}%
Moreover, if $\phi (\cdot )$ is $L_{G}^{\beta }$-continuous$,$ then we have $%
\underset{t\in \left[ 0,T\right] }{\sup }\mathbb{\hat{E}}\left[ \left\vert
\phi \left( t\right) \right\vert ^{\beta }\right] <\infty $ and $\phi (\cdot
)$ is $L_{G}^{\beta }$-uniformly continuous.
\end{lemma}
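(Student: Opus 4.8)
The plan is to derive each of the two assertions from its pointwise counterpart ((H4) and the hypothesized $L_G^\beta$-continuity of $\phi$, respectively) by a Heine--Cantor--type compactness argument on the compact interval $[0,T]$, carried out in the appropriate $G$-expectation (semi)norm. For the uniform estimate (\ref{f_proposition}) I would set $F_N(t',t):=\mathbb{\hat{E}}[\sup_{|y|+|z|\le N}|\int_{t\vee t'}^{T}|f(t',s,y,z)-f(t,s,y,z)|\,ds|^{\beta}]$, noting that $F_N$ is symmetric in $(t,t')$ and that $\sup_{|t'-t|<1/n}F_N(t',t)$ is nonincreasing in $n$, so it suffices to exclude a strictly positive limit. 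Assuming the contrary, there are $\epsilon>0$ and sequences $t_n,t_n'$ with $|t_n-t_n'|<1/n$ and $F_N(t_n',t_n)\ge\epsilon$; by compactness I pass to a subsequence with $t_n\to t^{\ast}$, whence $t_n'\to t^{\ast}$ as well, and the goal becomes to prove $F_N(t_n',t_n)\to0$, yielding the contradiction.

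The core step is a triangle decomposition through $t^{\ast}$ performed on a common lower limit. Put $a_n:=t_n\vee t_n'$ and $a_n^{\ast}:=\max(t_n,t_n',t^{\ast})$, so that $a_n\le a_n^{\ast}$ and $\delta_n:=a_n^{\ast}-a_n=(t^{\ast}-a_n)^{+}\to0$. Writing $\int_{a_n}^{T}=\int_{a_n}^{a_n^{\ast}}+\int_{a_n^{\ast}}^{T}$ and applying $|f(t_n',\cdot)-f(t_n,\cdot)|\le|f(t_n',\cdot)-f(t^{\ast},\cdot)|+|f(t^{\ast},\cdot)-f(t_n,\cdot)|$ on the second piece, the inequalities $a_n^{\ast}\ge t_n'\vee t^{\ast}$ and $a_n^{\ast}\ge t_n\vee t^{\ast}$ let me enlarge the two integration domains downward, so that the resulting terms are dominated by $F_N(t_n',t^{\ast})$ and $F_N(t_n,t^{\ast})$, both tending to $0$ by (H4) with base point $t^{\ast}$. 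For the sliver $\int_{a_n}^{a_n^{\ast}}$, since $[a_n,a_n^{\ast}]\subseteq[t_n',T]\cap[t_n,T]$, the Lipschitz bound (H2) gives $|f(t_n',s,y,z)-f(t_n,s,y,z)|\le|f(t_n',s,0,0)|+|f(t_n,s,0,0)|+2LN$ uniformly over $|y|+|z|\le N$; a H\"older estimate in $ds$ then extracts a factor $\delta_n^{\beta-1}$ multiplying $\mathbb{\hat{E}}[\int_{t_n'}^{T}|f(t_n',s,0,0)|^{\beta}ds]+\mathbb{\hat{E}}[\int_{t_n}^{T}|f(t_n,s,0,0)|^{\beta}ds]+(2LN)^{\beta}\delta_n$, which is bounded via (H3); as $\beta>1$, this sliver tends to $0$. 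Combining the three pieces with $(a+b+c)^{\beta}\le 3^{\beta-1}(a^{\beta}+b^{\beta}+c^{\beta})$ gives $F_N(t_n',t_n)\to0$, the desired contradiction.

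For the statements on $\phi$, I would first note that $L_G^\beta$-continuity together with the reverse triangle inequality makes $t\mapsto\|\phi(t)\|_{L_G^\beta}=\mathbb{\hat{E}}[|\phi(t)|^{\beta}]^{1/\beta}$ continuous on the compact set $[0,T]$, hence bounded, which yields $\sup_{t\in[0,T]}\mathbb{\hat{E}}[|\phi(t)|^{\beta}]<\infty$. The uniform continuity then follows from the same compactness/contradiction scheme, but in a simpler form since no integration limit intervenes: were it to fail, sequences $t_n,t_n'$ with $|t_n-t_n'|<1/n$ and $\mathbb{\hat{E}}[|\phi(t_n')-\phi(t_n)|^{\beta}]\ge\epsilon$ would, after extracting $t_n,t_n'\to t^{\ast}$, contradict $\|\phi(t_n')-\phi(t_n)\|_{L_G^\beta}\le\|\phi(t_n')-\phi(t^{\ast})\|_{L_G^\beta}+\|\phi(t^{\ast})-\phi(t_n)\|_{L_G^\beta}\to0$.

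The genuine obstacle is the lower-limit mismatch in the first part: the integral defining $F_N(t_n',t_n)$ starts at $t_n\vee t_n'$, whereas (H4) only controls integrals starting at $t_n'\vee t^{\ast}$ and $t_n\vee t^{\ast}$. Reconciling them is exactly what forces the common level $a_n^{\ast}=\max(t_n,t_n',t^{\ast})$ and a separate treatment of the shrinking sliver $[a_n,a_n^{\ast}]$, where only the Lipschitz and integrability assumptions (H2)--(H3) together with the strict inequality $\beta>1$ make the remainder vanish.
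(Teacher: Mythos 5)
Your proof is correct, and while it uses the same core ingredients as the paper --- compactness of $[0,T]$, assumption (H4) applied at a reference point, and H\"older plus (H2)--(H3) to kill the ``sliver'' coming from the mismatch of lower integration limits --- it is organized quite differently, and in a leaner way. The paper defines $l(t,r)=\mathbb{\hat{E}}\bigl[\sup_{|y|+|z|\le N}|\int_{t\vee r}^{T}|f(t,s,y,z)-f(r,s,y,z)|\,ds|^{\beta}\bigr]$ on the whole square $[0,T]^{2}$, proves its \emph{joint continuity} at every point $(t,r)$ (this forces a Lagrange mean value theorem step to handle $|A^{\beta}-B^{\beta}|$, combined with H\"older's inequality and an a priori moment bound on the intermediate quantity $\xi$), then invokes Heine--Cantor on the compact square to get uniform continuity, and finally restricts to the diagonal where $l(t,t)=0$. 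You instead argue by contradiction: you only ever compare $F_N$ against its value $0$ on the diagonal, so after extracting $t_n,t_n'\to t^{\ast}$ you need just the three-way split $\int_{a_n}^{a_n^{\ast}}+\int_{a_n^{\ast}}^{T}$, the domain enlargements $a_n^{\ast}\ge t_n'\vee t^{\ast}$, $a_n^{\ast}\ge t_n\vee t^{\ast}$, subadditivity of $\mathbb{\hat{E}}$, and the elementary inequality $(a+b+c)^{\beta}\le 3^{\beta-1}(a^{\beta}+b^{\beta}+c^{\beta})$; no mean value theorem and no estimate of differences of nonzero values of $l$ are needed. What the paper's route buys is a stronger intermediate fact (uniform continuity of $l$ on all of $[0,T]^{2}$, not just vanishing near the diagonal); what yours buys is economy and fewer technical estimates. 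For the terminal datum, your treatment is also slightly more self-contained: the paper derives uniform continuity of $\phi$ ``similarly'' and then cites Lemma 3.4 of \cite{zhao} for $\sup_{t}\mathbb{\hat{E}}[|\phi(t)|^{\beta}]<\infty$, whereas you get boundedness directly from the continuity of $t\mapsto\|\phi(t)\|_{L_{G}^{\beta}}$ (via the reverse triangle inequality, valid since $\|\cdot\|_{L_{G}^{\beta}}$ is a genuine norm under the sublinear expectation) on the compact interval, and uniform continuity by the same subsequence argument. Both routes are valid; yours is a genuine, slightly more elementary alternative.
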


\begin{proof}
Define 
\begin{equation*}
l(t,r)=\mathbb{\hat{E}}\Biggl[\underset{\{\left\vert y\right\vert
+\left\vert z\right\vert \leq N\}}{\sup }\left\vert \int_{t\vee
r}^{T}\left\vert f(t,s,y,z)-f(r,s,y,z)\right\vert ds\right\vert ^{\beta }%
\Biggr],\text{ for each }(t,r)\in \lbrack 0,T]^{2}.
\end{equation*}%
Next, we prove that $l(t,r)$ is continuous in $(t,r)$.

From (H4), we obtain that for each $(t,r)\in \lbrack 0,T]^{2},$ there exist
two sequences $\left\{ t_{n}\right\} _{n\in N},\left\{ r_{n}\right\} _{n\in
N}\subset \lbrack 0,T]$ with $\underset{n\rightarrow \infty }{\lim }t_{n}=t$
and $\underset{n\rightarrow \infty }{\lim }r_{n}=r$ such that%
\begin{equation}
\underset{n\rightarrow \infty }{\lim }l\left( t_{n},t\right) =0\text{ and }%
\underset{n\rightarrow \infty }{\lim }l\left( r_{n},r\right) =0.  \label{f_1}
\end{equation}%
By Lagrange Mean Value Theorem, there exists a constant $0<\theta <1$ such
that%
\begin{equation*}
\begin{array}{l}
\displaystyle\left\vert \int_{t_{n}\vee r_{n}}^{T}\left\vert
f(t_{n},s,y,z)-f(r_{n},s,y,z)\right\vert ds\right\vert ^{\beta }-\left\vert
\int_{t\vee r}^{T}\left\vert f(t,s,y,z)-f(r,s,y,z)\right\vert ds\right\vert
^{\beta } \\ 
\displaystyle=\beta \xi ^{\beta -1}\left( \int_{t_{n}\vee
r_{n}}^{T}\left\vert f(t_{n},s,y,z)-f(r_{n},s,y,z)\right\vert ds-\int_{t\vee
r}^{T}\left\vert f(t,s,y,z)-f(r,s,y,z)\right\vert ds\right) ,%
\end{array}%
\end{equation*}%
where $\xi =\Bigl\vert\int_{t\vee r}^{T}\left\vert
f(t,s,y,z)-f(r,s,y,z)\right\vert ds+\theta (\int_{t_{n}\vee
r_{n}}^{T}\left\vert f(t_{n},s,y,z)-f(r_{n},s,y,z)\right\vert ds-\int_{t\vee
r}^{T}|f(t,s,y,z)\Bigr.$\newline
$\Bigl.-f(r,s,y,z)|ds)\Bigr\vert$. It is easy to check that 
\begin{equation*}
\mathbb{\hat{E}}\Biggl[\underset{_{\{\left\vert y\right\vert +\left\vert
z\right\vert \leq N\}}}{\sup }|\xi |^{\beta }\Biggr]\leq C\left( \underset{%
t\in \lbrack 0,T]}{\sup }\mathbb{\hat{E}}\left[ \int_{t}^{T}\left\vert
f(t,s,0,0)\right\vert ^{\beta }ds\right] +N^{\beta }\right) \leq C,
\end{equation*}%
where $C$ is a constant depending on $L,\beta ,T,N.$ Let $m_{n}=t_{n}\vee
r_{n}$ and $m=t\vee r.$ To simplify, we set $\tilde{f}%
(t,r,s,y,z)=f(t,s,y,z)-f(r,s,y,z).$ Then by H$\ddot{\mathrm{o}}$lder's
inequalty, we have%
\begin{equation}
\begin{array}{l}
\left\vert l(t_{n},r_{n})-l(t,r)\right\vert \\ 
\leq C\mathbb{\hat{E}}\left[ \underset{\{\left\vert y\right\vert +\left\vert
z\right\vert \leq N\}}{\sup }\left\vert \xi \right\vert ^{\beta }\right]
^{(\beta -1)/\beta }\mathbb{\hat{E}}\Biggl[\underset{\{\left\vert
y\right\vert +\left\vert z\right\vert \leq N\}}{\sup }\Biggl\vert%
\displaystyle\int_{m_{n}}^{T}|\tilde{f}(t_{n},r_{n},s,y,z)|ds-\displaystyle%
\int_{m}^{T}|\tilde{f}(t,r,s,y,z)|ds\Biggr\vert^{\beta }\Biggr]^{1/\beta }
\\ 
\leq C\Biggl\{\Biggl(\mathbb{\hat{E}}\Biggl[\underset{_{\{\left\vert
y\right\vert +\left\vert z\right\vert \leq N\}}}{\sup }\left\vert %
\displaystyle\int_{m_{n}\vee m}^{T}|\tilde{f}(t_{n},r_{n},s,y,z)-\tilde{f}%
(t,r,s,y,z)|ds\right\vert ^{\beta }\Biggr]^{1/\beta }\Biggr.\Biggr. \\ 
\text{ \ \ \ }\Biggl.\Biggl.+\mathbb{\hat{E}}\Biggl[\underset{_{\{\left\vert
y\right\vert +\left\vert z\right\vert \leq N\}}}{\sup }\left\vert %
\displaystyle\int_{m_{n}}^{m_{n}\vee m}|\tilde{f}(t_{n},r_{n},s,y,z)|ds%
\right\vert ^{\beta }\Biggr]^{1/\beta }+\mathbb{\hat{E}}\Biggl[\underset{%
_{_{\{\left\vert y\right\vert +\left\vert z\right\vert \leq N\}}}}{\sup }%
\left\vert \displaystyle\int_{m}^{m_{n}\vee m}|\tilde{f}(t,r,s,y,z)|ds\right%
\vert ^{\beta }\Biggr]^{1/\beta }\Biggr)\Biggr\},%
\end{array}
\label{f_3}
\end{equation}%
where $C$ is a constant depending on $L,\beta ,T,N.$ Thus, we establish the
estimates for the above terms. From (\ref{f_1}), we have 
\begin{equation}
\underset{n\rightarrow \infty }{\lim }\mathbb{\hat{E}}\Biggl[\underset{%
_{\{\left\vert y\right\vert +\left\vert z\right\vert \leq N\}}}{\sup }%
\left\vert \int_{m_{n}\vee m}^{T}|\tilde{f}(t_{n},r_{n},s,y,z)-\tilde{f}%
(t,r,s,y,z)|ds\right\vert ^{\beta }\Biggr]\leq C\underset{n\rightarrow
\infty }{\lim }\left( l\left( t_{n},t\right) +l\left( r_{n},r\right) \right)
=0,
\end{equation}%
where $C$ depends on $\beta .$ Moreover, applying H$\ddot{\mathrm{o}}$lder's
inequalty yields that 
\begin{equation}
\begin{array}{l}
\mathbb{\hat{E}}\left[ \underset{_{_{\{\left\vert y\right\vert +\left\vert
z\right\vert \leq N\}}}}{\sup }\left\vert \displaystyle\int_{m_{n}}^{m_{n}%
\vee m}|\tilde{f}(t_{n},r_{n},s,y,z)|ds\right\vert ^{\beta }\right]
^{1/\beta } \\ 
\leq \displaystyle\left( m_{n}\vee m-m_{n}\right) ^{\frac{\beta -1}{\beta }}%
\mathbb{\hat{E}}\left[ \underset{_{\{\left\vert y\right\vert +\left\vert
z\right\vert \leq N\}}}{\sup }\int_{m_{n}}^{m_{n}\vee m}|\tilde{f}%
(t_{n},r_{n},s,y,z)|^{\beta }ds\right] ^{1/\beta } \\ 
\leq \displaystyle C\left( m_{n}\vee m-m_{n}\right) ^{\frac{\beta -1}{\beta }%
}\Biggl(\underset{t\in \lbrack 0,T]}{\sup }\mathbb{\hat{E}}\left[
\int_{t}^{T}\left\vert f(t,s,0,0)\right\vert ^{\beta }ds\right] ^{1/\beta
}+\left( m_{n}\vee m-m_{n}\right) ^{\frac{1}{\beta }}LN\Biggr),%
\end{array}
\label{f_5}
\end{equation}%
where $C$ depends on $L,\beta ,T.$ The same method applies to $\mathbb{\hat{E%
}}\left[ \underset{_{\{\left\vert y\right\vert +\left\vert z\right\vert \leq
N\}}}{\sup }\left\vert \int_{m}^{m_{n}\vee m}|\tilde{f}(t,r,s,y,z)|ds\right%
\vert ^{\beta }\right] .$ Combining with (\ref{f_3})-(\ref{f_5}), we obtain $%
\underset{n\rightarrow \infty }{\lim }\left\vert
l(t_{n},r_{n})-l(t,r)\right\vert =0$ for each fixed $(t,r)\in \left[ 0,T%
\right] ^{2}.$ Noting that the interval $[0,T]^{2}$ is compact, we derive
that $l(t,r)$ is uniformly continuous in $(t,r).$ Then for each $\varepsilon
>0,$ there exists a constant $\delta >0$\ such that for each $(t^{\prime
},r^{\prime }),(t,r)\in \left[ 0,T\right] ^{2}$ with $\left\vert t^{\prime
}-t\right\vert <\delta $ and $\left\vert r^{\prime }-r\right\vert <\delta ,$ 
$\left\vert l(t^{\prime },r^{\prime })-l(t,r)\right\vert <\varepsilon .$ In
particular, taking $r=t$, we obtain $\left\vert l(t^{\prime
},t)-l(t,t)\right\vert =\left\vert l(t^{\prime },t)\right\vert <\varepsilon
, $ where $l(t,t)=0.$ Therefore, it shows that (\ref{f_proposition}) holds.
Similarly, we can obtain the uniform continuity of $\phi \left( \cdot
\right) $. Then by Lemma 3.4 in \cite{zhao}, we have $\underset{t\in \left[
0,T\right] }{\sup }\mathbb{\hat{E}}\left[ \left\vert \phi \left( t\right)
\right\vert ^{\beta }\right] <\infty .$
\end{proof}

\begin{lemma}
\label{contin_para}Suppose that $\phi (\cdot )$ is $L_{G}^{\beta }$%
-continuous and $f$ satisfies (H1)-(H4) for some $\beta >1$. Let $(\lambda
(t,\cdot ),\mu (t,\cdot ),$\newline
$\eta (t,\cdot ))$ $\in \mathfrak{S}_{G}^{\alpha }(t,T)$ be the solutions of 
$G$-BSDEs (\ref{para}) for each fixed $t\in \lbrack 0,T]$ and $1<\alpha
<\beta $. Then 
\begin{equation}
\underset{r^{\prime }\rightarrow r}{\lim }\mathbb{\hat{E}}\left[ \left\vert
\lambda (t,r^{\prime })-\lambda (t,r)\right\vert ^{\alpha }\right] =0.
\label{contin_para_1}
\end{equation}%
Moreover, $\lambda \left( \cdot ,\cdot \right) $ is $S_{G}^{\alpha }$%
-uniformly continuous and $\eta (\cdot ,\cdot )$ is $H_{G}^{\alpha }$%
-uniformly continuous.
\end{lemma}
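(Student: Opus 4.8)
The plan is to prove the three assertions in turn, using throughout that for each fixed $t$ the triple $(\lambda(t,\cdot),\mu(t,\cdot),\eta(t,\cdot))$ solves the $G$-BSDE (\ref{para}) on $[t,T]$, and that its restriction to $[t\vee t',T]$ again solves a $G$-BSDE there (with terminal value $\phi(t)$, generator $f(t,\cdot,\cdot)$ and the shifted decreasing $G$-martingale $\eta(t,\cdot)-\eta(t,t\vee t')$). Hence the a priori estimates of Proposition \ref{Dguji1} and Proposition \ref{Dguji2} apply on every subinterval $[m,T]$ with $m=t\vee t'$, which is what makes the parameter comparison tractable. For (\ref{contin_para_1}) I fix $t$ and subtract (\ref{para}) at $r<r'$ to get
\[\lambda(t,r')-\lambda(t,r)=-\int_{r}^{r'}f(t,s,\mu(t,s))\,ds+\int_{r}^{r'}\mu(t,s)\,dB_{s}+\big(\eta(t,r')-\eta(t,r)\big).\]
I would bound the three terms separately: the $ds$-integral by H\"older together with $f(t,\cdot,\mu(t,\cdot))\in M_{G}^{\alpha}(t,T)$ (from (H2), (H3) and $\mu(t,\cdot)\in H_{G}^{\alpha}$), the stochastic integral by the inequality (\ref{BDG}) reducing it to $\mathbb{\hat{E}}[(\int_{r}^{r'}|\mu(t,s)|^{2}ds)^{\alpha/2}]$, and the $\eta$-increment using that $\eta(t,\cdot)$ is a continuous decreasing $G$-martingale dominated by $|\eta(t,T)|\in L_{G}^{\alpha}$. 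Since $\int_{r}^{r'}|f|\,ds$, $\int_{r}^{r'}|\mu|^{2}\,ds$ and $|\eta(t,r')-\eta(t,r)|$ each decrease to $0$ monotonically as $r'\to r$ from either side, the monotone convergence theorem (Theorem \ref{monotone}) gives the limit.

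For the $S_{G}^{\alpha}$-uniform continuity of $\lambda$, I would apply the first estimate of Proposition \ref{Dguji2} to the two $G$-BSDEs carried by parameters $t'$ and $t$ on the common interval $[m,T]$, obtaining
\[\sup_{r\in[m,T]}|\lambda(t',r)-\lambda(t,r)|^{\alpha}\le C\sup_{r\in[m,T]}\mathbb{\hat{E}}_{r}\Big[|\phi(t')-\phi(t)|^{\alpha}+\Big(\int_{m}^{T}|f(t',s,\mu(t,s))-f(t,s,\mu(t,s))|\,ds\Big)^{\alpha}\Big].\]
Taking $\mathbb{\hat{E}}$ and invoking Theorem \ref{Esup} with some $\delta>0$, $\alpha+\delta<\beta$, replaces $\mathbb{\hat{E}}[\sup_{r}\mathbb{\hat{E}}_{r}[\cdot]]$ by the $(\alpha+\delta)$-norm of the bracket. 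The $\phi$-term is controlled by the $L_{G}^{\beta}$-uniform continuity of $\phi$ from Lemma \ref{uni_f}; the generator term is the crux, since $\mu(t,s)$ is random and unbounded, so (H4) (equivalently (\ref{f_proposition})) cannot be applied directly. I would reconcile this by truncation and a finite net: first
\[|f(t',s,\mu(t,s))-f(t,s,\mu(t,s))|\le 2L|\mu(t,s)|\mathbf{1}_{\{|\mu(t,s)|>N\}}+\sup_{|z|\le N}|f(t',s,z)-f(t,s,z)|,\]
where the truncation term is small uniformly in $(t,t')$ by a Chebyshev-type estimate and the uniform bound $\sup_{t}\mathbb{\hat{E}}[(\int_{t}^{T}|\mu(t,s)|^{2}ds)^{\theta/2}]<\infty$ for $\theta<\beta$ (from Proposition \ref{Dguji1}, using $\sup_{t}\mathbb{\hat{E}}[|\phi(t)|^{\beta}]<\infty$ and (H3)). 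For the bounded part I cover $[-N,N]$ by a finite $\varepsilon$-net $\{z_{i}\}$ and use (H2) to write $\sup_{|z|\le N}|f(t',s,z)-f(t,s,z)|\le 2L\varepsilon+\max_{i}|f(t',s,z_{i})-f(t,s,z_{i})|$; each fixed-$z_{i}$ term vanishes uniformly as $|t'-t|\to0$ by Lemma \ref{uni_f}. Sending $n\to\infty$, then $\varepsilon\to0$ and $N\to\infty$, makes the generator term vanish uniformly.

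The $H_{G}^{\alpha}$-uniform continuity of $\eta$ I would obtain in two steps. First, the second ($\widehat Z$) estimate of Proposition \ref{Dguji2} on $[m,T]$ bounds $\mathbb{\hat{E}}[(\int_{m}^{T}|\mu(t',s)-\mu(t,s)|^{2}ds)^{\alpha/2}]$ by a power of $\mathbb{\hat{E}}[\sup_{r\in[m,T]}|\lambda(t',r)-\lambda(t,r)|^{\alpha}]$ times factors that are uniformly bounded in $(t,t')$; by the previous step this tends to $0$ uniformly, so $\mu$ is $H_{G}^{\alpha}$-uniformly continuous. Next, solving (\ref{para}) for the $K$-term,
\[\eta(t,T)-\eta(t,r)=\phi(t)-\lambda(t,r)+\int_{r}^{T}f(t,s,\mu(t,s))\,ds-\int_{r}^{T}\mu(t,s)\,dB_{s},\]
I would express $\eta(t',s)-\eta(t,s)$ on $[m,T]$ through the differences of $\lambda$, of $\phi$, of the generator term, and the stochastic integral $\int_{s}^{T}(\mu(t',\cdot)-\mu(t,\cdot))\,dB$, together with the near-diagonal increment (taking $t<t'=m$, so $\eta(t',m)=0$ and only $\eta(t,t')$ survives). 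Estimating $\int_{m}^{T}|\eta(t',s)-\eta(t,s)|^{2}\,ds$ in $L_{G}^{\alpha/2}$ term by term --- the inequality (\ref{BDG}) for the $\mu(t',\cdot)-\mu(t,\cdot)$ integral, the generator bound just developed, and Theorem \ref{monotone} for the continuous decreasing increment $\eta(t,t')\to\eta(t,t)=0$ --- yields the uniform convergence.

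I expect the main obstacle to be the generator-difference term $\int_{m}^{T}|f(t',s,\mu(t,s))-f(t,s,\mu(t,s))|\,ds$: matching the bounded-argument continuity provided by (H4) against a random, unbounded integrand $\mu(t,s)$, \emph{uniformly} in the two parameters, is precisely what forces the combined truncation-plus-finite-net device and the uniform-in-$t$ a priori bounds on $\mu$ supplied by Proposition \ref{Dguji1}.
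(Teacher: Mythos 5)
Your proposal follows the paper's own proof essentially step for step on everything the paper actually establishes: for (\ref{contin_para_1}) the same decomposition into a $ds$-integral, a stochastic integral handled by (\ref{BDG}), and an $\eta$-increment, with Theorem \ref{monotone} supplying the limit; for the $S_{G}^{\alpha}$-uniform continuity of $\lambda$ the same route via Proposition \ref{Dguji2}, Theorem \ref{Esup}, Lemma \ref{uni_f}, and the truncation $I_{\{|\mu(t,s)|>N\}}$ with the Chebyshev-type bound (this is exactly the paper's (\ref{contin_para_i_2})--(\ref{contin_para_i_3})); and for the third claim, your ``first step'' (Proposition \ref{Dguji2} bounding $\mathbb{\hat{E}}[(\int_{m}^{T}|\mu(t',s)-\mu(t,s)|^{2}ds)^{\alpha/2}]$ by powers of the $\lambda$-difference) \emph{is} the paper's entire proof. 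One genuine improvement: your finite $\varepsilon$-net over $[-N,N]$ makes rigorous a step the paper glosses over --- (H4) and (\ref{f_proposition}) control $\sup_{|z|\le N}\int|f(t',s,z)-f(t,s,z)|ds$ with $z$ constant, whereas the term in (\ref{contin_para_i_2}) has the random, $s$-dependent argument $\mu(t,s)$ inside the integral; passing from one to the other needs exactly the Lipschitz-plus-net device you describe, which the paper leaves implicit when it ``deduces from Lemma \ref{uni_f}.''

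The one divergence concerns your extra ``second step.'' The paper's statement says ``$\eta(\cdot,\cdot)$ is $H_{G}^{\alpha}$-uniformly continuous,'' but its proof only proves this for $\mu$ (the displayed estimate bounds the $\mu$-difference), and the lemma is later invoked (in Lemma \ref{Z_space} and in Theorem \ref{dengjia}, e.g.\ (\ref{5dengjia})) precisely for the uniform continuity of $\mu$; the ``$\eta$'' is evidently a slip for ``$\mu$.'' You instead read the statement literally and attempt the claim for the decreasing $G$-martingale $\eta$, and there your argument has a real gap: you handle the near-diagonal increment $\eta(t,t'\vee t)$ by Theorem \ref{monotone}, which gives $\mathbb{\hat{E}}[|\eta(t,t')|^{\alpha}]\downarrow 0$ as $t'\downarrow t$ only for each \emph{fixed} $t$, whereas $H_{G}^{\alpha}$-uniform continuity requires $\sup_{|t'-t|<1/n}\mathbb{\hat{E}}[|\eta(t,t'\vee t)|^{\alpha}]\rightarrow 0$, i.e.\ uniformity in $t$. (The same issue arises for the companion terms $\mathbb{\hat{E}}[(\int_{t}^{t'}|\mu(t,s)|^{2}ds)^{\alpha/2}]$ and $\mathbb{\hat{E}}[|\lambda(t,t')-\lambda(t,t)|^{\alpha}]$; note the Chebyshev trick fails here because the exponent on $\mu$ is already $2$.) Pointwise monotone convergence cannot be upgraded to uniform convergence without an additional argument --- for instance a Dini/compactness argument exploiting continuity of $(t,r)\mapsto\mathbb{\hat{E}}[|\eta(t,r)|^{\alpha}]$, in the spirit of the compactness step in the proof of Lemma \ref{uni_f}. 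Since the claim the paper intends and uses is the one about $\mu$, which you prove exactly as the paper does, this gap does not affect your coverage of the paper's content, but as a proof of the literal statement it is incomplete at that single point.
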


\begin{proof}
First of all, we prove that $\lambda \left( t,\cdot \right) $ is $%
L_{G}^{\alpha }$-continuous for each fixed $t\in \lbrack 0,T]$ (see \ref%
{contin_para_1})$.$ For each $r,$ $r^{\prime }\in \lbrack t,T]$ with $%
r^{\prime }\geq r$ and fixed $t\in \lbrack 0,T]$, we have 
\begin{equation*}
\lambda (t,r^{\prime })-\lambda (t,r)=\int_{r}^{r^{\prime }}f(t,s,\mu
(t,s))ds-\int_{r}^{r^{\prime }}\mu (t,s)dB_{s}-\left( \eta (t,r^{\prime
})-\eta (t,r)\right) .
\end{equation*}%
By (\ref{BDG}) and (H2), it follows that%
\begin{equation*}
\begin{array}{ll}
\displaystyle\mathbb{\hat{E}}\left[ \left\vert \lambda (t,r^{\prime
})-\lambda (t,r)\right\vert ^{\alpha }\right] \leq & \displaystyle3\left\{
\left( r^{\prime }-r\right) ^{\alpha -1}\mathbb{\hat{E}}\left[
\int_{t}^{T}\left\vert f(t,s,0)\right\vert ^{\alpha }ds\right] +L^{\alpha
}\left( r^{\prime }-r\right) ^{\frac{\alpha }{2}}\mathbb{\hat{E}}\Biggl[%
\left( \int_{t}^{T}\left\vert \mu (t,s)\right\vert ^{2}ds\right) ^{\alpha /2}%
\Biggr]\right. \\ 
& \displaystyle\left. +\mathbb{\hat{E}}\Biggl[\left( \int_{r}^{r^{\prime
}}\left\vert \mu (t,s)\right\vert ^{2}ds\right) ^{\alpha /2}\Biggr]+\mathbb{%
\hat{E}}\left[ \left\vert \eta (t,r^{\prime })-\eta (t,r)\right\vert
^{\alpha }\right] \right\} .%
\end{array}%
\end{equation*}
From Proposition \ref{Dguji1} and Theorem \ref{Esup}, we derive for each
fixed $t\in \left[ 0,T\right] ,$ 
\begin{equation}
\mathbb{\hat{E}}\Biggl [\left( \int_{t}^{T}\left\vert \mu (t,s)\right\vert
^{2}ds\right) ^{\alpha /2}\Biggr]\leq \mathbb{\hat{E}}\left[ \underset{u\in
\lbrack t,T]}{\sup }\left\vert \lambda (t,u)\right\vert ^{\alpha }\right] +%
\mathbb{\hat{E}}\Biggl[\left( \int_{t}^{T}\left\vert f(t,s,0)\right\vert
ds\right) ^{\alpha }\Biggr] \label{contin_para_2}
\end{equation}%
and%
\begin{equation}
\begin{array}{ll}
\displaystyle\mathbb{\hat{E}}\left[ \underset{u\in \lbrack t,T]}{\sup }%
\left\vert \lambda (t,u)\right\vert ^{\alpha }\right] & \displaystyle\leq C%
\mathbb{\hat{E}}\left[ \underset{u\in \lbrack t,T]}{\sup }\mathbb{\hat{E}}%
_{u}\left[ \left\vert \phi (t)\right\vert ^{\alpha }+\left(
\int_{t}^{T}\left\vert f(t,s,0)\right\vert ds\right) ^{\alpha }\right] %
\right] \\ 
& \displaystyle\leq C\left\{ \Biggl\{\underset{t\in \lbrack 0,T]}{\sup }%
\mathbb{\hat{E}}\left[ \left\vert \phi (t)\right\vert ^{\beta }\right] ^{%
\frac{\alpha }{\beta }}+\underset{t\in \lbrack 0,T]}{\sup }\mathbb{\hat{E}}%
\Biggl[\left\vert \int_{t}^{T}\left\vert f(t,s,0)\right\vert ds\right\vert
^{\beta }\Biggr]^{\frac{\alpha }{\beta }}\Biggr\}\right\} ,%
\end{array}
\label{contin_para_3}
\end{equation}
where $C$ depends on $\alpha ,\beta ,T,L,\underline{\sigma }.$ By (H3) and
Lemma \ref{uni_f}, we obtain from (\ref{contin_para_2}) and (\ref%
{contin_para_3}) that for each $1<\alpha <\beta ,$ 
\begin{equation}
\underset{t\in \lbrack 0,T]}{\sup }\mathbb{\hat{E}}\Biggl[\left(
\int_{t}^{T}\left\vert \mu (t,s)\right\vert ^{2}ds\right) ^{\alpha /2}\Biggr]%
+\underset{t\in \lbrack 0,T]}{\sup }\mathbb{\hat{E}}\left[ \underset{u\in
\lbrack t,T]}{\sup }\left\vert \lambda (t,u)\right\vert ^{\alpha }\right]
\leq C,  \label{contin_para_4}
\end{equation}%
where $C$ depends on $\alpha ,\beta ,T,L,\underline{\sigma }.$ Then we have 
\begin{equation*}
\left( r^{\prime }-r\right) ^{\alpha -1}\mathbb{\hat{E}}\left[
\int_{t}^{T}\left\vert f(t,s,0)\right\vert ^{\alpha }ds\right] +\left(
r^{\prime }-r\right) ^{\frac{\alpha }{2}}\mathbb{\hat{E}}\Biggl[\left(
\int_{t}^{T}\left\vert \mu (t,s)\right\vert ^{2}ds\right) ^{\alpha /2}\Biggr]%
\rightarrow 0,\text{ as }r^{\prime }\rightarrow r.
\end{equation*}%
From the monotone convergence Theorem \ref{monotone}, we can easily verify
that $\mathbb{\hat{E}}\left[ \left( \int_{r}^{r^{\prime }}\left\vert \mu
(t,s)\right\vert ^{2}ds\right) ^{\alpha /2}\right] \downarrow 0$ and $%
\mathbb{\hat{E}}\left[ \left\vert \eta (t,r^{\prime })-\eta (t,r)\right\vert
^{\alpha }\right] \downarrow 0$ as $r^{\prime }\downarrow r.$ Consequently, (%
\ref{contin_para_1}) holds.

Next, by using the similar analysis in (\ref{contin_para_3}), we derive from
Proposition \ref{Dguji2} and Theorem \ref{Esup} that for each $t^{\prime
},t\in \left[ 0,T\right] $ and $\delta >0$ with $\alpha +\delta <\beta $,%
\begin{equation*}
\begin{array}{l}
\displaystyle\underset{\left\vert t^{\prime }-t\right\vert \leq \frac{1}{n}}{%
\sup }\mathbb{\hat{E}}\left[ \underset{u\in \lbrack t\vee t^{\prime },T]}{%
\sup }\left\vert \lambda (t^{\prime },u)-\lambda (t,u)\right\vert ^{\alpha }%
\right] \\ 
\displaystyle\leq C\Biggl\{\underset{\left\vert t^{\prime }-t\right\vert
\leq \frac{1}{n}}{\sup }\mathbb{\hat{E}}\left[ \left\vert \phi (t^{\prime
})-\phi (t)\right\vert ^{\alpha +\delta }\right] ^{\frac{\alpha }{\alpha
+\delta }}+\underset{\left\vert t^{\prime }-t\right\vert \leq \frac{1}{n}}{%
\sup }\mathbb{\hat{E}}\Biggl[\left( \int_{t\vee t^{\prime }}^{T}\left\vert
f(t^{\prime },s,\mu (t,s))-f(t,s,\mu (t,s))\right\vert ds\right) ^{\alpha
+\delta }\Biggr]^{\frac{\alpha }{\alpha +\delta }}\Biggr\},%
\end{array}%
\end{equation*}%
where the constant $C$ depends on $\alpha ,\delta ,T,L,\underline{\sigma }.$
By Lemma \ref{uni_f} and H$\ddot{\mathrm{o}}$lder's inequalty$,$ $\phi $ is $%
L_{G}^{\alpha +\delta }$-uniformly continuous$.$ Then it remains to show
that the second term tends to $0$ as $n\rightarrow \infty .$ From the
assumptions (H2)-(H4), we deduce from H$\ddot{\mathrm{o}}$lder's inequalty
that 
\begin{align}
& \underset{\left\vert t^{\prime }-t\right\vert \leq \frac{1}{n}}{\sup }%
\mathbb{\hat{E}}\Biggl[\left( \int_{t\vee t^{\prime }}^{T}\left\vert
f(t^{\prime },s,\mu (t,s))-f(t,s,\mu (t,s))\right\vert ds\right) ^{\alpha
+\delta }\Biggr]^{\frac{\alpha }{\alpha +\delta }}  \notag \\
& \leq C\Biggl\{\underset{\left\vert t^{\prime }-t\right\vert \leq \frac{1}{n%
}}{\sup }\mathbb{\hat{E}}\Biggl[\Biggl(\int_{t\vee t^{\prime }}^{T}\Bigl(%
|f(t^{\prime },s,\mu (t,s))-f(t^{\prime },s,0)|+|f(t^{\prime
},s,0)-f(t,s,0)|+|f(t,s,0)-f(t,s,\mu (t,s))|\Bigr)  \notag \\
& \text{ \ \ }\times I_{\{\left\vert \mu (t,s)\right\vert >N\}}ds\Biggr)%
^{\alpha +\delta }\Biggr]^{\frac{\alpha }{\alpha +\delta }}+\underset{%
\left\vert t^{\prime }-t\right\vert \leq \frac{1}{n}}{\sup }\mathbb{\hat{E}}%
\Biggl[\Biggl(\int_{t\vee t^{\prime }}^{T}\bigl\vert f(t^{\prime },s,\mu
(t,s))-f(t,s,\mu (t,s))\bigr\vert I_{\{\left\vert \mu (t,s)\right\vert \leq
N\}}ds\Biggr)^{\beta }\Biggr]^{\frac{\alpha }{\beta }}\Biggr\}  \notag \\
& \leq C\Biggl\{2\mathbb{\hat{E}}\Biggl[\Biggl(\int_{t}^{T}\left\vert \mu
(t,s)\right\vert I_{\{\left\vert \mu (t,s)\right\vert >N\}}ds\Biggr)^{\alpha
+\delta }\Biggr]^{\frac{\alpha }{\alpha +\delta }}+\underset{\left\vert
t^{\prime }-t\right\vert \leq \frac{1}{n}}{\sup }\mathbb{\hat{E}}\Biggl[%
\left( \int_{t\vee t^{\prime }}^{T}\left\vert f(t^{\prime
},s,0)-f(t,s,0)\right\vert ds\right) ^{\beta }\Biggr]^{\frac{\alpha }{\beta }%
}\Biggr.  \notag \\
& \Biggl.\quad \ +\underset{\left\vert t^{\prime }-t\right\vert \leq \frac{1%
}{n}}{\sup }\mathbb{\hat{E}}\Biggl[\left( \int_{t\vee t^{\prime
}}^{T}\left\vert f(t^{\prime },s,\mu (t,s))-f(t,s,\mu (t,s))\right\vert
I_{\{\left\vert \mu (t,s)\right\vert \leq N\}}ds\right) ^{\beta }\Biggr]^{%
\frac{\alpha }{\beta }}\Biggr\},  \label{contin_para_i_2}
\end{align}%
where the constant $C$ depends on $\alpha ,\delta ,T,L.$ Recalling (\ref%
{contin_para_4}) and taking $\delta =\frac{\beta -\alpha }{2}$, there exists
a constant $0<\gamma =\frac{\beta -\alpha }{2\left( \beta +\alpha \right) }%
<1 $ with $\alpha <\left( 1+\gamma \right) \left( \alpha +\delta \right) =%
\frac{3\beta +\alpha }{4}<\beta $ such that%
\begin{equation}
\begin{array}{ll}
\displaystyle\mathbb{\hat{E}}\Biggl[\Biggr(\int_{t}^{T}\left\vert \mu
(t,s)\right\vert I_{\{\left\vert \mu (t,s)\right\vert >N\}}ds\Biggr)^{\alpha
+\delta }\Biggr] & \displaystyle\leq \frac{1}{N^{\gamma \left( \alpha
+\delta \right) }}\mathbb{\hat{E}}\Biggl[\Biggl(\int_{t}^{T}\left\vert \mu
(t,s)\right\vert ^{1+\gamma }ds\Biggr)^{\alpha +\delta }\Biggr] \\ 
& \displaystyle\leq \frac{C}{N^{\gamma \left( \alpha +\delta \right) }}%
\mathbb{\hat{E}}\Biggl[\Biggl(\int_{t}^{T}\left\vert \mu (t,s)\right\vert
^{2}ds\Biggr)^{\frac{^{\left( 1+\gamma \right) \left( \alpha +\delta \right)
}}{2}}\Biggr]\leq \frac{C}{N^{\gamma \left( \alpha +\delta \right) }},%
\end{array}
\label{contin_para_i_3}
\end{equation}%
where the constant $C$ depends on $\alpha ,\beta ,T,L,\underline{\sigma }.$
In conclusion, sending $n\rightarrow \infty $ and then $N\rightarrow \infty $
in (\ref{contin_para_i_2})$,$ we deduce from Lemma \ref{uni_f} that $\lambda
\left( \cdot ,\cdot \right) $ is $S_{G}^{\alpha }$-uniformly continuous.

Finally, by applying Proposition \ref{Dguji2} and (\ref{contin_para_4}) to $%
G $-BSDEs (\ref{para}), we see that%
\begin{equation*}
\begin{array}{l}
\displaystyle\mathbb{\hat{E}}\left[ \left( \int_{t\vee t^{\prime
}}^{T}\left\vert \mu (t^{\prime },s)-\mu (t,s)\right\vert ^{2}ds\right)
^{\alpha /2}\right] \\ 
\displaystyle\leq C\left\{ \mathbb{\hat{E}}\left[ \underset{u\in \lbrack
t\vee t^{\prime },T]}{\sup }\left\vert \lambda (t^{\prime },u)-\lambda
(t,u)\right\vert ^{\alpha }\right] +\mathbb{\hat{E}}\left[ \underset{u\in
\lbrack t\vee t^{\prime },T]}{\sup }\left\vert \lambda (t^{\prime
},u)-\lambda (t,u)\right\vert ^{\alpha }\right] ^{1/2}\right\} ,%
\end{array}%
\end{equation*}%
where the constant $C$ depends on $\alpha ,\beta ,T,L,\underline{\sigma }.$
Therefore, from the $S_{G}^{\alpha }$-uniform continuity of $\lambda \left(
\cdot ,\cdot \right) $, we obtain that $\eta (\cdot ,\cdot )$ is $%
H_{G}^{\alpha }$-uniformly continuous.
\end{proof}

The following lemma establishes the relationship between the space $%
H_{G}^{\alpha }(0,T)$ and $H_{G}^{\alpha }(\Delta \left( 0,T\right) ),$
which is important in dealing with the well-posedness of $G$-BSVIE (\ref%
{G-BSVIE2}).

\begin{lemma}
\label{Z_space}Suppose that $\phi (\cdot )$ is $L_{G}^{\beta }$-continuous
and $f$ satisfies (H1)-(H4) for some $\beta >1$. Let $\mu (t,\cdot )\in
H_{G}^{\alpha }(t,T)$ be the solutions of $G$-BSDEs (\ref{para}) for each
fixed $t\in \lbrack 0,T]$ and $1<\alpha <\beta $. Then $\mu (\cdot ,\cdot
)\in H_{G}^{\alpha }(\Delta \left( 0,T\right) ).$
\end{lemma}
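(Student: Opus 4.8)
The plan is to establish $\mu\in H_G^\alpha(\Delta(0,T))$ by verifying directly the two requirements implicit in Definition~\ref{Z_H_G}: that $\mu$ has finite $H_G^\alpha(\Delta(0,T))$-norm, and that it lies in the closure of the elementary processes $M_G^0(\Delta(0,T))$. Finiteness is immediate from the uniform estimate (\ref{contin_para_4}), namely $\sup_{t\in[0,T]}\mathbb{\hat{E}}[(\int_t^T|\mu(t,s)|^2\,ds)^{\alpha/2}]\le C$: combining it with the sublinear Fubini-type inequality $\mathbb{\hat{E}}[\int_0^T X_t\,dt]\le\int_0^T\mathbb{\hat{E}}[X_t]\,dt$ (valid for nonnegative, $t$-continuous integrands) applied to $X_t=(\int_t^T|\mu(t,s)|^2\,ds)^{\alpha/2}$ gives $\Vert\mu\Vert_{H_G^\alpha}^\alpha\le TC<\infty$. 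The substance of the proof is the approximation, which I would carry out in two coordinated discretization stages over partitions $\pi_T^N=\{u_0<\cdots<u_N\}$ whose mesh tends to $0$; throughout, $i(t)$ denotes the index with $t\in[u_{i(t)},u_{i(t)+1})$.

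Stage one freezes the first variable. I set $\bar\mu^N(t,s)=\mu(u_{i(t)},s)$ for $t\in[u_{i(t)},u_{i(t)+1})$ and $s\in[t,T]$, which is legitimate since $u_{i(t)}\le t\le s$ and $\mu(u_{i(t)},\cdot)\in H_G^\alpha(u_{i(t)},T)$. Applying the Fubini inequality and then the $H_G^\alpha$-uniform continuity of $\mu$ proved in Lemma~\ref{contin_para}, I obtain
\begin{align*}
\Vert \mu-\bar\mu^N\Vert_{H_G^\alpha}^\alpha
&\le \int_0^T \mathbb{\hat{E}}\Big[\big(\int_{t}^T|\mu(t,s)-\mu(u_{i(t)},s)|^2\,ds\big)^{\alpha/2}\Big]\,dt\\
&\le T\sup_{|t'-t|<1/n}\mathbb{\hat{E}}\Big[\big(\int_{t'\vee t}^T|\mu(t',s)-\mu(t,s)|^2\,ds\big)^{\alpha/2}\Big],
\end{align*}
where I used $t\vee u_{i(t)}=t$ and $|t-u_{i(t)}|<1/n$ once the mesh is below $1/n$; the right-hand side tends to $0$ as $n\to\infty$.

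Stage two discretizes the second variable. For the finitely many points $u_0,\dots,u_{N-1}$ I would exploit the hypothesis $\mu(u_i,\cdot)\in H_G^\alpha(u_i,T)$ to choose, for each $i$, an elementary process $\theta^i\in M_G^0(u_i,T)$ with $\Vert\mu(u_i,\cdot)-\theta^i\Vert_{H_G^\alpha(u_i,T)}<\varepsilon_N$ and $\varepsilon_N\to0$. Passing to a common refinement $\tilde\pi^N=\{w_0<\cdots<w_M\}$ of $\pi_T^N$ and of the grids supporting all the $\theta^i$, each $\theta^i$ is a step process in $s$ on $\tilde\pi^N$ with $Lip(\Omega_{w_l})$-coefficients (writing a simple process on a finer grid only enlarges the left-endpoint $\sigma$-algebras). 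Setting $\mu^N(t,s)=\theta^{i(t)}(s)$ for $t\in[w_k,w_{k+1})$ then produces a genuine element of $M_G^0(\Delta(0,T))$ on the partition $\tilde\pi^N$, with block coefficient $\mu^N(w_k,w_l)\in Lip(\Omega_{w_l})$. Since $t\ge u_{i(t)}$, the Fubini inequality yields
\begin{equation*}
\Vert \bar\mu^N-\mu^N\Vert_{H_G^\alpha}^\alpha \le \int_0^T \Vert \mu(u_{i(t)},\cdot)-\theta^{i(t)}\Vert_{H_G^\alpha(u_{i(t)},T)}^\alpha\,dt \le T\varepsilon_N^\alpha\to0.
\end{equation*}
The triangle inequality for $\Vert\cdot\Vert_{H_G^\alpha(\Delta(0,T))}$ (valid by nested Minkowski inequalities in $s$, then $t$, then under $\mathbb{\hat{E}}$) combines the two stages into $\Vert\mu-\mu^N\Vert_{H_G^\alpha}\to0$, whence $\mu\in H_G^\alpha(\Delta(0,T))$.

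The main obstacle I anticipate is reconciling the two stages with the single-partition structure forced by Definition~\ref{Z_H_G}: the $t$-freezing wants a uniformly fine mesh, whereas the $s$-approximation of each slice $\mu(u_i,\cdot)$ may demand an even finer and $i$-dependent grid, and both must be merged into one partition used simultaneously for both arguments. The common-refinement construction in Stage two is exactly what dissolves this tension, and the fact that only finitely many slices occur for each fixed $N$ is what makes it admissible. A secondary point I would take care to justify is the Fubini-type interchange of $\mathbb{\hat{E}}$ and $\int_0^T\,\cdot\,dt$ for the relevant nonnegative, $t$-continuous integrands, which I would obtain by Riemann-sum approximation together with the sub-additivity and positive homogeneity of $\mathbb{\hat{E}}$.
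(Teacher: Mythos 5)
Your proof is correct and follows essentially the same route as the paper: the paper likewise freezes the first variable on a partition (its $\mu_n(t,s)=\sum_i \mu(t_i,s)I_{[t_i,t_{i+1})}(t)$, controlled by the $H_G^{\alpha}$-uniform continuity from Lemma \ref{contin_para}), then approximates each slice $\mu(t_i,\cdot)\in H_G^{\alpha}(t_i,T)$ by elementary processes and merges them into an element of $M_G^0(\Delta(0,T))$ by refining the partition, exactly your two-stage construction with common refinement. Your explicit treatment of the Fubini-type interchange and of the refinement step only makes precise what the paper leaves as "by adding more points in partition, it is easy to check."
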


\begin{proof}
Let 
\begin{equation*}
\Pi _{T}^{n}=\{0=t_{0}<\cdot \cdot \cdot <t_{n}=T\}\text{, }n\in \mathbb{N}
\end{equation*}%
be a sequence of partition on $[0,T],$ where $\left\Vert \Pi
_{T}^{n}\right\Vert :=\max \{t_{i+1}-t_{i},0\leq i\leq n-1\}\leq \frac{1}{n}%
. $ Based on the $H_{G}^{\alpha }$-uniform continuity of $\mu (t_{i},\cdot )$
for each fixed $t_{i},$ we derive from Lemma \ref{contin_para} that there
exists a constant $n_{\varepsilon }\in \mathbb{N}$ such that for each $n\geq
n_{\varepsilon },$%
\begin{equation}
\underset{i\leq n}{\sup }\text{ }\underset{\left\vert t-t_{i}\right\vert
\leq \frac{1}{n}}{\sup }\text{ }\mathbb{\hat{E}}\Biggl[\left(
\int_{t}^{T}\left\vert \mu (t,s)-\mu (t_{i},s)\right\vert ^{2}ds\right)
^{\alpha /2}\Biggr]<\frac{\varepsilon }{2},\text{ for each }\varepsilon >0.
\label{Z_space2}
\end{equation}%
For each fixed $n\geq n_{\varepsilon }$ and $i$, since $\mu (t_{i},\cdot
)\in H_{G}^{\alpha }(t_{i},T),$ there exists a sequence $\tilde{\mu}%
_{m}(t_{i},s)=\underset{j=0}{\overset{m-1}{\sum }}\tilde{\mu}%
(t_{i},s_{j}^{i})$ $I_{[s_{j}^{i},s_{j+1}^{i})}(s)\in M_{G}^{0}(t_{i},T),$
where $\Pi _{T}^{m}=\{t_{i}=s_{0}^{i}<\cdot \cdot \cdot <s_{m}^{i}=T\}$ is a
partition of $[t_{i},T]$ and $\tilde{\mu}(t_{i},s_{j}^{i})\in Lip(\Omega
_{s_{j}^{i}})$ for each $0\leq j\leq m-1,$ $m\in 
\mathbb{N}
.$ This implies that there exists a constant $m_{\varepsilon }\in \mathbb{N}$
such that for each $m\geq m_{\varepsilon },$ 
\begin{equation}
\underset{i\leq n}{\sup }\text{ }\mathbb{\hat{E}}\left[ \left(
\int_{t_{i}}^{T}\left\vert \tilde{\mu}_{m}(t_{i},s)-\mu (t_{i},s)\right\vert
^{2}ds\right) ^{\alpha /2}\right] <\frac{\varepsilon }{2},\text{ for each }%
\varepsilon >0.  \label{Z_space1}
\end{equation}%
For each $(t,s)\in \Delta \left[ 0,T\right] $, $n\geq n_{\varepsilon }$ and $%
m\geq m_{\varepsilon },$ we define 
\begin{equation*}
\mu _{nm}(t,s)=\underset{i=0}{\overset{n-1}{\sum }}\underset{j=0}{\overset{%
m-1}{\sum }}\tilde{\mu}(t_{i},s_{j}^{i})I_{\Delta _{ij}\left[ 0,T\right]
}(t,s)\text{ and }\mu _{n}(t,s)=\underset{i=0}{\overset{n-1}{\sum }}\mu
(t_{i},s)I_{[t_{i},t_{i+1})}(t)I_{\Delta \left[ 0,T\right] }\left(
t,s\right) ,
\end{equation*}%
where $\Delta _{ij}\left[ 0,T\right] =\left( [t_{i},t_{i+1})\times \lbrack
s_{j}^{i},s_{j+1}^{i})\right) \cap \Delta \left[ 0,T\right] .$ By adding
more points in partition, it is easy to check that $\mu $$_{nm}(\cdot ,\cdot
)\in M_{G}^{0}(\Delta \left( 0,T\right) ).$

Next, we prove $\mu $$_{nm}$ converges to $\mu $ in $H_{G}^{\alpha }(\Delta
\left( 0,T\right) ).$ Noting that for each $n\geq n_{\varepsilon }$ and $%
m\geq m_{\varepsilon },$%
\begin{equation}
\begin{array}{l}
\displaystyle\mathbb{\hat{E}}\Biggl[\int\nolimits_{0}^{T}\left(
\int_{t}^{T}\left\vert \mu _{nm}(t,s)-\mu (t,s)\right\vert ^{2}ds\right)
^{\alpha /2}dt\Biggr] \\ 
\displaystyle\leq C\left\{ \mathbb{\hat{E}}\Biggl[\int\nolimits_{0}^{T}%
\left( \int_{t}^{T}\left\vert \mu _{nm}(t,s)-\mu _{n}(t,s)\right\vert
^{2}ds\right) ^{\alpha /2}dt\Biggr]+\mathbb{\hat{E}}\Biggl[%
\int\nolimits_{0}^{T}\left( \int_{t}^{T}\left\vert \mu _{n}(t,s)-\mu
(t,s)\right\vert ^{2}ds\right) ^{\alpha /2}dt\Biggr]\right\} \\ 
\displaystyle\leq C\Biggl\{\underset{i=0}{\overset{n-1}{\sum }}\mathbb{\hat{E%
}}\Biggl[\Biggl(\int_{t_{i}}^{T}\Biggl\vert\underset{j=0}{\overset{m-1}{\sum 
}}\tilde{\mu}(t_{i},s_{j}^{i})I_{[s_{j}^{i},s_{j+1}^{i})}(s)-\mu (t_{i},s)%
\Biggr\vert^{2}ds\Biggr)^{\alpha /2}\Biggr]\left( t_{i+1}-t_{i}\right) %
\Biggr. \\ 
\displaystyle\text{ \ \ }+\Biggl.\underset{i\leq n}{\sup }\text{ }\underset{%
\left\vert t-t_{i}\right\vert \leq \frac{1}{n}}{\sup }\text{ }\mathbb{\hat{E}%
}\Biggl[\Biggl(\int_{t}^{T}\left\vert \mu (t_{i},s)-\mu (t,s)\right\vert
^{2}ds\Biggr)^{\alpha /2}\Biggr]\Biggr\},%
\end{array}
\label{Z_space3}
\end{equation}%
where the constant $C$ depends on $\alpha ,T.$ Consequently, by (\ref%
{Z_space2})-(\ref{Z_space3}), we obtain that for each $\varepsilon >0,$
there exists constants $n_{\varepsilon },m_{\varepsilon }$ such that for
each $n\geq n_{\varepsilon }$ and $m\geq m_{\varepsilon }$ satisfying%
\begin{equation*}
\mathbb{\hat{E}}\Biggl[\int\nolimits_{0}^{T}\Biggl(\int_{t}^{T}\left\vert
\mu (t,s)-\mu _{nm}(t,s)\right\vert ^{2}ds\Biggr)^{\alpha /2}dt\Biggr]\leq
C\varepsilon .
\end{equation*}%
Since $\varepsilon $ is arbitrary, we get the conclusion.
\end{proof}

\begin{remark}
\label{eqdengjia}Suppose that $(Y,Z,K)\in \mathfrak{\tilde{S}}_{G}^{\alpha
}(\Delta \left( 0,T\right) )$ is the solution of $G$-BSVIE (\ref{G-BSVIE2})
for any $1<\alpha <\beta $. For $\left( t,r\right) \in \Delta \left[ 0,T%
\right] ,$ we set%
\begin{equation}
\lambda \left( t,r\right)
=Y(t)-\int_{t}^{r}f(t,s,Z(t,s))ds+\int_{t}^{r}Z(t,s)dB_{s}+K(t,r)
\label{remark1}
\end{equation}%
and%
\begin{equation}
\mu \left( t,r\right) =Z\left( t,r\right) ,\text{ }\eta \left( t,r\right)
=K\left( t,r\right) .  \label{remark2}
\end{equation}%
Then by (\ref{remark1}) and (\ref{remark2}), we can easily verify that $%
(\lambda (t,\cdot ),\mu (t,\cdot ),\eta (t,\cdot ))\in \mathfrak{S}%
_{G}^{\alpha }(t,T)$ satisfies $G$-BSDEs (\ref{para}) for each fixed $t\in %
\left[ 0,T\right] .$
\end{remark}

Now, we study the existence, uniqueness\ and the continuity of the solution
to $G$-BSVIE (\ref{noY}).

\begin{theorem}
\label{dengjia}Suppose that $\phi (\cdot )$ is $L_{G}^{\beta }$-continuous
and $f$ satisfies (H1)-(H4) for some $\beta >1$. Then $G$-BSVIE (\ref{noY})
has a unique solution $(Y,Z,K)\in \mathfrak{\tilde{S}}_{G}^{\alpha }(\Delta
\left( 0,T\right) )$ for each $1<\alpha <\beta $. Moreover, $Y(\cdot
),K\left( \cdot ,T\right) $ are $L_{G}^{\alpha }$-continuous and $Z\left(
\cdot ,\cdot \right) $ is $H_{G}^{\alpha }$-continuous$.$
\end{theorem}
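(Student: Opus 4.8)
The plan is to realise the solution of the $G$-BSVIE (\ref{noY}) as the diagonal of the parameterised family of $G$-BSDEs (\ref{para}). For each fixed $t$ let $(\lambda(t,\cdot),\mu(t,\cdot),\eta(t,\cdot))\in\mathfrak{S}_{G}^{\alpha}(t,T)$ be the unique solution of (\ref{para}) and set
\[
Y(t):=\lambda(t,t),\qquad Z(t,s):=\mu(t,s),\qquad K(t,s):=\eta(t,s).
\]
Evaluating (\ref{para}) at $r=t$ and using $\eta(t,t)=0$ recovers exactly (\ref{noY}), so existence reduces to verifying that $(Y,Z,K)\in\mathfrak{\tilde{S}}_{G}^{\alpha}(\Delta(0,T))$. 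Here the membership $Z\in\tilde{H}_{G}^{\alpha}(\Delta(0,T))$ is precisely Lemma \ref{Z_space} together with $\mu(t,\cdot)\in H_{G}^{\alpha}(t,T)$, while the required $K$-properties (a decreasing $G$-martingale in its second variable with $K(t,t)=0$ and $K(t,T)\in L_{G}^{\alpha}(\Omega_T)$) are inherited directly from $\eta$.

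The first genuinely new point is $Y\in\tilde{M}_{G}^{\alpha}(0,T)$. I would first show that $t\mapsto Y(t)=\lambda(t,t)$ is $L_{G}^{\alpha}$-continuous: for $t'\geq t$ split
\[
\lambda(t',t')-\lambda(t,t)=\bigl(\lambda(t',t')-\lambda(t,t')\bigr)+\bigl(\lambda(t,t')-\lambda(t,t)\bigr),
\]
bound the first difference by $\sup_{r\in[t',T]}|\lambda(t',r)-\lambda(t,r)|$ and invoke the $S_{G}^{\alpha}$-uniform continuity of $\lambda$ from Lemma \ref{contin_para}, and control the second by (\ref{contin_para_1}). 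Combined with the uniform bound $\sup_{t}\mathbb{\hat{E}}[|\lambda(t,t)|^{\alpha}]\leq C$ read off from (\ref{contin_para_4}) and with $Y(t)\in L_{G}^{\alpha}(\Omega_t)$, I would approximate $Y$ by step processes $\sum_i Y(t_i)I_{[t_i,t_{i+1})}$ (after a further $Lip$-approximation of each $Y(t_i)$, exactly as in the two-stage scheme of Lemma \ref{Z_space}); sub-additivity of $\mathbb{\hat{E}}$ and classical dominated convergence applied to the $dt$-integral then yield $Y\in\tilde{M}_{G}^{\alpha}(0,T)$.

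Uniqueness is clean and structural: if $(Y^i,Z^i,K^i)$, $i=1,2$, both solve (\ref{noY}), then Remark \ref{eqdengjia} turns each into a solution of the family (\ref{para}) for every fixed $t$, and the uniqueness of $G$-BSDE solutions (Theorem \ref{Djie}) forces $\mu^1=\mu^2$, $\eta^1=\eta^2$ and $\lambda^1=\lambda^2$, whence $Z^1=Z^2$, $K^1=K^2$, and, evaluating $\lambda^i$ at $r=t$, $Y^1=Y^2$.

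Finally, for the continuity assertions, $H_{G}^{\alpha}$-continuity of $Z=\mu$ is exactly the estimate produced at the end of the proof of Lemma \ref{contin_para}, and $L_{G}^{\alpha}$-continuity of $Y$ was already obtained above. For $K(\cdot,T)$ I would use the diagonal identity
\[
K(t,T)=\phi(t)+\int_t^T f(t,s,Z(t,s))\,ds-\int_t^T Z(t,s)\,dB_s-Y(t),
\]
and estimate $K(t,T)-K(t_0,T)$ in $L_{G}^{\alpha}$ term by term: $\phi$ and $Y$ contribute through their $L_{G}^{\alpha}$-continuity, the stochastic integrals through (\ref{BDG}) after splitting into a $[t\vee t_0,T]$ part controlled by the $H_{G}^{\alpha}$-continuity of $Z$ and a short interval handled by Theorem \ref{monotone}, and the $ds$-term by combining the Lipschitz bound (H2) on the $Z$-difference with the local continuity (H4) on the $t$-difference. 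I expect this last estimate for $K(\cdot,T)$ to be the main obstacle: since $Z$ is unbounded one must truncate on $\{|Z|\leq N\}$ and reuse the mechanism of (\ref{contin_para_i_2})--(\ref{contin_para_i_3}), sending first $n\to\infty$ and then $N\to\infty$, in order to neutralise the generator's dependence on its third argument while the integration domain shifts from $[t_0,T]$ to $[t,T]$.
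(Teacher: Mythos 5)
Your proposal is correct, and its skeleton coincides with the paper's proof: the solution is realised as the diagonal of the parameterised family (\ref{para}), the membership $Z\in \tilde{H}_{G}^{\alpha }(\Delta (0,T))$ comes from Lemma \ref{Z_space}, uniqueness comes from Remark \ref{eqdengjia} combined with the uniqueness in Theorem \ref{Djie}, and the continuity assertions are handled exactly as in (\ref{noYcontin})--(\ref{noYK_contin}), including the truncation device of (\ref{contin_para_i_2})--(\ref{contin_para_i_3}) for the $K$-term. The one step where you take a genuinely different route is the verification that the diagonal $t\mapsto \lambda (t,t)$ lies in $M_{G}^{\alpha }(0,T)$. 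The paper writes $\lambda (t,t)=\mathbb{\hat{E}}_{t}[\phi (t)+\int_{t}^{T}f(t,s,\mu (t,s))ds]$ and approximates $Y$ by the piecewise conditional-expectation processes $Y_{i}(t)=\mathbb{\hat{E}}_{t}[\phi (t_{i})+\int_{t_{i}}^{T}f(t_{i},s,\mu (t_{i},s))ds]$ on $[t_{i},t_{i+1})$, estimating the error (\ref{1.dengjia}) uniformly over the partition via (\ref{5dengjia})--(\ref{6dengjia}); this silently uses both the above representation of $\lambda (t,t)$ (stripping the symmetric integral and the decreasing $G$-martingale under $\mathbb{\hat{E}}_{t}$) and the fact that conditional-expectation processes belong to $M_{G}^{\alpha }(0,T)$. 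You instead prove $L_{G}^{\alpha }$-continuity of the diagonal first --- by the very splitting the paper deploys only later, in its continuity step --- and then approximate by genuine step processes $\sum_{i}Y(t_{i})I_{[t_{i},t_{i+1})}$, closing the argument with the Fubini-type inequality $\mathbb{\hat{E}}[\int_{0}^{T}\cdot \,dt]\leq \int_{0}^{T}\mathbb{\hat{E}}[\cdot ]\,dt$ and dominated convergence, the dominating bound $\sup_{t}\mathbb{\hat{E}}[|Y(t)|^{\alpha }]\leq C$ being supplied by (\ref{contin_para_4}). Both routes work. Yours is leaner: the continuity of $Y$ is proved once and reused both for membership and for the ``Moreover'' claim, and you avoid the two facts the paper asserts without proof, needing only $Y(t_{i})\in L_{G}^{\alpha }(\Omega _{t_{i}})$ plus a $Lip$-approximation of the coefficients. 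What the paper's detour buys in exchange is a uniform-in-partition modulus, i.e.\ quantitative control of $\Vert Y-Y_{n}\Vert _{M_{G}^{\alpha }}$ with no soft limiting argument. One caution explaining why your ordering of steps is essential: had you tried to compare $\lambda (t,t)$ with $\lambda (t_{i},t_{i})$ uniformly in $i$ through the increments $\int_{t_{i}}^{t}\mu (t_{i},s)dB_{s}$ and $\eta (t_{i},t)$, the monotone convergence Theorem \ref{monotone} yields only pointwise (in $t_{i}$) decay, not a modulus uniform over refining partitions; your pointwise-continuity-plus-dominated-convergence scheme is precisely what renders this harmless, and it is also the trap that the paper's conditional-expectation trick is designed to sidestep.
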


\begin{proof}
\textbf{Existence.} Recalling Theorem \ref{Djie}, $G$-BSDEs admit a unique
solution $(\lambda (t,\cdot ),\mu (t,\cdot ),\eta (t,\cdot ))\in \mathfrak{S}%
_{G}^{\alpha }(t,T)$ for each fixed $t\in \lbrack 0,T].$ Set 
\begin{equation}
Y(t)=\lambda (t,t),\text{ }Z(t,s)=\mu (t,s),\text{ }K(t,s)=\eta (t,s).
\label{dengjiasolution}
\end{equation}%
Then, it is easy to check that $(Y,Z,K)$ satisfy $G$-BSVIE (\ref{noY}) and
the properties of $\eta $ also hold for $K$. Moreover, by Lemma \ref%
{contin_para} and Lemma \ref{Z_space}, we have $Z\in \tilde{H}_{G}^{\alpha
}(\Delta \left( 0,T\right) ).$ Then, it suffices to prove $Y\left( \cdot
\right) \in \tilde{M}_{G}^{\alpha }\left( 0,T\right) .$

Note that $\lambda (t,t)=\mathbb{\hat{E}}_{t}\left[ \phi
(t)+\int_{t}^{T}f(t,s,\mu (t,s))ds\right] =Y(t)\in L_{G}^{\alpha }(\Omega
_{t})$ for each $t\in \left[ 0,T\right] .$ Let%
\begin{equation*}
\Pi _{T}^{n}=\{0=t_{0}<\cdot \cdot \cdot <t_{n}=T\},\text{ }n\in \mathbb{N}
\end{equation*}%
be a sequence of partition on $[0,T],$ where $\left\Vert \Pi
_{T}^{n}\right\Vert :=\max \{t_{i+1}-t_{i},0\leq i\leq n-1\}\leq \frac{1}{n}.
$ Define 
\begin{equation*}
Y_{n}(t)=\overset{n-1}{\underset{i=0}{\sum }}Y_{i}(t)I_{[t_{i},t_{i+1})}(t),%
\quad t\in \lbrack 0,T],
\end{equation*}%
where $Y_{i}(t)=\mathbb{\hat{E}}_{t}\left[ \phi
(t_{i})+\int_{t_{i}}^{T}f(t_{i},s,\mu (t_{i},s))ds\right] \in L_{G}^{\alpha
}(\Omega _{t})$ for any $i$ and $t\in \lbrack 0,T].$ Then we have $%
Y_{n}(\cdot )\in M_{G}^{\alpha }(0,T).$ For any $t\in \lbrack t_{i},t_{i+1}),
$ we derive from (H2) that%
\begin{equation}
\begin{array}{ll}
\displaystyle\mathbb{\hat{E}}\left[ \left\vert Y(t)-Y_{i}(t)\right\vert
^{\alpha }\right] \leq  & \displaystyle C\left\{ \mathbb{\hat{E}}\left[
\left\vert \phi (t)-\phi (t_{i})\right\vert ^{\alpha }\right] +\mathbb{\hat{E%
}}\left[ \left( \int_{t}^{T}\left\vert \mu (t,s)-\mu (t_{i},s)\right\vert
^{2}ds\right) ^{\frac{\alpha }{2}}\right] \right.  \\ 
& \displaystyle\left. +\mathbb{\hat{E}}\left[ \left\vert
\int_{t}^{T}f(t,s,\mu (t_{i},s))-f(t_{i},s,\mu (t_{i},s))ds\right\vert
^{\alpha }\right] +\mathbb{\hat{E}}\left[ \left\vert
\int_{t_{i}}^{t}f(t_{i},s,\mu (t_{i},s))ds\right\vert ^{\alpha }\right]
\right\} ,%
\end{array}
\label{1.dengjia}
\end{equation}%
where the constant $C$ depends on $L,\alpha ,T.$ The estimates for the above
term will be showed as follows. Based on the uniform continuity of $\phi $
and $\mu $ (see Lemma \ref{uni_f} and Lemma \ref{contin_para}), there exists
a constant $n_{1}\in \mathbb{N}$ such that for each $n\geq n_{1},$%
\begin{equation}
\underset{i\leq n}{\sup }\text{ }\underset{\left\vert t-t_{i}\right\vert
\leq \frac{1}{n}}{\sup }\left\{ \mathbb{\hat{E}}\left[ \left\vert \phi
(t)-\phi (t_{i})\right\vert ^{\alpha }\right] +\mathbb{\hat{E}}\left[ \left(
\int_{t}^{T}\left\vert \mu (t,s)-\mu (t_{i},s)\right\vert ^{2}ds\right) ^{%
\frac{\alpha }{2}}\right] \right\} <\frac{\varepsilon }{3},\text{ for each }%
\varepsilon >0.  \label{5dengjia}
\end{equation}%
By similar analysis as in (\ref{contin_para_i_2}) and (\ref{contin_para_i_3}%
), we deduce that for each $t\in \lbrack t_{i},t_{i+1})$ and $N\in \mathbb{N}%
,$%
\begin{equation}
\begin{array}{l}
\displaystyle\underset{i\leq n}{\sup }\text{ }\mathbb{\hat{E}}\left[
\left\vert \int_{t}^{T}f(t,s,\mu (t_{i},s))-f(t_{i},s,\mu
(t_{i},s))ds\right\vert ^{\alpha }\right]  \\ 
\displaystyle\leq C_{1}\left\{ \underset{i\leq n}{\sup }\text{ }\mathbb{\hat{%
E}}\left[ \left\vert \int_{t_{i}}^{T}\left\vert \mu (t_{i},s))\right\vert
I_{\left\{ \left\vert \mu (t_{i},s)\right\vert >N\right\} }ds\right\vert
^{\alpha }\right] +\underset{i\leq n}{\sup }\text{ }\mathbb{\hat{E}}\left[
\left\vert \int_{t}^{T}\left\vert f(t,s,0)-f(t_{i},s,0)\right\vert
ds\right\vert ^{\alpha }\right] \right.  \\ 
\displaystyle\text{ \ \ }\left. +\underset{i\leq n}{\sup }\text{ }\mathbb{%
\hat{E}}\left[ \left\vert \int_{t}^{T}f(t,s,\mu (t_{i},s))-f(t_{i},s,\mu
(t_{i},s))I_{\left\{ \left\vert \mu (t_{i},s)\right\vert \leq N\right\}
}ds\right\vert ^{\alpha }\right] \right\} 
\end{array}
\label{3.dengjia}
\end{equation}%
and%
\begin{equation*}
\begin{array}{ll}
\displaystyle\underset{i\leq n}{\sup }\text{ }\mathbb{\hat{E}}\left[
\left\vert \int_{t_{i}}^{T}\left\vert \mu (t_{i},s))\right\vert I_{\left\{
\left\vert \mu (t_{i},s)\right\vert >N\right\} }ds\right\vert ^{\alpha }%
\right]  & \displaystyle\leq \frac{C_{2}}{N^{\alpha \gamma }}\underset{i\leq
n}{\sup }\text{ }\mathbb{\hat{E}}\left[ \left\vert
\int_{t_{i}}^{T}\left\vert \mu (t_{i},s))\right\vert ^{2}ds\right\vert ^{%
\frac{\left( 1+\gamma \right) \alpha }{2}}\right]  \\ 
& \displaystyle\leq C_{2}N^{-\frac{\alpha \left( \beta -\alpha \right) }{%
\beta +\alpha }},%
\end{array}%
\end{equation*}%
where $0<\gamma =\frac{\beta -\alpha }{\beta +\alpha }<1$ and $\alpha
<\left( 1+\gamma \right) \alpha =\frac{2\alpha \beta }{\beta +\alpha }<\beta 
$. The constants $C_{1},C_{2}$ are given by $C_{1}=C(\alpha ,L,T)$ and $%
C_{2}=C(\alpha ,\beta ,T,\underline{\sigma }).$ Then it holds that for each $%
\varepsilon >0,$ there exists a constant $N_{0}\in \mathbb{N}$ such that 
\begin{equation*}
\underset{i\leq n}{\sup }\text{ }\mathbb{\hat{E}}\left[ \left\vert
\int_{t_{i}}^{T}\left\vert \mu (t_{i},s))\right\vert I_{\left\{ \left\vert
\mu (t_{i},s)\right\vert >N_{0}\right\} }ds\right\vert ^{\alpha }\right] <%
\frac{\varepsilon }{6},\text{ for each }\varepsilon >0.
\end{equation*}%
Taking $N=N_{0}$ in (\ref{3.dengjia}) and recalling Lemma \ref{uni_f}, we
derive from (\ref{3.dengjia}) that there exists a constant $n_{2}\in \mathbb{%
N}$ such that for each $n\geq n_{2},$%
\begin{equation}
\underset{i\leq n}{\sup }\text{ }\underset{\left\vert t-t_{i}\right\vert
\leq \frac{1}{n}}{\sup }\mathbb{\hat{E}}\left[ \left\vert
\int_{t}^{T}f(t,s,\mu (t_{i},s))-f(t_{i},s,\mu (t_{i},s))ds\right\vert
^{\alpha }\right] <\frac{\varepsilon }{3},\text{ for each }\varepsilon >0.
\label{4dengjia}
\end{equation}%
Making use of the estimates in (\ref{contin_para_4}), we get%
\begin{equation*}
\begin{array}{l}
\displaystyle\underset{i\leq n}{\sup }\text{ }\underset{\left\vert
t-t_{i}\right\vert \leq \frac{1}{n}}{\sup }\text{ }\mathbb{\hat{E}}\left[
\left\vert \int_{t_{i}}^{t}f(t_{i},s,\mu (t_{i},s))ds\right\vert ^{\alpha }%
\right]  \\ 
\displaystyle\leq (t-t_{i})^{\alpha -1}\underset{i\leq n}{\sup }\text{ }%
\mathbb{\hat{E}}\left[ \int\nolimits_{t_{i}}^{T}\left\vert
f(t_{i},s,0)\right\vert ^{\alpha }ds\right] +L^{\alpha }(t-t_{i})^{\frac{%
\alpha }{2}}\underset{i\leq n}{\sup }\text{ }\mathbb{\hat{E}}\left[ \left(
\int_{t_{i}}^{T}\left\vert \mu (t_{i},s)\right\vert ^{2}ds\right) ^{\alpha
/2}\right]  \\ 
\displaystyle\leq C\left\Vert \Pi _{T}^{n}\right\Vert ^{\left( \alpha
-1\right) \wedge \frac{\alpha }{2}}\leq Cn^{-\left( \alpha -1\right) \wedge 
\frac{\alpha }{2}},%
\end{array}%
\end{equation*}%
where the constant $C$ depends on $\alpha ,\beta ,T,L,\underline{\sigma }.$
It indicates that there exists a constant $n_{3}\in \mathbb{N}$ such that
for each $n\geq n_{3},$ 
\begin{equation}
\underset{i\leq n}{\sup }\text{ }\underset{\left\vert t-t_{i}\right\vert
\leq \frac{1}{n}}{\sup }\text{ }\mathbb{\hat{E}}\left[ \left\vert
\int_{t_{i}}^{t}f(t_{i},s,\mu (t_{i},s))ds\right\vert ^{\alpha }\right] <%
\frac{\varepsilon }{3},\text{ for each }\varepsilon >0.  \label{6dengjia}
\end{equation}

Therefore, combining (\ref{1.dengjia}) with (\ref{5dengjia})-(\ref{6dengjia}%
), we obtain that there exists a constant $n_{0}=\max \{n_{1},n_{2},n_{3}\}$%
\newline
$\in \mathbb{N}$ such that for each $n\geq n_{0},$%
\begin{equation*}
\mathbb{\hat{E}}\left[ \int_{0}^{T}\left\vert Y(t)-Y_{n}(t)\right\vert
^{\alpha }dt\right] \leq T\text{ }\underset{i\leq n}{\sup }\text{ }\underset{%
\left\vert t-t_{i}\right\vert \leq \frac{1}{n}}{\sup }\text{ }\mathbb{\hat{E}%
}\left[ \left\vert Y(t)-Y_{i}(t)\right\vert ^{\alpha }\right] \leq
C\varepsilon ,
\end{equation*}%
\newline
where $C$ depends on $\alpha ,\beta ,T,L,\underline{\sigma }.$ Since $%
\varepsilon >0$ is arbitrary, we conclude $\underset{n\rightarrow \infty }{%
\lim }\mathbb{\hat{E}}\left[ \int_{0}^{T}\left\vert Y(t)-Y_{n}(t)\right\vert
^{\alpha }dt\right] =0$, which leads to $Y(\cdot )\in M_{G}^{\alpha }(0,T).$
Then $(Y,Z,K)$ in (\ref{dengjiasolution}) is the solution of $G$-BSVIE (\ref%
{noY}).\newline
\textbf{Uniqueness}. Let $(Y_{1},Z_{1},K_{1})$ and $(Y_{2},Z_{2},K_{2})$ be
two solutions of $G$-BSVIE (\ref{noY}). Recalling Remark \ref{eqdengjia}, we
have for each $\left( t,r\right) \in \Delta \left[ 0,T\right] $,%
\begin{equation*}
\lambda _{i}\left( t,r\right)
=Y_{i}(t)-\int_{t}^{r}f(t,s,Z_{i}(t,s))ds+%
\int_{t}^{r}Z_{i}(t,s)dB_{s}+K_{i}(t,r),\text{ }i=1,2
\end{equation*}%
and $\mu _{i}\left( t,r\right) =Z_{i}\left( t,r\right) ,$ $\eta _{i}\left(
t,r\right) =K_{i}\left( t,r\right) ,$ where $\left( \lambda _{i},\mu
_{i},\eta _{i}\right) $ is the solution of $G$-BSDEs (\ref{para}). Based on
the uniqueness of the solution to $G$-BSDEs (\ref{para}), we deduce $\left(
\lambda _{1},\mu _{1},\eta _{1}\right) =\left( \lambda _{2},\mu _{2},\eta
_{2}\right) .$ Set$\ r=t,$ it holds that for each $t\in \lbrack 0,T],$%
\begin{equation*}
\lambda _{1}\left( t,t\right) =Y_{1}(t)=Y_{2}(t)=\lambda _{2}\left(
t,t\right) .
\end{equation*}%
Moreover, we have $Z_{1}=Z_{2},$ $K_{1}=K_{2}.$ Then the uniqueness
arguement follows from it.\newline
\textbf{Continuity. }The continuity of $Z(\cdot ,\cdot )$ can be directly
obtained from $\mu (\cdot ,\cdot )$. For the term $Y$, it is easy to check
that for each $t^{\prime },t\in \left[ 0,T\right] $ with $t^{\prime }>t,$%
\begin{equation}
\begin{array}{ll}
\displaystyle\mathbb{\hat{E}}\left[ \left\vert Y(t)-Y(t^{\prime
})\right\vert ^{\alpha }\right]  & \displaystyle=\mathbb{\hat{E}}\left[
\left\vert \lambda \left( t,t\right) -\lambda \left( t^{\prime },t^{\prime
}\right) \right\vert ^{\alpha }\right]  \\ 
& \displaystyle\leq C\left( \alpha \right) \left\{ \mathbb{\hat{E}}\left[
\left\vert \lambda \left( t,t\right) -\lambda \left( t,t^{\prime }\right)
\right\vert ^{\alpha }\right] +\mathbb{\hat{E}}\left[ \underset{u\in \lbrack
t\vee t^{\prime },T]}{\sup }\left\vert \lambda \left( t,u\right) -\lambda
\left( t^{\prime },u\right) \right\vert ^{\alpha }\right] \right\} .%
\end{array}
\label{noYcontin}
\end{equation}%
Then by Lemma \ref{contin_para}, we derive $Y(\cdot )$ is $L_{G}^{\alpha }$%
-continuous$.$ For the term $K$, we have for each $t^{\prime },t\in \left[
0,T\right] $ with $t^{\prime }>t,$%
\begin{equation}
\begin{array}{ll}
\displaystyle\mathbb{\hat{E}}\left[ \left\vert K(t,T)-K(t^{\prime
},T)\right\vert ^{\alpha }\right] \leq  & \displaystyle C\left\{ \mathbb{%
\hat{E}}\left[ \left\vert Y(t)-Y(t^{\prime })\right\vert ^{\alpha }\right] +%
\mathbb{\hat{E}}\left[ \left\vert \phi (t)-\phi (t^{\prime })\right\vert
^{\alpha }\right] +\mathbb{\hat{E}}\Biggl[\left\vert \int_{t}^{t^{\prime
}}\left\vert f(t,s,Z(t,s))\right\vert ds\right\vert ^{\alpha }\Biggr]\right. 
\\ 
& \displaystyle\left. +\mathbb{\hat{E}}\Biggl[\left\vert \int_{t^{\prime
}}^{T}\left\vert f(t,s,Z(t,s))-f(t^{\prime },s,Z(t^{\prime },s))\right\vert
ds\right\vert ^{\alpha }\Biggr]+\mathbb{\hat{E}}\Biggl[\left\vert
\int_{t}^{t^{\prime }}Z(t,s)dB_{s}\right\vert ^{\alpha }\Biggr]\right.  \\ 
& \displaystyle\left. +\mathbb{\hat{E}}\Biggl[\Biggl\vert\int_{t^{\prime
}}^{T}\left( Z(t,s)-Z(t^{\prime },s)\right) dB_{s}\Biggr\vert^{\alpha }%
\Biggr]\right\} ,%
\end{array}
\label{noYK_contin}
\end{equation}%
where the constant $C$ depends on $\alpha .$ Following the proof of Lemma %
\ref{contin_para}, we deduce from the continuity of $Y\left( \cdot \right) $
and $Z\left( \cdot ,\cdot \right) $ that $K\left( \cdot ,T\right) $ is $%
L_{G}^{\alpha }$-continuous.
\end{proof}

In the following, we establish the estimates of $G$-BSVIE (\ref{noY}).

\begin{proposition}
\label{noYguji1}Assume that $\phi (\cdot )$ is $L_{G}^{\beta }$-continuous
and $f$ satisfies (H1)-(H4) for some $\beta >1$. Let $(Y,Z,K)\in \mathfrak{%
\tilde{S}}_{G}^{\alpha }(\Delta \left( 0,T\right) )$ be the solution of $G$%
-BSVIE (\ref{noY}) for any $1<\alpha <\beta $. Then, for each $t\in \lbrack
0,T]$ and $\delta >0$ with $\alpha +\delta <\beta ,$%
\begin{equation}
\left\vert Y(t)\right\vert ^{\alpha }\leq C\mathbb{\hat{E}}_{t}\left[
\left\vert \phi (t)\right\vert ^{\alpha }+\left( \int_{t}^{T}\left\vert
f(t,s,0)\right\vert ds\right) ^{\alpha }\right] ,  \label{noYguji_1}
\end{equation}%
\begin{eqnarray}
\mathbb{\hat{E}}\left[ \left( \int_{t}^{T}\left\vert Z(t,s)\right\vert
^{2}ds\right) ^{\alpha /2}\right]  &\leq &C_{\delta }\left\{ \mathbb{\hat{E}}%
\left[ \left\vert \phi (t)\right\vert ^{\alpha +\delta }\right] ^{\frac{%
\alpha }{\alpha +\delta }}+\mathbb{\hat{E}}\left[ \left(
\int_{t}^{T}\left\vert f(t,s,0)\right\vert ds\right) ^{\alpha +\delta }%
\right] ^{\frac{\alpha }{\alpha +\delta }}\right.   \notag \\
&&\left. +\mathbb{\hat{E}}\left[ \left\vert \phi (t)\right\vert ^{\alpha
+\delta }\right] ^{\frac{\alpha }{2\left( \alpha +\delta \right) }}\times 
\mathbb{\hat{E}}\left[ \left( \int_{t}^{T}\left\vert f(t,s,0)\right\vert
ds\right) ^{\alpha +\delta }\right] ^{\frac{\alpha }{2\left( \alpha +\delta
\right) }}\right\} ,  \label{noYguji_2}
\end{eqnarray}%
\begin{equation}
\mathbb{\hat{E}}\left[ \left\vert K(t,T)\right\vert ^{\alpha }\right] \leq
C_{\delta }\left\{ \mathbb{\hat{E}}\left[ \left\vert \phi (t)\right\vert
^{\alpha +\delta }\right] ^{\frac{\alpha }{\alpha +\delta }}+\mathbb{\hat{E}}%
\left[ \left( \int_{t}^{T}\left\vert f(t,s,0)\right\vert ds\right) ^{\alpha
+\delta }\right] ^{\frac{\alpha }{\alpha +\delta }}\right\} ,
\label{noYguji_3}
\end{equation}%
where the constants $C,C_{\delta }$ depend on $\alpha ,T,\underline{\sigma }%
,L$ and $\alpha ,\delta ,T,\underline{\sigma },L$ respectively$.$
\end{proposition}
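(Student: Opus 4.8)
The plan is to reduce the three estimates to the family of parameterized $G$-BSDEs (\ref{para}) and then invoke the a priori bounds already available for $G$-BSDEs. By (\ref{dengjiasolution}) (equivalently Remark \ref{eqdengjia}), for each fixed $t\in[0,T]$ the solution $(\lambda(t,\cdot),\mu(t,\cdot),\eta(t,\cdot))$ of (\ref{para}) with terminal value $\phi(t)$ and ($y$-independent) generator $f(t,\cdot,\cdot)$ satisfies $Y(t)=\lambda(t,t)$, $Z(t,\cdot)=\mu(t,\cdot)$ and $K(t,\cdot)=\eta(t,\cdot)$. Since, as noted after (\ref{para}), Proposition \ref{Dguji1} applies verbatim to (\ref{para}) with $|f_s^0|$ replaced by $|f(t,s,0)|$, all three assertions will follow by specializing those bounds and then upgrading the resulting $S_G^\alpha$-norms to unconditional $L_G^{\alpha+\delta}$-norms. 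Throughout, (H3) together with the $L_G^\beta$-continuity of $\phi$ guarantees $\phi(t),\int_t^T|f(t,s,0)|\,ds\in L_G^\beta\subset L_G^{\alpha+\delta}$, so the integrability required below is in force whenever $\alpha+\delta<\beta$.

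First I would prove (\ref{noYguji_1}) directly: applying the first inequality of Proposition \ref{Dguji1} to (\ref{para}) at running time $r=t$ gives
\[
|Y(t)|^\alpha=|\lambda(t,t)|^\alpha\leq C\,\mathbb{\hat{E}}_t\left[|\phi(t)|^\alpha+\left(\int_t^T|f(t,s,0)|\,ds\right)^\alpha\right].
\]
Evaluating the same inequality at a general $u\in[t,T]$, using $\int_u^T\le\int_t^T$, taking the supremum over $u$ and then $\mathbb{\hat{E}}$, I obtain
\[
\|\lambda(t,\cdot)\|_{S_G^\alpha}^\alpha=\mathbb{\hat{E}}\left[\sup_{u\in[t,T]}|\lambda(t,u)|^\alpha\right]\leq C\,\mathbb{\hat{E}}\left[\sup_{u\in[t,T]}\mathbb{\hat{E}}_u\left[|\phi(t)|^\alpha+\left(\int_t^T|f(t,s,0)|\,ds\right)^\alpha\right]\right].
\]
This is the decisive step: under the sublinear expectation one cannot exchange $\mathbb{\hat{E}}$ and $\mathbb{\hat{E}}_u$, so to convert the right-hand side into an unconditional quantity I would apply Theorem \ref{Esup} separately to $\phi(t)$ and to $\int_t^T|f(t,s,0)|\,ds$, at the cost of raising the exponent from $\alpha$ to $\alpha+\delta$. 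Writing $A:=\mathbb{\hat{E}}[|\phi(t)|^{\alpha+\delta}]^{\alpha/(\alpha+\delta)}$ and $B:=\mathbb{\hat{E}}[(\int_t^T|f(t,s,0)|\,ds)^{\alpha+\delta}]^{\alpha/(\alpha+\delta)}$, this yields $\|\lambda(t,\cdot)\|_{S_G^\alpha}^\alpha\leq C(A+B)$.

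Finally I would feed this bound back into the $K$- and $Z$-estimates of Proposition \ref{Dguji1}. For $K$, the third inequality gives $\mathbb{\hat{E}}[|K(t,T)|^\alpha]\le C\{\|\lambda(t,\cdot)\|_{S_G^\alpha}^\alpha+\|\int_t^T|f(t,s,0)|\,ds\|_{L_G^\alpha}^\alpha\}$; since Hölder's inequality bounds the last norm by $B$, estimate (\ref{noYguji_3}) follows. For $Z$, the second inequality involves the product $\|\lambda(t,\cdot)\|_{S_G^\alpha}^{\alpha/2}\times\|\int_t^T|f(t,s,0)|\,ds\|_{L_G^\alpha}^{\alpha/2}$; combining $\|\lambda(t,\cdot)\|_{S_G^\alpha}^{\alpha/2}\le C(A^{1/2}+B^{1/2})$ with the Hölder bound $\|\int_t^T|f(t,s,0)|\,ds\|_{L_G^\alpha}^{\alpha/2}\le B^{1/2}$ produces the cross term $A^{1/2}B^{1/2}$ and absorbs the remainder into $A+B$, giving precisely the three terms in (\ref{noYguji_2}). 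The only genuine difficulty is the passage through Theorem \ref{Esup}; the surrounding manipulations are routine uses of Hölder's inequality and the elementary bound $(a+b)^{1/2}\le a^{1/2}+b^{1/2}$, and the constants combine exactly to the claimed dependence on $\alpha,T,\underline{\sigma},L$ (and additionally $\delta$ for $C_\delta$).
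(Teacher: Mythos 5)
Your proposal is correct and follows essentially the same route as the paper's own proof: identify $(Y(t),Z(t,\cdot),K(t,\cdot))$ with the solution $(\lambda(t,\cdot),\mu(t,\cdot),\eta(t,\cdot))$ of the parameterized $G$-BSDEs (\ref{para}), apply Proposition \ref{Dguji1} (taking $r=t$ for (\ref{noYguji_1})), and upgrade $\mathbb{\hat{E}}\bigl[\sup_{u\in[t,T]}|\lambda(t,u)|^{\alpha}\bigr]$ to unconditional $L_{G}^{\alpha+\delta}$-norms via Theorem \ref{Esup}, then feed this back into the $Z$- and $K$-bounds. The only cosmetic difference is that you make the sublinearity splitting and the Hölder/Lyapunov step $\|\cdot\|_{L_G^{\alpha}}\le\|\cdot\|_{L_G^{\alpha+\delta}}$ explicit, which the paper leaves implicit.
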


\begin{proof}
From Theorem \ref{dengjia}, it holds that $\lambda \left( t,t\right)
=Y\left( t\right) ,$ $Z\left( t,r\right) =\mu \left( t,r\right) ,$ $K\left(
t,r\right) =\eta \left( t,r\right) $ for each $\left( t,r\right) \in \Delta %
\left[ 0,T\right] .$\ Then by Propositon \ref{Dguji1}, we derive%
\begin{equation}
\left\vert \lambda (t,r)\right\vert ^{\alpha }\leq C\mathbb{\hat{E}}_{r}%
\left[ \left\vert \phi (t)\right\vert ^{\alpha }+\left(
\int_{r}^{T}\left\vert f(t,s,0)\right\vert ds\right) ^{\alpha }\right] 
\label{noYguji_4}
\end{equation}%
and
\begin{equation}
\begin{array}{ll}
\displaystyle\mathbb{\hat{E}}\left[ \left( \int_{t}^{T}\left\vert
Z(t,s)\right\vert ^{2}ds\right) ^{\alpha /2}\right] \leq & \displaystyle %
C\left\{ \mathbb{\hat{E}}\left[ \underset{u\in \lbrack t,T]}{\sup }%
\left\vert \lambda (t,u)\right\vert ^{\alpha }\right] +\mathbb{\hat{E}}\left[
\underset{u\in \lbrack t,T]}{\sup }\left\vert \lambda (t,u)\right\vert
^{\alpha }\right] ^{1/2}\right. \\ 
& \displaystyle\left. \times \mathbb{\hat{E}}\left[ \left(
\int_{t}^{T}\left\vert f(t,s,0)\right\vert ds\right) ^{\alpha }\right]
^{1/2}\right\},%
\end{array}
\label{noYguji_5}
\end{equation}%
where the constant $C$ depends on $\alpha ,T,\underline{\sigma },L$.

Taking $r=t$ in (\ref{noYguji_4}), the estimate (\ref{noYguji_1}) is
obtained directly. From Theorem \ref{Esup}, we have for each $\delta >0$
with $\alpha +\delta <\beta ,$%
\begin{equation}
\begin{array}{ll}
\displaystyle\mathbb{\hat{E}}\left[ \underset{u\in \lbrack t,T]}{\sup }%
\left\vert \lambda (t,u)\right\vert ^{\alpha }\right] & \displaystyle\leq
C_{\delta }\mathbb{\hat{E}}\left[ \underset{u\in \lbrack t,T]}{\sup }\mathbb{%
\hat{E}}_{u}\left[ \left\vert \phi (t)\right\vert ^{\alpha }+\left(
\int_{t}^{T}\left\vert f(t,s,0)\right\vert ds\right) ^{\alpha }\right] %
\right] \\ 
& \displaystyle\leq C_{\delta }\Biggl\{\mathbb{\hat{E}}\left[ \left\vert
\phi (t)\right\vert ^{\alpha +\delta }\right] ^{\frac{\alpha }{\alpha
+\delta }}+\mathbb{\hat{E}}\Biggl[\left( \int_{t}^{T}\left\vert
f(t,s,0)\right\vert ds\right) ^{\alpha +\delta }\Biggr]^{\frac{\alpha }{%
\alpha +\delta }}\Biggr\},%
\end{array}
\label{noYguji_7}
\end{equation}%
where the constant $C_{\delta }$ depends on $\alpha ,T,\underline{\sigma }%
,L,\delta .$ Consequently,\ (\ref{noYguji_2}) follows from (\ref{noYguji_5})
and (\ref{noYguji_7}). Similarly, we can get the estimate for $K.$
\end{proof}

\begin{proposition}
\label{noYguji2}Assume that $\phi _{i}(\cdot )$ is $L_{G}^{\beta }$%
-continuous and $f_{i}$ satisfies (H1)-(H4) for some $\beta >1$ and $i=1,2$.
Let $(Y_{i},Z_{i},K_{i})\in \mathfrak{\tilde{S}}_{G}^{\alpha }(\Delta \left(
0,T\right) )$ be the solution of $G$-BSVIE of the form:%
\begin{equation*}
Y^{i}(t)=\phi
^{i}(t)+\int_{t}^{T}f^{i}(t,s,Z^{i}(t,s))ds-%
\int_{t}^{T}Z^{i}(t,s)dB_{s}-K^{i}(t,T),\quad t\in \lbrack 0,T],
\end{equation*}%
where $1<\alpha <\beta .$ Set $\hat{Y}=Y^{1}-Y^{2},$ $\hat{Z}=Z^{1}-Z^{2},\
K=K^{1}-K^{2}\ $and $\hat{\phi}=\phi ^{1}-\phi ^{2}.$ Then there exist
positive constants $C:=C(\alpha ,T,\underline{\sigma },L)$ and $C_{\delta
}:=(\alpha ,T,\underline{\sigma },L,\delta )$ such that for each $\delta >0$
with $\alpha +\delta <\beta ,$%
\begin{equation*}
|\hat{Y}(t)|^{\alpha }\leq C\mathbb{\hat{E}}_{t}\left[ |\hat{\phi}%
(t)|^{\alpha }+\left( \int_{t}^{T}|\hat{f}(t,s)|ds\right) ^{\alpha }\right] ,
\end{equation*}%
\begin{equation}
\begin{array}{ll}
\displaystyle\mathbb{\hat{E}}\Biggl[\Biggl(\int_{t}^{T}|\hat{Z}(t,s)|^{2}ds%
\Biggr)^{\alpha /2}\Biggr]\leq  & \displaystyle C_{\delta }\Biggl\{\mathbb{%
\hat{E}}\left[ |\hat{\phi}(t)|^{\alpha +\delta }\right] ^{\frac{\alpha }{%
\alpha +\delta }}+\mathbb{\hat{E}}\Biggl[\Biggl(\int_{t}^{T}|\hat{f}(t,s)|ds%
\Biggr)^{\alpha +\delta }\Biggr]^{\frac{\alpha }{\alpha +\delta }}\Biggr. \\ 
& \displaystyle+\Biggl.\Biggl(\mathbb{\hat{E}}\left[ |\hat{\phi}(t)|^{\alpha
+\delta }\right] ^{\frac{\alpha }{2\left( \alpha +\delta \right) }}+\mathbb{%
\hat{E}}\Biggl[\Biggl(\int_{t}^{T}|\hat{f}(t,s)|ds\Biggr)^{\alpha +\delta }%
\Biggr]^{\frac{\alpha }{2\left( \alpha +\delta \right) }}\Biggr)\times 
\underset{i=1}{\overset{2}{\sum }}M_{i}\Biggr\},%
\end{array}%
\end{equation}%
where $\hat{f}(t,s)=f^{1}(t,s,Z^{2}(t,s))-f^{2}(t,s,Z^{2}(t,s))$ and 
\begin{equation*}
M_{i}=\mathbb{\hat{E}}\left[ \left\vert \phi ^{i}(t)\right\vert ^{\alpha
+\delta }\right] ^{\frac{\alpha }{2\left( \alpha +\delta \right) }}+\mathbb{%
\hat{E}}\Biggl[\left( \int_{t}^{T}\left\vert f^{i}(t,s,0)\right\vert
ds\right) ^{\alpha +\delta }\Biggr]^{\frac{\alpha }{2\left( \alpha +\delta
\right) }}\text{ for }i=1,2.
\end{equation*}
\end{proposition}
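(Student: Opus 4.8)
The plan is to exploit the $G$-BSVIE/$G$-BSDE correspondence established in Remark \ref{eqdengjia} and then transfer the difference estimates of Proposition \ref{Dguji2}, mirroring closely the proof of Proposition \ref{noYguji1}. For each fixed $t\in[0,T]$, set $\mu^{i}(t,r)=Z^{i}(t,r)$, $\eta^{i}(t,r)=K^{i}(t,r)$ and
$$\lambda^{i}(t,r)=Y^{i}(t)-\int_{t}^{r}f^{i}(t,s,Z^{i}(t,s))ds+\int_{t}^{r}Z^{i}(t,s)dB_{s}+K^{i}(t,r).$$
By Remark \ref{eqdengjia}, each $(\lambda^{i}(t,\cdot),\mu^{i}(t,\cdot),\eta^{i}(t,\cdot))\in\mathfrak{S}_{G}^{\alpha}(t,T)$ solves the $G$-BSDE (\ref{para}) on $[t,T]$ with terminal value $\phi^{i}(t)$ and generator $f^{i}(t,\cdot,\cdot)$; in particular $\lambda^{i}(t,t)=Y^{i}(t)$. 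Writing $\hat{\lambda}(t,\cdot)=\lambda^{1}(t,\cdot)-\lambda^{2}(t,\cdot)$, we have $\hat{\lambda}(t,t)=\hat{Y}(t)$ and $\hat{\mu}(t,\cdot)=\hat{Z}(t,\cdot)$.

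For the first estimate, I would apply the first inequality of Proposition \ref{Dguji2} to these parameterized $G$-BSDEs on $[t,T]$. Since the relevant ``$\hat{\xi}$'' is $\hat{\phi}(t)$ and the relevant ``$\hat{f}_{s}$'' is precisely $\hat{f}(t,s)=f^{1}(t,s,Z^{2}(t,s))-f^{2}(t,s,Z^{2}(t,s))$, this yields
$$|\hat{\lambda}(t,r)|^{\alpha}\leq C\,\mathbb{\hat{E}}_{r}\left[|\hat{\phi}(t)|^{\alpha}+\left(\int_{r}^{T}|\hat{f}(t,s)|ds\right)^{\alpha}\right].$$
Setting $r=t$ and using $\hat{\lambda}(t,t)=\hat{Y}(t)$ gives the first claimed bound directly.

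For the $\hat{Z}$ estimate, I would invoke the second inequality of Proposition \ref{Dguji2} on $[t,T]$, which bounds $\mathbb{\hat{E}}[(\int_{t}^{T}|\hat{Z}(t,s)|^{2}ds)^{\alpha/2}]$ by $C\{\|\hat{\lambda}(t,\cdot)\|_{S_{G}^{\alpha}(t,T)}^{\alpha}+\|\hat{\lambda}(t,\cdot)\|_{S_{G}^{\alpha}(t,T)}^{\alpha/2}\sum_{i=1}^{2}[\|\lambda^{i}(t,\cdot)\|_{S_{G}^{\alpha}(t,T)}^{\alpha/2}+\|\int_{t}^{T}|f^{i}(t,s,0)|ds\|_{L_{G}^{\alpha}}^{\alpha/2}]\}$. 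The remaining task is to convert each $S_{G}^{\alpha}$-norm into the stated $\alpha+\delta$ form. For $\|\hat{\lambda}(t,\cdot)\|_{S_{G}^{\alpha}(t,T)}^{\alpha}=\mathbb{\hat{E}}[\sup_{u\in[t,T]}|\hat{\lambda}(t,u)|^{\alpha}]$, I would feed the pointwise bound above into Theorem \ref{Esup} (splitting $|\hat{\phi}(t)|^{\alpha}$ and $(\int_{t}^{T}|\hat{f}(t,s)|ds)^{\alpha}$ and applying it to each summand), exactly as in (\ref{noYguji_7}), obtaining the bound $\mathbb{\hat{E}}[|\hat{\phi}(t)|^{\alpha+\delta}]^{\alpha/(\alpha+\delta)}+\mathbb{\hat{E}}[(\int_{t}^{T}|\hat{f}(t,s)|ds)^{\alpha+\delta}]^{\alpha/(\alpha+\delta)}$; taking square roots and using subadditivity handles the factor $\|\hat{\lambda}(t,\cdot)\|_{S_{G}^{\alpha}(t,T)}^{\alpha/2}$. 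For the individual terms $\|\lambda^{i}(t,\cdot)\|_{S_{G}^{\alpha}(t,T)}^{\alpha/2}$, I would instead use the single-solution bound of Proposition \ref{Dguji1} together with Theorem \ref{Esup} (again as in (\ref{noYguji_7})), which produces exactly $C_{\delta}M_{i}$; and $\|\int_{t}^{T}|f^{i}(t,s,0)|ds\|_{L_{G}^{\alpha}}^{\alpha/2}$ is dominated by the second summand of $M_{i}$ via Hölder's inequality. Substituting these back and collecting constants yields the second claimed estimate.

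The argument involves no serious obstacle; it is a routine transfer of the $G$-BSDE difference estimates to the fixed-$t$ slices. The only care needed is bookkeeping: keeping all norms on $[t,T]$ (the estimates of Propositions \ref{Dguji1} and \ref{Dguji2} remain valid there, as noted after (\ref{para})) and choosing $\delta$ with $\alpha+\delta<\beta$ so that Theorem \ref{Esup} applies. No uniformity in $t$ is needed here, since all estimates are stated pointwise in $t$.
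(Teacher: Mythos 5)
Your proposal is correct and follows essentially the same route as the paper: the paper's proof of this proposition is simply "similar to Proposition \ref{noYguji1}", whose argument is exactly what you carry out — identify the BSVIE solution with the parameterized $G$-BSDE solutions (Theorem \ref{dengjia}/Remark \ref{eqdengjia}), apply the $G$-BSDE difference estimates of Proposition \ref{Dguji2} at $r=t$, and upgrade the $S_{G}^{\alpha}$-norms to the $\alpha+\delta$ form via Theorem \ref{Esup} as in (\ref{noYguji_7}). Your write-up just makes explicit the bookkeeping (the identification of $\hat{f}$, the square-root/subadditivity step, and the Hölder bound absorbing $\|\int_{t}^{T}|f^{i}(t,s,0)|ds\|_{L_{G}^{\alpha}}^{\alpha/2}$ into $M_{i}$) that the paper leaves implicit.
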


\begin{proof}
The proof is the similar to the analysis in Proposition \ref{noYguji1}.
\end{proof}

\subsection{The $G$-BSVIEs with generators depending on $Y$}

\noindent

We now present a priori estimate for $G$-BSVIE (\ref{G-BSVIE2}).

\begin{proposition}
\label{Yguji1}Assume that $\phi (\cdot )$ is $L_{G}^{\beta }$-continuous and 
$f$ satisfies (H1)-(H4) for some $\beta >1$. Let $(Y,Z,K)\in \mathfrak{%
\tilde{S}}_{G}^{\alpha }(\Delta \left( 0,T\right) )$ be the unique adapted
solution of $G$-BSVIE (\ref{G-BSVIE2}) for any $1<\alpha <\beta .$ Then
there exist positive constants $C:=C\left( \alpha ,T,\underline{\sigma }%
,L\right) $ and $C_{\delta }:=C_{\delta }\left( \alpha ,\delta ,T,\underline{%
\sigma },L\right) $ such that for any $t\in \lbrack 0,T]$ and $\delta >0$
with $\alpha +\delta <\beta ,$ 
\begin{equation}
\mathbb{\hat{E}}\left[ \int_{0}^{T}\left\vert Y(t)\right\vert ^{\alpha }dt%
\right] \leq C\int_{0}^{T}\mathbb{\hat{E}}\left[ \left\vert \phi
(t)\right\vert ^{\alpha }+\left( \int_{t}^{T}\left\vert
f_{0}(t,s)\right\vert ds\right) ^{\alpha }\right] dt,  \label{Yguji_6}
\end{equation}%
\begin{equation}
\begin{array}{ll}
\displaystyle\mathbb{\hat{E}}\left[ \left( \int_{t}^{T}\left\vert
Z(t,s)\right\vert ^{2}ds\right) ^{\alpha /2}\right] \leq  & \displaystyle %
C_{\delta }\left\{ \underset{u\in \lbrack t,T]}{\sup }\left\Vert \phi \left(
u\right) \right\Vert _{L_{G}^{\alpha +\delta }}^{\alpha }+\underset{u\in
\lbrack t,T]}{\sup }\left\Vert \int_{u}^{T}\left\vert f_{0}\left( u,s\right)
\right\vert ds\right\Vert _{L_{G}^{\alpha +\delta }}^{\alpha }\right.  \\ 
& \displaystyle+\Biggl.\underset{u\in \lbrack t,T]}{\sup }\left\Vert \phi
\left( u\right) \right\Vert _{L_{G}^{\alpha +\delta }}^{^{\frac{\alpha }{2}%
}}\times \underset{u\in \lbrack t,T]}{\sup }\left\Vert
\int_{u}^{T}\left\vert f_{0}\left( u,s\right) \right\vert ds\right\Vert
_{L_{G}^{\alpha +\delta }}^{\frac{\alpha }{2}}\Biggr\},%
\end{array}
\label{Yguji_8}
\end{equation}%
\begin{equation}
\mathbb{\hat{E}}\left[ \left\vert K(t,T)\right\vert ^{\alpha }\right] \leq
C_{\delta }\left\{ \underset{u\in \lbrack t,T]}{\sup }\left\Vert \phi \left(
u\right) \right\Vert _{L_{G}^{\alpha +\delta }}^{\alpha }+\underset{u\in
\lbrack t,T]}{\sup }\left\Vert \int_{u}^{T}\left\vert f_{0}\left( u,s\right)
\right\vert ds\right\Vert _{L_{G}^{\alpha +\delta }}^{\alpha }\right\} ,
\end{equation}%
where $f_{0}\left( t,s\right) =f\left( t,s,0,0\right) .$
\end{proposition}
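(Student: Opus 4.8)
The plan is to reduce the $Y$-dependent equation (\ref{G-BSVIE2}) to the $Y$-independent case already settled in Proposition \ref{noYguji1} by freezing the $Y$-slot, and then to close the resulting self-referential estimates through a backward Gronwall argument. Concretely, since $(Y,Z,K)$ solves (\ref{G-BSVIE2}), for each fixed $t$ the associated triple of Remark \ref{eqdengjia} solves the $G$-BSDE (\ref{para}) with the frozen generator $\tilde{f}(t,s,z):=f(t,s,Y(s),z)$, whose free term is $\tilde{f}_{0}(t,s):=f(t,s,Y(s),0)$. This frozen generator is Lipschitz in $z$ with the same constant $L$, and since $Y\in\tilde{M}_{G}^{\alpha'}(0,T)$ for every $1<\alpha'<\beta$, the free term enjoys the required integrability at every level below $\beta$; hence the three estimates (\ref{noYguji_1})--(\ref{noYguji_3}) apply verbatim with $f_{0}$ replaced by $\tilde{f}_{0}$. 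The Lipschitz assumption (H2) then supplies the crucial pointwise bound $|\tilde{f}_{0}(t,s)|\leq |f_{0}(t,s)|+L|Y(s)|$, which separates the genuine data from the feedback of the unknown $Y$.

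For (\ref{Yguji_6}) I would start from the frozen version of (\ref{noYguji_1}), take $\mathbb{\hat{E}}$ on both sides, and use the tower property together with the subadditivity of $\mathbb{\hat{E}}$ to obtain, with $h(t):=\mathbb{\hat{E}}[|Y(t)|^{\alpha}]$ and $g(t):=\mathbb{\hat{E}}[|\phi(t)|^{\alpha}+(\int_{t}^{T}|f_{0}(t,s)|ds)^{\alpha}]$,
\begin{equation*}
h(t)\leq Cg(t)+C\,\mathbb{\hat{E}}\Bigl[\Bigl(\int_{t}^{T}L|Y(s)|\,ds\Bigr)^{\alpha}\Bigr].
\end{equation*}
H\"older's inequality bounds the last term by $CL^{\alpha}T^{\alpha-1}\int_{t}^{T}h(s)\,ds$, using once more the subadditivity of $\mathbb{\hat{E}}$ so that $\mathbb{\hat{E}}[\int_{t}^{T}|Y(s)|^{\alpha}ds]\leq\int_{t}^{T}h(s)\,ds$. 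This yields a backward Gronwall inequality $h(t)\leq Cg(t)+C'\int_{t}^{T}h(s)\,ds$, whose solution gives $h(t)\leq Cg(t)+C''\int_{0}^{T}g(s)\,ds$. Integrating in $t$ and invoking $\mathbb{\hat{E}}[\int_{0}^{T}|Y(t)|^{\alpha}dt]\leq\int_{0}^{T}h(t)\,dt$ produces exactly (\ref{Yguji_6}).

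For the $Z$- and $K$-estimates the same freezing reduces matters to (\ref{noYguji_2})--(\ref{noYguji_3}) with $\tilde{f}_{0}$, so after splitting $|\tilde{f}_{0}|\leq|f_{0}|+L|Y|$ the only new quantity to control is $\mathbb{\hat{E}}[(\int_{t}^{T}|Y(s)|\,ds)^{\alpha+\delta}]^{\alpha/(\alpha+\delta)}$. To bound it I would re-run the Gronwall argument of the previous paragraph at the exponent $\alpha+\delta<\beta$, now keeping the supremum over the initial time, to get
\begin{equation*}
\underset{u\in\lbrack t,T]}{\sup}\mathbb{\hat{E}}\bigl[|Y(u)|^{\alpha+\delta}\bigr]\leq C\Bigl\{\underset{u\in\lbrack t,T]}{\sup}\Vert\phi(u)\Vert_{L_{G}^{\alpha+\delta}}^{\alpha+\delta}+\underset{u\in\lbrack t,T]}{\sup}\Bigl\|\int_{u}^{T}|f_{0}(u,s)|\,ds\Bigr\|_{L_{G}^{\alpha+\delta}}^{\alpha+\delta}\Bigr\}.
\end{equation*}
H\"older's inequality then gives $\mathbb{\hat{E}}[(\int_{t}^{T}|Y(s)|ds)^{\alpha+\delta}]\leq T^{\alpha+\delta}\sup_{u\in\lbrack t,T]}\mathbb{\hat{E}}[|Y(u)|^{\alpha+\delta}]$, and raising to the power $\alpha/(\alpha+\delta)$ together with the elementary inequality $(a+b)^{\theta}\leq a^{\theta}+b^{\theta}$ for $\theta=\alpha/(\alpha+\delta)<1$ turns the right-hand side into the sup-norm terms appearing in (\ref{Yguji_8}) and the $K$-estimate. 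Substituting back into the frozen forms of (\ref{noYguji_2})--(\ref{noYguji_3}) and collecting the cross terms finishes the proof.

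The main obstacle is precisely the self-referential dependence of $\tilde{f}_{0}$ on the unknown $Y$: the estimates of Proposition \ref{noYguji1} feed $Y$ back into their own right-hand sides, and closing this loop is exactly what the backward Gronwall inequality accomplishes. The nonlinearity of $\mathbb{\hat{E}}$ makes this delicate, since one may only exploit subadditivity and the tower property (rather than Fubini or linearity) when passing the $G$-expectation through the time integrals; moreover one must run the Gronwall argument at two distinct exponents --- at level $\alpha$ in integrated form to match (\ref{Yguji_6}), and at level $\alpha+\delta$ in supremum form to match (\ref{Yguji_8}) and the estimate for $K(t,T)$.
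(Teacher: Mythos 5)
Your proposal is correct and follows essentially the same strategy as the paper's proof: freeze the $Y$-slot so that Proposition \ref{noYguji1} applies with frozen free term $f(t,s,Y(s),0)$, split this term as $|f_{0}(t,s)|+L|Y(s)|$ via (H2), and close the resulting self-referential bound with a backward Gronwall inequality, run at exponent $\alpha$ for (\ref{Yguji_6}) and at exponent $\alpha+\delta$ for (\ref{Yguji_8}) and the $K$-estimate. The one real difference is where Gronwall is applied: you run it on the deterministic functions $t\mapsto \mathbb{\hat{E}}\left[ |Y(t)|^{\alpha}\right]$ and $u\mapsto \mathbb{\hat{E}}\left[ |Y(u)|^{\alpha+\delta}\right]$, whereas the paper runs it, for each fixed $t$, on $s\mapsto \mathbb{\hat{E}}_{t}\left[ |Y(s)|^{\alpha}\right]$ using the tower property; this yields the quasi-sure pointwise bound (\ref{Yguji_9}) on $|Y(t)|^{\alpha}$ in terms of conditional expectations of the data, from which (\ref{Yguji_6}) and (applied at exponent $\alpha+\delta$) the bound (\ref{Yguji_12}) feeding into (\ref{Yguji_8}) are both read off. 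Your unconditional version is slightly more elementary and fully sufficient for the stated estimates, since they are all at the level of expectations; what it does not deliver is the pointwise analogue of (\ref{noYguji_1}) that the paper's route records along the way. Both routes leave the same two points implicit, at the same level of rigor as the paper: that the frozen generator $f(t,\cdot ,Y(\cdot ),z)$ is admissible for the estimates of Proposition \ref{noYguji1} for each fixed $t$ (this is what the computation in Lemma \ref{uni_picard} justifies), and that sub-additivity allows $\mathbb{\hat{E}}$ to be passed inside $ds$-integrals, i.e. $\mathbb{\hat{E}}\left[ \int_{t}^{T}|Y(s)|^{p}ds\right] \leq \int_{t}^{T}\mathbb{\hat{E}}\left[ |Y(s)|^{p}\right] ds$, which the paper also uses implicitly in passing from (\ref{Yguji_9}) to (\ref{Yguji_6}) and (\ref{Yguji_12}).
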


\begin{proof}
From the estimates in Proposition \ref{noYguji1}, we derive that for each
fixed $t\in \left[ 0,T\right] $ and each $s\geq t,$%
\begin{equation}
\begin{array}{ll}
\displaystyle\mathbb{\hat{E}}_{t}\left[ \left\vert Y(s)\right\vert ^{\alpha }%
\right] & \displaystyle\leq C\mathbb{\hat{E}}_{t}\left[ \mathbb{\hat{E}}_{s}%
\left[ \left\vert \phi (s)\right\vert ^{\alpha }+\left(
\int_{s}^{T}\left\vert f(s,r,Y\left( r\right) ,0)\right\vert dr\right)
^{\alpha }\right] \right] \\ 
& \displaystyle\leq C\mathbb{\hat{E}}_{t}\left[ \left\vert \phi
(s)\right\vert ^{\alpha }+\left( \int_{s}^{T}\left\vert
f(s,r,0,0)\right\vert dr\right) ^{\alpha }\right] +CL^{\alpha }T^{\alpha
-1}\int_{s}^{T}\mathbb{\hat{E}}_{t}\left[ \left\vert Y\left( r\right)
\right\vert ^{\alpha }\right] dr,%
\end{array}
\label{Yguji1_1}
\end{equation}%
where the constant $C$ depends on $T,\underline{\sigma },\alpha ,L.$
Applying Gronwall's inequality to (\ref{Yguji1_1}), we get for each fixed $%
t\in \left[ 0,T\right] $ and each $s\geq t,$%
\begin{equation*}
\begin{array}{ll}
\displaystyle\mathbb{\hat{E}}_{t}\left[ \left\vert Y(s)\right\vert ^{\alpha }%
\right] \leq & \displaystyle C\left\{ \mathbb{\hat{E}}_{t}\left[ \left\vert
\phi (s)\right\vert ^{\alpha }+\left( \int_{s}^{T}\left\vert
f(s,r,0,0)\right\vert dr\right) ^{\alpha }\right] \right. \\ 
& \displaystyle\left. +\int_{s}^{T}\mathbb{\hat{E}}_{t}\left[ \left\vert
\phi (u)\right\vert ^{\alpha }+\left( \int_{u}^{T}\left\vert
f(u,r,0,0)\right\vert dr\right) ^{\alpha }\right] e^{C\left( u-s\right)
}du\right\} ,%
\end{array}%
\end{equation*}%
where $C=C\left( T,\underline{\sigma },\alpha ,L\right) $ is a constant
varying from line to line. Let $s=t,$ it holds that for any $1<\alpha <\beta
,$%
\begin{equation}
\begin{array}{ll}
\displaystyle\left\vert Y(t)\right\vert ^{\alpha }=\mathbb{\hat{E}}_{t}\left[
\left\vert Y(t)\right\vert ^{\alpha }\right] \leq & \displaystyle C\left\{ 
\mathbb{\hat{E}}_{t}\left[ \left\vert \phi (t)\right\vert ^{\alpha }+\left(
\int_{t}^{T}\left\vert f(t,r,0,0)\right\vert dr\right) ^{\alpha }\right]
\right. \\ 
& \displaystyle\left. +\int_{t}^{T}\mathbb{\hat{E}}_{t}\left[ \left\vert
\phi (u)\right\vert ^{\alpha }+\left( \int_{u}^{T}\left\vert
f(u,r,0,0)\right\vert dr\right) ^{\alpha }\right] e^{C\left( u-t\right)
}du\right\} ,%
\end{array}
\label{Yguji_9}
\end{equation}%
which indicates that 
\begin{equation*}
\begin{array}{ll}
\displaystyle\mathbb{\hat{E}}\left[ \int_{0}^{T}\left\vert Y(t)\right\vert
^{\alpha }dt\right] \leq & \displaystyle C\left\{ \int_{0}^{T}\mathbb{\hat{E}%
}\left[ \left\vert \phi (t)\right\vert ^{\alpha }+\left(
\int_{t}^{T}\left\vert f(t,r,0,0)\right\vert dr\right) ^{\alpha }\right]
dt\right. \\ 
& \displaystyle\left. +e^{CT}\int_{0}^{T}\int_{0}^{T}\mathbb{\hat{E}}\left[
\left\vert \phi (u)\right\vert ^{\alpha }+\left( \int_{u}^{T}\left\vert
f(u,r,0,0)\right\vert dr\right) ^{\alpha }\right] dudt\right\} .%
\end{array}%
\end{equation*}
The constant $C$ depends on $\alpha ,T,\underline{\sigma },L$. Then, (\ref%
{Yguji_6}) follows from it.

For the term $Z,$ it follows from Proposition \ref{noYguji1} that%
\begin{equation}
\begin{array}{l}
\displaystyle\mathbb{\hat{E}}\left[ \left( \int_{t}^{T}\left\vert
Z(t,s)\right\vert ^{2}ds\right) ^{\alpha /2}\right]  \\ 
\displaystyle\leq C_{\delta }\left\{ \left\Vert \phi \left( t\right)
\right\Vert _{L_{G}^{\alpha +\delta }}^{\alpha }+\left\Vert
\int_{t}^{T}\left\vert f(t,s,0,0)\right\vert ds\right\Vert _{L_{G}^{\alpha
+\delta }}^{\alpha }+\mathbb{\hat{E}}\left[ \int_{t}^{T}\left\vert
Y(s)\right\vert ^{\alpha +\delta }ds\right] ^{\frac{\alpha }{\alpha +\delta }%
}+\left\Vert \phi \left( t\right) \right\Vert _{L_{G}^{\alpha +\delta }}^{%
\frac{\alpha }{2}}\right.  \\ 
\text{ \ }\displaystyle\left. \times \left( \left\Vert
\int_{t}^{T}\left\vert f(t,s,0,0)\right\vert ds\right\Vert _{L_{G}^{\alpha
+\delta }}^{\frac{\alpha }{2}}+\mathbb{\hat{E}}\left[ \int_{t}^{T}\left\vert
Y(s)\right\vert ^{\alpha +\delta }ds\right] ^{\frac{\alpha }{2\left( \alpha
+\delta \right) }}\right) \right\} ,%
\end{array}
\label{Yguji_10}
\end{equation}%
where the constant $C_{\delta }$ depends on $\alpha ,T,\underline{\sigma }%
,L,\delta .$ In addition, note that (\ref{Yguji_9}) implies that 
\begin{equation}
\mathbb{\hat{E}}\left[ \int_{t}^{T}\left\vert Y(s)\right\vert ^{\alpha
+\delta }ds\right] ^{\frac{\alpha }{\alpha +\delta }}\leq C\left(
\int_{t}^{T}\mathbb{\hat{E}}\left[ \left\vert \phi (s)\right\vert ^{\alpha
+\delta }+\left( \int_{s}^{T}\left\vert f_{0}(s,r)\right\vert dr\right)
^{\alpha +\delta }\right] ds\right) ^{\frac{\alpha }{\alpha +\delta }}.
\label{Yguji_12}
\end{equation}%
Then, the estimate (\ref{Yguji_8}) follows from (\ref{Yguji_10}) and (\ref%
{Yguji_12}). Using the same method, we obtain the estimates for $K.$
\end{proof}

\begin{proposition}
\label{Yguji2}Suppose that $\phi ^{i}(\cdot )$ is $L_{G}^{\beta }$%
-continuous and $f^{i}$ satisfies (H1)-(H4) for some $\beta >1$ and $i=1,2$.
Let $(Y^{i},Z^{i},K^{i})\in \mathfrak{\tilde{S}}_{G}^{\alpha }(\Delta \left(
0,T\right) )$ be the solution of $G$-BSVIE of the form: 
\begin{equation}
Y^{i}(t)=\phi
^{i}(t)+\int_{t}^{T}f^{i}(t,s,Y^{i}(s),Z^{i}(t,s))ds-%
\int_{t}^{T}Z^{i}(t,s)dB_{s}-K^{i}(t,T),\quad t\in \lbrack 0,T],
\label{Ycha}
\end{equation}%
where $1<\alpha <\beta .$ Set $\hat{Y}=Y^{1}-Y^{2},$ $\hat{Z}=Z^{1}-Z^{2},\
K=K^{1}-K^{2}\ $and $\hat{\phi}=\phi ^{1}-\phi ^{2}.$ Then there exist
positive constants $C:=C\left( \alpha ,T,\underline{\sigma },L\right) $ and $%
C_{\delta }=C_{\delta }\left( \alpha ,\delta ,T,\underline{\sigma },L\right) 
$ such that for any $t\in \lbrack 0,T]$ and $\delta >0$ with $\alpha +\delta
<\beta ,$ 
\begin{equation}
\mathbb{\hat{E}}\left[ \int_{0}^{T}|\hat{Y}(t)|^{\alpha }dt\right] \leq
C\int_{0}^{T}\mathbb{\hat{E}}\left[ |\hat{\phi}\left( t\right) |^{\alpha
}+\left( \int_{t}^{T}|\hat{f}(t,s)|ds\right) ^{\alpha }\right] dt,
\label{Yguji2_1}
\end{equation}%
\begin{equation}
\begin{array}{ll}
\displaystyle\mathbb{\hat{E}}\left[ \left( \int_{t}^{T}\left\vert \hat{Z}%
(t,s)\right\vert ^{2}ds\right) ^{\alpha /2}\right] \leq  & \displaystyle %
C_{\delta }\left\{ \underset{u\in \lbrack t,T]}{\sup }||\hat{\phi}\left(
u\right) ||_{L_{G}^{\alpha +\delta }}^{\alpha }+\underset{u\in \lbrack u,T]}{%
\sup }\left\Vert \int_{u}^{T}|\hat{f}(u,s)|ds\right\Vert _{L_{G}^{\alpha
+\delta }}^{\alpha }\right.  \\ 
& \displaystyle\left. +\left( \underset{u\in \lbrack t,T]}{\sup }||\hat{\phi}%
\left( u\right) ||_{L_{G}^{\alpha +\delta }}^{\frac{\alpha }{2}}+\underset{%
u\in \lbrack t,T]}{\sup }\left\Vert \int_{u}^{T}|\hat{f}(u,s)|ds\right\Vert
_{L_{G}^{\alpha +\delta }}^{\frac{\alpha }{2}}\right) \times \underset{i=1}{%
\overset{2}{\sum }}M_{i}\right\} ,%
\end{array}
\label{Yguji2_2}
\end{equation}%
where $\hat{f}(t,s)=f^{1}(t,s,Y^{2}\left( s\right)
,Z^{2}(t,s))-f^{2}(t,s,Y^{2}\left( s\right) ,Z^{2}(t,s))$ and%
\begin{equation*}
M_{i}=\underset{u\in \lbrack t,T]}{\sup }||\phi ^{i}\left( u\right)
||_{L_{G}^{\alpha +\delta }}^{\frac{\alpha }{2}}+\underset{u\in \lbrack t,T]}%
{\sup }\left\Vert \int_{u}^{T}\left\vert f^{i}(u,s,0,0)\right\vert
ds\right\Vert _{L_{G}^{\alpha +\delta }}^{\frac{\alpha }{2}}\text{ for }%
i=1,2.
\end{equation*}
\end{proposition}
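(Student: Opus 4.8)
The plan is to reduce everything to the $Y$-independent difference estimates of Proposition~\ref{noYguji2} by freezing the $Y$-argument. For each fixed $t\in[0,T]$, I would regard $Y^{i}$ as the solution of the $G$-BSVIE~(\ref{noY}) whose generator is $\tilde{f}^{i}(t,s,z):=f^{i}(t,s,Y^{i}(s),z)$; since $Y^{i}(s)\in L_{G}^{\alpha}(\Omega_{s})$ and $f^{i}$ satisfies (H1)--(H4), each $\tilde{f}^{i}$ again satisfies the required assumptions. Applying Proposition~\ref{noYguji2} to the pair $(\tilde{f}^{1},\tilde{f}^{2})$ then produces the same three inequalities, but with the driver difference replaced by $\int_{t}^{T}|f^{1}(t,s,Y^{1}(s),Z^{2}(t,s))-f^{2}(t,s,Y^{2}(s),Z^{2}(t,s))|\,ds$. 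The key splitting step is to insert the intermediate term $f^{1}(t,s,Y^{2}(s),Z^{2}(t,s))$ and invoke (H2), which gives
\begin{equation*}
\int_{t}^{T}|f^{1}(t,s,Y^{1}(s),Z^{2}(t,s))-f^{2}(t,s,Y^{2}(s),Z^{2}(t,s))|\,ds\leq L\int_{t}^{T}|\hat{Y}(s)|\,ds+\int_{t}^{T}|\hat{f}(t,s)|\,ds,
\end{equation*}
with $\hat{f}(t,s)=f^{1}(t,s,Y^{2}(s),Z^{2}(t,s))-f^{2}(t,s,Y^{2}(s),Z^{2}(t,s))$ exactly as in the statement.

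For the $\hat{Y}$-bound I would mimic the Gronwall argument of Proposition~\ref{Yguji1}. Writing the frozen estimate at a generic time $s\geq t$ and taking $\mathbb{\hat{E}}_{t}$, the Lipschitz contribution is $C\,\mathbb{\hat{E}}_{t}[(\int_{s}^{T}|\hat{Y}(r)|\,dr)^{\alpha}]$, which by H\"older's inequality and the sub-additivity of $\mathbb{\hat{E}}_{t}$ is dominated by $CT^{\alpha-1}\int_{s}^{T}\mathbb{\hat{E}}_{t}[|\hat{Y}(r)|^{\alpha}]\,dr$. This yields
\begin{equation*}
\mathbb{\hat{E}}_{t}[|\hat{Y}(s)|^{\alpha}]\leq C\,\mathbb{\hat{E}}_{t}\Big[|\hat{\phi}(s)|^{\alpha}+\Big(\int_{s}^{T}|\hat{f}(s,r)|\,dr\Big)^{\alpha}\Big]+C\int_{s}^{T}\mathbb{\hat{E}}_{t}[|\hat{Y}(r)|^{\alpha}]\,dr,
\end{equation*}
to which Gronwall's inequality applies. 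Setting $s=t$, using $|\hat{Y}(t)|^{\alpha}=\mathbb{\hat{E}}_{t}[|\hat{Y}(t)|^{\alpha}]$, and integrating over $t\in[0,T]$ then gives~(\ref{Yguji2_1}); the same argument run with exponent $\alpha+\delta<\beta$ furnishes the pointwise $\hat{Y}$-bound needed below.

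For the $\hat{Z}$-bound, I would invoke the $Z$-inequality of Proposition~\ref{noYguji2} for the frozen generators. The driver-difference input is handled by the splitting above; the extra term $L\int_{t}^{T}|\hat{Y}(s)|\,ds$ is absorbed by the pointwise $\hat{Y}$-estimate from the first part (the difference analogue of~(\ref{Yguji_9})) applied with exponent $\alpha+\delta$, which, exactly as in~(\ref{Yguji_12}), converts it into $\sup_{u\in[t,T]}\|\hat{\phi}(u)\|_{L_{G}^{\alpha+\delta}}$ and $\sup_{u\in[t,T]}\|\int_{u}^{T}|\hat{f}(u,s)|\,ds\|_{L_{G}^{\alpha+\delta}}$. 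The factors $M_{i}$ emerge from the terms $\mathbb{\hat{E}}[(\int_{t}^{T}|\tilde{f}^{i}(t,s,0)|\,ds)^{\alpha+\delta}]^{\alpha/(2(\alpha+\delta))}$ in Proposition~\ref{noYguji2}: since $\tilde{f}^{i}(t,s,0)=f^{i}(t,s,Y^{i}(s),0)$ and $|f^{i}(t,s,Y^{i}(s),0)|\leq|f^{i}(t,s,0,0)|+L|Y^{i}(s)|$ by (H2), applying the single-solution estimates~(\ref{Yguji_9})--(\ref{Yguji_12}) of Proposition~\ref{Yguji1} to each $Y^{i}$ reduces these to the stated $M_{i}$. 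Collecting all contributions yields~(\ref{Yguji2_2}).

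The main obstacle I anticipate is the bookkeeping that turns the frozen-coefficient output into the stated form: namely, linearizing the $Y$-dependent Lipschitz contribution so that Gronwall's inequality closes for $\hat{Y}$, and, for $\hat{Z}$, reabsorbing both the $\int_{t}^{T}|\hat{Y}(s)|\,ds$ term and the $Y^{i}$-dependence hidden in $\tilde{f}^{i}(\cdot,\cdot,0)$ into the supremum norms of the data through the a priori estimates of Proposition~\ref{Yguji1}. The genuinely $G$-analytic subtleties (the nonlinearity of $\mathbb{\hat{E}}$ and the presence of the decreasing $G$-martingale $\hat{K}$) are already packaged inside Proposition~\ref{noYguji2}, so no further $G$-stochastic analysis beyond the earlier results should be required.
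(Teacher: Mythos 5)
Your proposal is correct and follows essentially the same route as the paper: freeze the $Y$-arguments so that Proposition~\ref{noYguji2} applies, split the driver difference via $|f^{1}(t,s,Y^{1}(s),Z^{2}(t,s))-f^{2}(t,s,Y^{2}(s),Z^{2}(t,s))|\leq |\hat{f}(t,s)|+L|\hat{Y}(s)|$, and then close the argument with the Gronwall scheme of Proposition~\ref{Yguji1}. The paper's own proof is just a terse version of this; your write-up merely fills in the bookkeeping (the Gronwall step at exponent $\alpha+\delta$ and the reduction of the $\tilde{f}^{i}(\cdot,\cdot,0)$ terms to the $M_{i}$) that the paper leaves implicit.
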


\begin{proof}
Applying Proposition \ref{noYguji2} to (\ref{Ycha}), we obtain 
\begin{equation}
\begin{array}{ll}
\displaystyle\mathbb{\hat{E}}_{t}\left[ |\hat{Y}(s)|^{\alpha }\right] & %
\displaystyle\leq C\mathbb{\hat{E}}_{t}\left[ \mathbb{\hat{E}}_{s}\left[ |%
\hat{\phi}(s)|^{\alpha }+\left( \int_{s}^{T}\left\vert f^{1}(s,r,Y^{1}\left(
r\right) ,Z^{2}(s,r))-f^{2}(s,r,Y^{1}\left( r\right) ,Z^{2}(s,r))\right\vert
dr\right) ^{\alpha }\right] \right] .%
\end{array}%
\end{equation}%
Note that $\left\vert f^{1}(t,s,Y^{1}\left( s\right)
,Z^{2}(t,s))-f^{2}(t,s,Y^{2}\left( s\right) ,Z^{2}(t,s))\right\vert \leq |%
\hat{f}(t,s)|+L|\hat{Y}(s)|.$ Similar to the proof of Proposition \ref%
{Yguji1}, it is easy to prove (\ref{Yguji2_1}) and (\ref{Yguji2_2}).
\end{proof}

The proof of existence and uniqueness relies on a Picard iteration.
Therefore, it is necessary to verify that the following Picard iteration
sequence is well-defined. For each $n\in \mathbb{N}$, we define%
\begin{equation}
Y_{n}\left( t\right) =\phi \left( t\right) +\int_{t}^{T}f\left(
t,s,Y_{n-1}\left( s\right) ,Z_{n}\left( t,s\right) \right)
ds-\int_{t}^{T}Z_{n}\left( t,s\right) dB_{s}-K_{n}(t,T)  \label{picard}
\end{equation}%
and $Y_{0}(t)=0.$

\begin{lemma}
\label{uni_picard}Suppose that $\phi (\cdot )$ is $L_{G}^{\beta }$%
-continuous and $f$ satisfies (H1)-(H4) for some $\beta >1$. Then $G$-BSVIEs
(\ref{picard}) admit a unique solution $(Y_{n},Z_{n},K_{n})\in \mathfrak{%
\tilde{S}}_{G}^{\alpha }(\Delta \left( 0,T\right) )$ for any $1<\alpha
<\beta $ and $n\in \mathbb{N}.$
\end{lemma}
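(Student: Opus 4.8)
The plan is to observe that, once $Y_{n-1}$ is known, equation (\ref{picard}) is a $G$-BSVIE whose generator no longer depends on the unknown $Y$; hence it falls within the scope of Theorem \ref{dengjia}. Accordingly, I would argue by induction on $n$, the inductive hypothesis being that $(Y_{n-1},Z_{n-1},K_{n-1})\in\mathfrak{\tilde{S}}_G^{\alpha}(\Delta(0,T))$ for every $1<\alpha<\beta$; in particular $Y_{n-1}(\cdot)\in\tilde{M}_G^{\alpha}(0,T)$ for each such $\alpha$. For $n=1$ we have $Y_0\equiv 0$, so the generator reduces to $f(t,s,0,z)$, which satisfies the (simplified) assumptions (H1)--(H4) for $\beta$ because these hold for $f$ with the $y$-slot frozen at $0$; Theorem \ref{dengjia} then yields a unique $(Y_1,Z_1,K_1)\in\mathfrak{\tilde{S}}_G^{\alpha}(\Delta(0,T))$ for all $1<\alpha<\beta$.

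For the inductive step, fix $1<\alpha<\beta$ and pick an auxiliary exponent $\beta'$ with $\alpha<\beta'<\beta$; the drop from $\beta$ to $\beta'$ is forced because $Y_{n-1}$ carries moments only strictly below $\beta$. Put $\tilde{f}_n(t,s,z):=f(t,s,Y_{n-1}(s),z)$, so that (\ref{picard}) becomes
\[
Y_n(t)=\phi(t)+\int_t^T\tilde{f}_n(t,s,Z_n(t,s))\,ds-\int_t^T Z_n(t,s)\,dB_s-K_n(t,T),
\]
an equation of type (\ref{noY}). The Lipschitz assumption (H2) in $z$ passes to $\tilde{f}_n$ verbatim with the same constant $L$. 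From (H2) one has $|\tilde{f}_n(t,s,0)|\le|f(t,s,0,0)|+L|Y_{n-1}(s)|$, which together with (H3) for $f$ and $Y_{n-1}\in M_G^{\beta'}(0,T)$ gives (H3) for $\tilde{f}_n$ at level $\beta'$; the same bound and the standard substitution argument used for $G$-BSDEs (approximating $Y_{n-1}$ by step processes and invoking the Lipschitz continuity of $f$) give (H1) for $\tilde{f}_n$ in $M_G^{\beta'}$. Since $\phi$ is $L_G^{\beta}$-continuous it is a fortiori $L_G^{\beta'}$-continuous, so once (H4) is established Theorem \ref{dengjia} applied with $\beta'$ in place of $\beta$ delivers a unique $(Y_n,Z_n,K_n)\in\mathfrak{\tilde{S}}_G^{\alpha}(\Delta(0,T))$ (using $\alpha<\beta'$); as $\alpha\in(1,\beta)$ was arbitrary, this advances the induction.

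The crux, and the main obstacle, is verifying (H4) for $\tilde{f}_n$, since its $y$-argument $Y_{n-1}(s)$ is unbounded whereas (H4) for $f$ only controls $t$-continuity uniformly on the bounded set $\{|y|+|z|\le N\}$. I would adapt the truncation device from the proof of Lemma \ref{contin_para}, splitting $\int_{t\vee t'}^T|f(t',s,Y_{n-1}(s),z)-f(t,s,Y_{n-1}(s),z)|\,ds$ according to the events $\{|Y_{n-1}(s)|\le M\}$ and $\{|Y_{n-1}(s)|>M\}$. On the first set the integrand is dominated by $\sup_{|y|\le M,\,|z|\le N}|f(t',s,y,z)-f(t,s,y,z)|$, whose $\beta'$-th moment tends to $0$ as $t'\to t$ by (H4) for $f$ at level $M+N$. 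On the second set, (H2) bounds the integrand by $2L|Y_{n-1}(s)|I_{\{|Y_{n-1}(s)|>M\}}+|f(t',s,0,z)-f(t,s,0,z)|$; the last term tends to $0$ as $t'\to t$ by (H4) for $f$ with $y=0$, while the first is controlled uniformly in $t'$ and $|z|\le N$ by the tail estimate
\[
\mathbb{\hat{E}}\left[\left(\int_0^T|Y_{n-1}(s)|I_{\{|Y_{n-1}(s)|>M\}}\,ds\right)^{\beta'}\right]\le M^{-(\rho-1)\beta'}\,T^{\beta'-1}\,\mathbb{\hat{E}}\left[\int_0^T|Y_{n-1}(s)|^{\rho\beta'}\,ds\right],
\]
valid for any $\rho\in(1,\beta/\beta')$ and finite by the inductive hypothesis (since $\rho\beta'<\beta$). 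Taking $\limsup_{t'\to t}$ and then letting $M\to\infty$ yields (H4) for $\tilde{f}_n$, which completes the verification and hence the proof.
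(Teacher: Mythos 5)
Your proposal is correct and follows essentially the same route as the paper: an induction that freezes $Y_{n-1}$ to reduce (\ref{picard}) to the $y$-independent case of Theorem \ref{dengjia} at a lowered exponent ($\beta'<\beta$, the paper's $\alpha+\varepsilon_0$), with (H1)/(H3) obtained from the Lipschitz bound and the substitution (quasi-continuity) argument the paper delegates to Theorem 4.7 of \cite{Hu2016Qc}, and (H4) obtained by truncating $Y_{n-1}$, applying (H4) for $f$ on the enlarged bounded set, and killing the tail via a Chebyshev estimate using the strictly higher moments available from the inductive hypothesis --- exactly the decomposition in the paper's display (\ref{Y_(H4)}). Your use of a truncation level $M$ decoupled from the $z$-bound $N$ (letting $t'\to t$ first, then $M\to\infty$) is a slightly cleaner bookkeeping of the same estimate, but not a different method.
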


\begin{proof}
Assume that Eq.(\ref{picard}) admits a unique solution $%
(Y_{n-1},Z_{n-1},K_{n-1})\in \mathfrak{\tilde{S}}_{G}^{\alpha }(\Delta
\left( 0,T\right) )$ for each $n\in \mathbb{N}$ and $1<\alpha <\beta .$ Then
there exists a constant $\varepsilon _{0}>0$ with $\alpha +\varepsilon
_{0}<\beta $ such that $(Y_{n-1},Z_{n-1},K_{n-1})\in \mathfrak{\tilde{S}}%
_{G}^{\alpha +\varepsilon _{0}}(\Delta \left( 0,T\right) ).$ From (H1)-(H3)
and Theorem 4.7 in \cite{Hu2016Qc}, it is easy to check that for each $z\in 
\mathbb{R}
$ and $t\in \lbrack 0,T],$ $f(t,\cdot ,Y_{n-1}\left( \cdot \right) ,z)\in
M_{G}^{\alpha +\varepsilon _{0}}(t,T)$ and $\underset{t\in \lbrack 0,T]}{%
\sup }\mathbb{\hat{E}}\left[ \int_{t}^{T}\left\vert f(t,s,Y_{n-1}\left(
s\right) ,0)\right\vert ^{\alpha +\varepsilon _{0}}ds\right] <\infty .$
Moreover, for each $\left( t,s\right) ,\left( t^{\prime },s\right) \in
\Delta \left[ 0,T\right] $ and fixed $N\in 
\mathbb{N}
,$ we have%
\begin{equation}
\begin{array}{l}
\displaystyle\mathbb{\hat{E}}\Biggl[\underset{_{\{\left\vert z\right\vert
\leq N\}}}{\sup }\left\vert \int_{t\vee t^{\prime }}^{T}\left\vert
f(t^{\prime },s,Y_{n-1}\left( s\right) ,z)-f(t,s,Y_{n-1}\left( s\right)
,z)\right\vert ds\right\vert ^{\alpha +\varepsilon _{0}}\Biggr] \\ 
\displaystyle\leq C\left\{ \mathbb{\hat{E}}\Biggl[\underset{_{\{\left\vert
z\right\vert \leq N\}}}{\sup }\left\vert \int_{t\vee t^{\prime
}}^{T}\left\vert f(t^{\prime },s,Y_{n-1}\left( s\right) ,z)-f(t^{\prime
},s,Y_{n-1}^{N}\left( s\right) ,z)\right\vert ds\right\vert ^{\alpha
+\varepsilon _{0}}\Biggr]+\mathbb{\hat{E}}\Biggl[\underset{_{\{\left\vert
z\right\vert \leq N\}}}{\sup }\Biggl\vert\int_{t\vee t^{\prime
}}^{T}f(t^{\prime },s,Y_{n-1}^{N}\left( s\right) \Biggr.\Biggr.\right. \\ 
\text{ \ \ \ }\displaystyle\left. \Biggl.\Biggl.,z)-f(t,s,Y_{n-1}^{N}\left(
s\right) ,z)ds\Biggr\vert^{\alpha +\varepsilon _{0}}\Biggr]+\mathbb{\hat{E}}%
\Biggl[\underset{_{\{\left\vert z\right\vert \leq N\}}}{\sup }\left\vert
\int_{t\vee t^{\prime }}^{T}\left\vert f(t,s,Y_{n-1}^{N}\left( s\right)
,z)-f(t,s,Y_{n-1}\left( s\right) ,z)\right\vert ds\right\vert ^{\alpha
+\varepsilon _{0}}\Biggr]\right\} \\ 
\displaystyle\leq C\Biggl\{\mathbb{\hat{E}}\Biggl[\int_{0}^{T}\left\vert
Y_{n-1}\left( s\right) \right\vert ^{\alpha +\varepsilon _{0}}I_{\left\{
\left\vert Y_{n-1}\left( s\right) \right\vert \geq N\right\} }ds\Biggr]+%
\mathbb{\hat{E}}\Biggl[\underset{_{\{\left\vert y\right\vert +\left\vert
z\right\vert \leq 2N\}}}{\sup }\left\vert \int_{t\vee t^{\prime
}}^{T}\left\vert f(t^{\prime },s,y,z)-f(t,s,y,z)\right\vert ds\right\vert
^{\alpha +\varepsilon _{0}}\Biggr]\Biggr\},%
\end{array}
\label{Y_(H4)}
\end{equation}%
where $Y_{n-1}^{N}\left( s\right) =Y_{n-1}\left( s\right) I_{\left\{
\left\vert Y_{n-1}\left( s\right) \right\vert \leq N\right\} }$ and the
constant $C$ depends on $\alpha ,\varepsilon _{0},T,L$. Applying the similar
argument as in Lemma \ref{contin_para} and Theorem \ref{dengjia}, we deduce
from (\ref{Y_(H4)}) that $(Y_{n},Z_{n},K_{n})\in \mathfrak{\tilde{S}}%
_{G}^{\alpha }(\Delta \left( 0,T\right) )$ for each $n\in \mathbb{N}$ and $%
1<\alpha <\beta .$
\end{proof}

In the following, we establish the well-posedness of the $G$-BSVIE (\ref%
{G-BSVIE2}), which is the main result of this subsection.

\begin{theorem}
\label{Y_unique}Suppose that $\phi (\cdot )$ is $L_{G}^{\beta }$-continuous
and $f$ satisfies (H1)-(H4) for some $\beta >1$. Then $G$-BSVIE (\ref%
{G-BSVIE2}) admits a unique solution $(Y,Z,K)\in \mathfrak{\tilde{S}}%
_{G}^{\alpha }(\Delta \left( 0,T\right) )$ for any $1<\alpha <\beta .$
Moreover, $Y(\cdot ),K(\cdot ,T)$ are $L_{G}^{\alpha }$-continuous and $%
Z\left( \cdot ,\cdot \right) $ is $H_{G}^{\alpha }$-continuous$.$
\end{theorem}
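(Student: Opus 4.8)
The plan is to obtain existence by showing that the Picard sequence $(Y_n,Z_n,K_n)$ of (\ref{picard}), which is well-posed in $\mathfrak{\tilde{S}}_{G}^{\alpha}(\Delta(0,T))$ by Lemma \ref{uni_picard}, is Cauchy, and to identify its limit with a solution of (\ref{G-BSVIE2}); uniqueness and continuity are then reduced to the already-established $Y$-independent theory. Fix $1<\alpha<\beta$ and choose $\delta>0$ small enough that $\alpha+\delta<\beta$ and each iterate lies in $\mathfrak{\tilde{S}}_{G}^{\alpha+\delta}(\Delta(0,T))$ (possible by Lemma \ref{uni_picard}), so that all the $Y$-estimates may be run at the exponent $\alpha+\delta$.

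First I would regard, for each fixed $n$, the process $Y_{n}$ as the first component of the $Y$-independent equation (\ref{noY}) with frozen generator $\tilde{f}_{n}(t,s,z)=f(t,s,Y_{n-1}(s),z)$. Applying the difference estimate of Proposition \ref{noYguji2} to two consecutive iterates, with common $\phi$ (so $\hat{\phi}\equiv0$) and using (H2) to bound $|\hat{f}_{n}(t,s)|\le L|Y_{n-1}(s)-Y_{n-2}(s)|$, then taking $\mathbb{\hat{E}}$ and using the tower property together with $\mathbb{\hat{E}}[\int_{t}^{T}\cdot\,ds]\le\int_{t}^{T}\mathbb{\hat{E}}[\cdot]\,ds$, I obtain for $v_{n}(t):=\mathbb{\hat{E}}[|Y_{n}(t)-Y_{n-1}(t)|^{\alpha+\delta}]$ the recursion
\begin{equation*}
v_{n}(t)\le C\int_{t}^{T}v_{n-1}(s)\,ds ,
\end{equation*}
which iterates to the factorial bound $\sup_{t}v_{n}(t)\le\frac{(CT)^{n-1}}{(n-1)!}\sup_{t}v_{1}(t)$, the initial bound $\sup_{t}v_{1}(t)<\infty$ coming from Proposition \ref{Yguji1}. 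Hence $\{Y_{n}\}$ is Cauchy in $\tilde{M}_{G}^{\alpha+\delta}(0,T)$. Feeding this back into the $Z$- and $K$-parts of Proposition \ref{noYguji2}, where the generator difference enters at exponent $\alpha+\delta$ and therefore controls also the square-root cross term, shows that $\{Z_{n}\}$ is Cauchy in $H_{G}^{\alpha}(\Delta(0,T))$ and $\{K_{n}(t,T)\}$ is Cauchy in $L_{G}^{\alpha}$.

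Denoting the limits by $(Y,Z,K)$, I would pass to the limit in (\ref{picard}): the Lebesgue term converges by (H2), the stochastic integral by the BDG-type inequality (\ref{BDG}), and the remaining term defines $K(t,T)$. To confirm $(Y,Z,K)\in\mathfrak{\tilde{S}}_{G}^{\alpha}(\Delta(0,T))$ — in particular that $K(t,\cdot)$ is a decreasing $G$-martingale — and to obtain the three continuity statements, I would freeze the limit and invoke Theorem \ref{dengjia}: the generator $\tilde{f}(t,s,z)=f(t,s,Y(s),z)$ satisfies (H1)--(H4), the (H4) property being established exactly as in (\ref{Y_(H4)}) with $Y$ in place of $Y_{n-1}$, so the $Y$-independent equation (\ref{noY}) with this generator has a unique solution in $\mathfrak{\tilde{S}}_{G}^{\alpha}(\Delta(0,T))$ whose components enjoy the asserted continuity. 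Identifying that solution with $(Y,Z,K)$ through the uniqueness of Theorem \ref{dengjia} transfers all the structural and continuity properties to the limit.

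For uniqueness, let $(Y^{1},Z^{1},K^{1})$ and $(Y^{2},Z^{2},K^{2})$ be two solutions. Applying the estimate (\ref{Yguji2_1}) of Proposition \ref{Yguji2} with $f^{1}=f^{2}=f$ and $\phi^{1}=\phi^{2}=\phi$ makes both $\hat{\phi}$ and $\hat{f}$ vanish, forcing $\mathbb{\hat{E}}[\int_{0}^{T}|Y^{1}(t)-Y^{2}(t)|^{\alpha}dt]=0$ and hence $Y^{1}=Y^{2}$; the two triplets then solve the same frozen equation (\ref{noY}), and the uniqueness part of Theorem \ref{dengjia} yields $Z^{1}=Z^{2}$ and $K^{1}=K^{2}$. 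I expect the main obstacle to be the convergence of the two-time-parameter components $Z_{n}$ and $K_{n}$: unlike a classical BSDE fixed point, each Picard step is itself a whole $t$-indexed family of $G$-BSDEs, the a priori difference estimates carry a square-root cross term that only closes after passing to the higher exponent $\alpha+\delta$, and one must keep the limit inside the nonlinear space $\tilde{H}_{G}^{\alpha}(\Delta(0,T))$ while preserving the decreasing $G$-martingale structure of $K$ under the limit.
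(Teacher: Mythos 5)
Your proposal is correct, but its existence part takes a genuinely different route from the paper's. The paper never iterates globally: it fixes $\delta=\frac{1}{\sqrt[\alpha]{2C}L}$, runs the Picard scheme (\ref{picard1}) only on $(T-\delta,T]$, where the integrated estimate (\ref{T0process}) yields a contraction with factor $\frac{1}{2}$, then proceeds backward interval by interval, splicing the already-constructed $Y$ into the generator through indicator functions as in (\ref{picard2}), and finally pastes the pieces together in (\ref{Y_exist})--(\ref{K_exist}). You instead run a single global iteration and extract, from the pointwise estimate of Proposition \ref{noYguji2} combined with the tower property, H\"{o}lder's inequality and $\mathbb{\hat{E}}[\int_t^T\cdot\,ds]\le\int_t^T\mathbb{\hat{E}}[\cdot]\,ds$, the Volterra recursion $v_n(t)\le C\int_t^T v_{n-1}(s)\,ds$, whence factorial decay; this is legitimate because the constant $C$ depends on the frozen generators only through the common Lipschitz constant $L$, and Lemma \ref{uni_picard} keeps every iterate in $\mathfrak{\tilde{S}}_{G}^{\alpha'}(\Delta\left(0,T\right))$ for all $\alpha'<\beta$, so the recursion may be run at exponent $\alpha+\delta<\beta$. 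Indeed your recursion is precisely the estimate the paper itself uses in its uniqueness argument, so your proof shows that the local-interval machinery is not forced by the Volterra structure. As for what each approach buys: yours dispenses with the indicator-spliced generators and with the (never explicitly verified) check that the pasted triplet solves the equation across interval boundaries; the paper's local scaffolding, on the other hand, is reused verbatim in Section 4, where Theorem \ref{compar_Y_incre} propagates the comparison backward interval by interval, so under your scheme that theorem would have to be reproved by a global monotone iteration (which works, but is a separate adaptation). One small repair to your write-up: do not identify the limit with the frozen-equation solution ``through the uniqueness of Theorem \ref{dengjia}'', since invoking that uniqueness for the limit triplet presupposes it already lies in $\mathfrak{\tilde{S}}_{G}^{\alpha}(\Delta\left(0,T\right))$ --- in particular that $K(t,\cdot)$ is a decreasing $G$-martingale, which is the very point at issue. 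Instead, let $(\tilde{Y},\tilde{Z},\tilde{K})$ be the solution Theorem \ref{dengjia} produces for the generator $f(t,s,Y(s),z)$ and show $\tilde{Y}=Y$ by the stability estimate $\mathbb{\hat{E}}[|Y_n(t)-\tilde{Y}(t)|^{\alpha}]\le C\int_t^T\mathbb{\hat{E}}[|Y_{n-1}(s)-Y(s)|^{\alpha}]\,ds\rightarrow 0$; then $(\tilde{Y},\tilde{Z},\tilde{K})$ itself solves (\ref{G-BSVIE2}) with all the structural and continuity properties, and the convergence of $Z_n$ and $K_n$ is not needed at all. This is the same gloss the paper makes when it says $Y$ ``can be regarded as a known process'', and with it your argument is complete; your uniqueness step via Proposition \ref{Yguji2} with identical data coincides in substance with the paper's, the Gronwall argument being merely absorbed into that proposition.
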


\begin{proof}
\textbf{Uniqueness.} Let $\left( Y_{1},Z_{1},K_{1}\right) ,$ $\left(
Y_{2},Z_{2},K_{2}\right) $ be two solutions of $G$-BSVIE (\ref{G-BSVIE2}).
Since $Y_{i},$ $i=1,2$ are known processes, we get from Proposition \ref%
{noYguji2} that%
\begin{equation*}
\begin{array}{l}
\displaystyle\mathbb{\hat{E}}\left[ \left\vert Y_{1}\left( t\right)
-Y_{2}\left( t\right) \right\vert ^{\alpha }\right] \\ 
\displaystyle\leq C\mathbb{\hat{E}}\left[ \left( \int_{t}^{T}\left\vert
f(t,s,Y_{1}\left( s\right) ,Z_{2}(t,s))-f(t,s,Y_{2}\left( s\right)
,Z_{2}(t,s))\right\vert ds\right) ^{\alpha }\right] \\ 
\displaystyle\leq C\int_{t}^{T}\mathbb{\hat{E}}\left[ \left\vert Y_{1}\left(
s\right) -Y_{2}\left( s\right) \right\vert ^{\alpha }\right] ds,%
\end{array}%
\end{equation*}%
where $C$ depends on $\alpha ,T,\underline{\sigma },L.$ Applying Gronwall's
inequality yields that $\left\Vert Y_{1}-Y_{2}\right\Vert _{L_{G}^{\alpha
}}=0.$ Then we have $\left\Vert Y_{1}-Y_{2}\right\Vert _{M_{G}^{\alpha }}=0,$
which implies that $Y_{1}=Y_{2}.$ Furthermore, by Proposition \ref{Yguji2},
we get $Z_{1}=Z_{2},$ $K_{1}=K_{2}$.\newline
\textbf{Existence.} Let $\delta $ be an undetermined constant satisfying $%
0<\delta \leq T.$ By using a different backward iteration procedure, we
prove the existence of the solution to $G$-BSVIE (\ref{G-BSVIE2}) on each
local interval.

Part 1: $\left( T-\delta ,T\right] $. Define picard sequences on $\left(
T-\delta ,T\right] $ as follows:%
\begin{equation}
Y_{n}^{T-\delta }\left( t\right) =\phi \left( t\right) +\int_{t}^{T}f\left(
t,s,Y_{n-1}^{T-\delta }\left( s\right) ,Z_{n}^{T-\delta }\left( t,s\right)
\right) ds-\int_{t}^{T}Z_{n}^{T-\delta }\left( t,s\right) dB_{s}-K_{n}(t,T),%
\text{ }n\in \mathbb{N}  \label{picard1}
\end{equation}%
and $Y_{0}^{T-\delta }\left( t\right) =0.$ From Lemma \ref{uni_picard}$,$ (%
\ref{picard1}) admits a unique solution $(Y_{n}^{T-\delta },Z_{n}^{T-\delta
},K_{n}^{T-\delta })\in \mathfrak{\tilde{S}}_{G}^{\alpha }(\Delta \left(
T-\delta ,T\right) )$ for each $n\in \mathbb{N}$ and $1<\alpha <\beta .$
Thus, it follows from the estimates in Proposition \ref{noYguji2} that%
\begin{equation}
\begin{array}{l}
\displaystyle\mathbb{\hat{E}}\left[ \int_{T-\delta }^{T}\left\vert
Y_{n}^{T-\delta }(t)-Y_{n-1}^{T-\delta }(t)\right\vert ^{\alpha }dt\right] 
\\ 
\displaystyle\leq C\int_{T-\delta }^{T}\mathbb{\hat{E}}\left[ \left(
\int_{T-\delta }^{T}\left\vert f\left( t,s,Y_{n-1}^{T-\delta }\left(
s\right) ,Z_{n-2}^{T-\delta }\left( t,s\right) \right) -f\left(
t,s,Y_{n-2}^{T-\delta }\left( s\right) ,Z_{n-2}^{T-\delta }\left( t,s\right)
\right) \right\vert ds\right) ^{\alpha }\right] dt \\ 
\displaystyle\leq CL^{\alpha }\delta ^{\alpha }\mathbb{\hat{E}}\left[
\int_{T-\delta }^{T}\left\vert Y_{n-1}^{T-\delta }(t)-Y_{n-2}^{T-\delta
}(t)\right\vert ^{\alpha }dt\right] ,%
\end{array}
\label{T0process}
\end{equation}%
where the constant $C=C\left( \alpha ,T,L,\underline{\sigma }\right) .$ Set $%
\delta =\frac{1}{\sqrt[\alpha ]{2C}L},\ $it holds that there exists a
constant $k=\left[ \frac{T}{\delta }\right] \in \mathbb{N}$ such that $%
k\delta \geq T.$ From (\ref{T0process}) and Proposition \ref{noYguji1}, we
have%
\begin{equation}
\mathbb{\hat{E}}\left[ \int_{T-\delta }^{T}\left\vert Y_{n}^{T-\delta
}(t)-Y_{n-1}^{T-\delta }(t)\right\vert ^{\alpha }dt\right] \leq \frac{1}{2}%
\mathbb{\hat{E}}\left[ \int_{T-\delta }^{T}\left\vert Y_{n-1}^{T-\delta
}\left( t\right) -Y_{n-2}^{T-\delta }\left( t\right) \right\vert ^{\alpha }dt%
\right] \leq \frac{1}{2^{n-1}}\mathbb{\hat{E}}\left[ \int_{T-\delta
}^{T}\left\vert Y_{1}^{T-\delta }\left( t\right) \right\vert ^{\alpha }dt%
\right]   \label{T1process}
\end{equation}%
and 
\begin{equation*}
\mathbb{\hat{E}}\left[ \int_{T-\delta }^{T}\left\vert Y_{1}^{T-\delta
}\left( t\right) \right\vert ^{\alpha }dt\right] \leq C\underset{t\in \left[
0,T\right] }{\sup }\mathbb{\hat{E}}\left[ \left\vert \phi \left( t\right)
\right\vert ^{\alpha }+\left( \int_{t}^{T}\left\vert f\left( t,s,0,0\right)
\right\vert ds\right) ^{\alpha }\right] \leq C,
\end{equation*}%
where $C$ depends on $\alpha ,T,\underline{\sigma },L.$ Letting $%
n\rightarrow \infty ,$ we deduce that $\left\{ Y_{n}^{T-\delta }\right\}
_{n\in \mathbb{N}}$ is a Cauchy sequence in $M_{G}^{\alpha }\left( T-\delta
,T\right) .$ In a similar manner, we can easily verify that $\left\{
Y_{n}^{T-\delta }\left( t\right) \right\} _{n\in \mathbb{N}}$ is also a
Cauchy sequence in $L_{G}^{\alpha }\left( \Omega _{t}\right) $ for each $%
t\in \left( T-\delta ,T\right] .$ Therefore, there exist a process $Y(\cdot )
$ such that 
\begin{equation*}
\underset{n\rightarrow \infty }{\lim }\mathbb{\hat{E}}\left[ \int_{T-\delta
}^{T}\left\vert Y_{n}^{T-\delta }(t)-Y(t)\right\vert ^{\alpha }dt\right] =0%
\text{ and }\underset{n\rightarrow \infty }{\lim }\mathbb{\hat{E}}\left[
\left\vert Y_{n}^{T-\delta }\left( t\right) -Y(t)\right\vert ^{\alpha }%
\right] =0\text{, for each }t\in \left( T-\delta ,T\right] ,
\end{equation*}%
which implies that $G$-BSVIE (\ref{G-BSVIE2}) admits a solution $Y\in \tilde{%
M}_{G}^{\alpha }\left( T-\delta ,T\right) .$ It follows that $Y$ can be
regarded as a known process. Then by Theorem \ref{dengjia}, $G$-BSVIE (\ref%
{G-BSVIE2}) has a unique solution $(Y,Z,K)\in \mathfrak{\tilde{S}}%
_{G}^{\alpha }(\Delta \left( T-\delta ,T\right) ).$ Hence, we obtain the
values $(Y\left( t\right) I_{\left( T-\delta ,T\right] }\left( t\right)
,Z\left( t,s\right) I_{\Delta \left( T-\delta ,T\right] }\left( t,s\right)
,K\left( t,s\right) $$I_{\Delta \left( T-\delta ,T\right] }\left( t,s\right)
).$ Moreover, by Propositon \ref{Yguji1}, we derive 
\begin{equation*}
\left\Vert Y\right\Vert _{M_{G}^{\alpha }}\leq C\left( \alpha ,T,\underline{%
\sigma },L\right) \text{ and }\left\Vert Y\right\Vert _{L_{G}^{\alpha }}\leq
C\left( \alpha ,T,\underline{\sigma },L\right) .
\end{equation*}

Part 2: $\left( T-2\delta ,T-\delta \right] .$ Based on the value $Y(\cdot )$
on $\left( T-\delta ,T\right] ,$ we construct the following picard sequence$.
$ Define%
\begin{equation}
\begin{array}{ll}
\displaystyle Y_{n}^{T-2\delta }\left( t\right) = & \displaystyle\phi \left(
t\right) +\int_{t}^{T}f\left( t,s,Y(s)I_{(T-\delta ,T]}\left( s\right)
+Y_{n-1}^{T-2\delta }\left( s\right) I_{\left[ t,T-\delta \right] }\left(
s\right) ,Z_{n}^{T-2\delta }\left( t,s\right) \right) ds \\ 
& \displaystyle-\int_{t}^{T}Z_{n}^{T-2\delta }\left( t,s\right)
dB_{s}-K_{n}^{T-2\delta }(t,T),\text{ }n\in \mathbb{N}%
\end{array}
\label{picard2}
\end{equation}%
and $Y_{0}^{T-2\delta }\left( t\right) =0.$ Similarly, by Lemma \ref%
{uni_picard}, the $G$-BSVIEs (\ref{picard2}) admit a unique solution $%
(Y_{n}^{T-2\delta },Z_{n}^{T-2\delta },$\newline
$K_{n}^{T-2\delta })\in \mathfrak{\tilde{S}}_{G}^{\alpha }(\Delta \left(
T-2\delta ,T-\delta \right) )$ for each $n\in \mathbb{N}$. Following the
similar proof of (\ref{T1process}), we can obtain from Proposition \ref%
{noYguji1} and Proposition \ref{noYguji2} that%
\begin{equation}
\begin{array}{l}
\displaystyle\mathbb{\hat{E}}\left[ \int_{T-2\delta }^{T-\delta }\left\vert
Y_{n}^{T-2\delta }(t)-Y_{n-1}^{T-2\delta }(t)\right\vert ^{\alpha }dt\right] 
\\ 
\displaystyle\leq C\Biggl\{\int_{T-2\delta }^{T-\delta }\mathbb{\hat{E}}%
\Biggl[\Biggl(\int_{T-2\delta }^{T}\Bigl\vert f\Bigl(t,s,Y(s)I_{(T-\delta
,T]}(s)+Y_{n-1}^{T-2\delta }(s)I_{[t,T-\delta ]}(s),Z_{n-1}^{T-2\delta }(t,s)%
\Bigr) \\ 
\text{ \ }\displaystyle-f\Bigl(t,s,Y(s)I_{(T-\delta
,T]}(s)+Y_{n-2}^{T-2\delta }(s)I_{[t,T-\delta ]}(s),Z_{n-1}^{T-2\delta }(t,s)%
\Bigr)\Bigr\vert ds\Biggr)^{\alpha }\Biggr]dt\Biggr\} \\ 
\displaystyle\leq \frac{1}{2}\mathbb{\hat{E}}\left[ \int_{T-2\delta
}^{T-\delta }\left\vert Y_{n-1}^{T-2\delta }\left( t\right)
-Y_{n-2}^{T-2\delta }\left( t\right) \right\vert ^{\alpha }dt\right] \leq 
\frac{1}{2^{n-1}}\mathbb{\hat{E}}\left[ \int_{T-2\delta }^{T-\delta
}\left\vert Y_{1}^{T-2\delta }\left( t\right) \right\vert ^{\alpha }dt\right]
\end{array}
\label{Y_unique_T2}
\end{equation}%
and%
\begin{equation*}
\begin{array}{l}
\displaystyle\mathbb{\hat{E}}\left[ \int_{T-2\delta }^{T-\delta }\left\vert
Y_{1}^{T-2\delta }\left( t\right) \right\vert ^{\alpha }dt\right]  \\ 
\displaystyle\leq \int_{T-2\delta }^{T-\delta }\mathbb{\hat{E}}\left[
\left\vert \phi \left( t\right) \right\vert ^{\alpha }+\left(
\int_{t}^{T}\left\vert f\left( t,s,Y(s)I_{\left[ T-\delta ,T\right] }\left(
s\right) ,0\right) \right\vert ds\right) ^{\alpha }\right] dt \\ 
\displaystyle\leq C\left\{ \underset{t\in \left[ 0,T\right] }{\sup }\mathbb{%
\hat{E}}\left[ \left\vert \phi \left( t\right) \right\vert ^{\alpha }+\left(
\int_{t}^{T}\left\vert f\left( t,s,0,0\right) \right\vert ds\right) ^{\alpha
}\right] +\mathbb{\hat{E}}\left[ \int_{T-\delta }^{T}\left\vert
Y(s)\right\vert ^{\alpha }ds\right] \right\} \leq C,%
\end{array}%
\end{equation*}%
\newline
where $C$ is a constant depending on $\alpha ,T,\underline{\sigma },L.$
Similarly, there exists a process $Y$ such that $Y_{n}^{T-2\delta }$
converges to $Y$ in $\tilde{M}_{G}^{\alpha }\left( T-2\delta ,T-\delta
\right) .$ By Theorem \ref{dengjia}, $G$-BSVIE (\ref{G-BSVIE2}) has a
solution $\left( Y,Z,K\right) \in \mathfrak{\tilde{S}}_{G}^{\alpha }(\Delta
\left( T-2\delta ,T-\delta \right) )$. Then, the values of $(Y\left(
t\right) I_{\left( T-2\delta ,T-\delta \right] }\left( t\right) ,Z\left(
t,s\right) I_{\Delta \left( T-2\delta ,T-\delta \right] }\left( t,s\right)
,K\left( t,s\right) $$I_{\Delta \left( T-2\delta ,T-\delta \right] }\left(
t,s\right) )$ are derived.

The rest of the proof of existence is similar to the case when $t\in \left(
T-2\delta ,T-\delta \right] .$ Arguing by induction, we obtian that $G$%
-BSVIE (\ref{G-BSVIE2}) admits a solution $\left( Y,Z,K\right) \in \mathfrak{%
\tilde{S}}_{G}^{\alpha }(\Delta \left( T-\left( i+1\right) \delta ,T-i\delta
\right) )$ for each $0\leq i\leq k-1.\ $Set 
\begin{equation}
Y\left( t\right) =\overset{k-2}{\underset{i=0}{\sum }}Y\left( t\right)
I_{\left( T-(i+1)\delta ,T-i\delta \right] }\left( t\right) +Y\left(
t\right) I_{\left[ 0,T-\left( k-1\right) \delta \right] }\left( t\right) ,
\label{Y_exist}
\end{equation}%
\begin{equation}
Z\left( t,s\right) =\overset{k-2}{\underset{i=0}{\sum }}Z\left( t,s\right)
I_{\Delta \left( T-(i+1)\delta ,T-i\delta \right] }\left( t,s\right)
+Z\left( t,s\right) I_{\Delta \left[ 0,T-\left( k-1\right) \delta \right]
}\left( t,s\right)  \label{Z_exist}
\end{equation}%
and 
\begin{equation}
K\left( t,s\right) =\overset{k-2}{\underset{i=0}{\sum }}K\left( t,s\right)
I_{\Delta \left( T-(i+1)\delta ,T-i\delta \right] }\left( t,s\right)
+K\left( t,s\right) _{\Delta \left[ 0,T-\left( k-1\right) \delta \right]
}\left( t,s\right) .  \label{K_exist}
\end{equation}%
Then we prove the existence of the solution to the $G$-BSVIE (\ref{G-BSVIE2}%
) on entire interval $\left[ 0,T\right] $.

\textbf{Continuity. }Let $\left( Y,Z,K\right) $ be the unique adapted
solution of $G$-BSVIE (\ref{G-BSVIE2}) (see (\ref{Y_exist})-(\ref{K_exist}%
)). Define the following $G$-BSDEs parameterized by each fixed $t\in \lbrack
0,T]$:%
\begin{equation}
\lambda (t,r)=\phi (t)+\int_{r}^{T}f(t,s,Y\left( s\right) ,\mu
(t,s))ds-\int_{r}^{T}\mu (t,s)dB_{s}-\left( \eta (t,T)-\eta (t,r)\right)
,\quad (t,r)\in \Delta \lbrack 0,T].  \label{paraY}
\end{equation}%
By Theorem \ref{Djie}, we derive that (\ref{paraY}) has a unique solution $%
(\lambda (t,\cdot ),\mu (t,\cdot ),\eta (t,\cdot ))\in \mathfrak{S}%
_{G}^{\alpha }(t,T)$ and $\eta (t,t)=0$ for each fixed $t\in \lbrack 0,T].$
Similar to the analysis as in Theorem \ref{dengjia} and (\ref{Y_(H4)}), we
derive 
\begin{equation*}
\left( Y(t),Z(t,s),K(t,s)\right) =\left( \lambda (t,t),\mu (t,s),\eta
(t,s)\right) ,\text{ for each }\left( t,s\right) \in \Delta \lbrack 0,T].
\end{equation*}%
Moreover, the continuity of $\lambda (\cdot ,\cdot )$ and $\mu \left( \cdot
,\cdot \right) $ can be obtained from Lemma \ref{contin_para} similarly.
Therefore, following the proof of (\ref{noYcontin})-(\ref{noYK_contin}), we
get the continuity of $Y(\cdot )$, $Z\left( \cdot ,\cdot \right) $ and $%
K\left( \cdot ,T\right) .$
\end{proof}

\section{Comparison of Backward Stochastic Volterra integral equations
driven by $G$-Brownian motion}

\noindent

In this section, we establish the comparison theorem for $G$-BSVIE (\ref%
{G-BSVIE2}). Following the idea of Section 3, we shall first consider the
comparison theorem for $G$-BSVIE (\ref{noY}), where the generator $f$ is
independent of $y$.

\begin{proposition}
\label{compar_noY}Let $(Y^{i},Z^{i},K^{i})\in \mathfrak{\tilde{S}}%
_{G}^{\alpha }(\Delta \left( 0,T\right) )$ be the solution of $G$-BSVIE of
the form:%
\begin{equation}
Y^{i}(t)=\phi
^{i}(t)+\int_{t}^{T}f^{i}(t,s,Z^{i}(t,s))ds-%
\int_{t}^{T}Z^{i}(t,s)dB_{s}-K^{i}(t,T),\quad t\in \lbrack 0,T],
\label{compar_noY1}
\end{equation}%
where $\phi ^{i}(\cdot )$ is $L_{G}^{\beta }$-continuous and $f^{i}$
satisfies (H1)-(H4) for any $1<\alpha <\beta $ and $i=1,2$. If $\phi
^{1}\geq \phi ^{2}$ and $f^{1}\geq f^{2},$ then $Y^{1}\geq Y^{2}$.
\end{proposition}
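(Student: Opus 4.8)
The plan is to exploit the fact that, when the generator is independent of $y$, the $G$-BSVIE (\ref{compar_noY1}) decouples into a family of ordinary $G$-BSDEs indexed by the first time variable $t$, to which the $G$-BSDE comparison theorem (Theorem \ref{Dcompar}) applies directly. By Remark \ref{eqdengjia} together with the construction in the proof of Theorem \ref{dengjia}, for each fixed $t\in[0,T]$ the triplet $(\lambda^i(t,\cdot),\mu^i(t,\cdot),\eta^i(t,\cdot))$ given by $\lambda^i(t,r)=Y^i(t)-\int_t^r f^i(t,s,Z^i(t,s))ds+\int_t^r Z^i(t,s)dB_s+K^i(t,r)$, $\mu^i(t,r)=Z^i(t,r)$, $\eta^i(t,r)=K^i(t,r)$, solves the parameterized $G$-BSDE (\ref{para}) with terminal value $\phi^i(t)$ and generator $g^i(s,z):=f^i(t,s,z)$, and satisfies $Y^i(t)=\lambda^i(t,t)$ with $\eta^i(t,t)=0$.

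First I would fix $t\in[0,T]$ and view (\ref{para}) as a genuine $G$-BSDE on $[t,T]$ in the running variable $r$. Since each $f^i$ satisfies (H1)--(H4) for some $\beta>1$, the generators $g^i(s,z)=f^i(t,s,z)$ inherit the standing assumptions (A1)--(A2) of the $G$-BSDE theory; hence $(\lambda^i(t,\cdot),\mu^i(t,\cdot),\eta^i(t,\cdot))\in\mathfrak{S}_G^{\alpha}(t,T)$ is exactly the unique solution guaranteed by Theorem \ref{Djie}. This identifies the two comparison inputs at level $t$: the terminal values $\phi^1(t),\phi^2(t)$ and the generators $g^1,g^2$.

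Next I would verify the two monotonicity hypotheses for this fixed $t$. The terminal condition $\phi^1\geq\phi^2$ yields $\phi^1(t)\geq\phi^2(t)$ q.s., and the generator condition $f^1\geq f^2$ yields $g^1(s,z)=f^1(t,s,z)\geq f^2(t,s,z)=g^2(s,z)$ for all $(s,z)$. Applying Theorem \ref{Dcompar} to the pair of $G$-BSDEs (\ref{para}) on $[t,T]$ then gives $\lambda^1(t,r)\geq\lambda^2(t,r)$ q.s. for every $r\in[t,T]$. Setting $r=t$ and using $Y^i(t)=\lambda^i(t,t)$ produces $Y^1(t)\geq Y^2(t)$ q.s.; since $t\in[0,T]$ is arbitrary, this establishes $Y^1\geq Y^2$.

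The argument carries no genuine obstacle: the $y$-independence of the generator splits the Volterra equation into decoupled $G$-BSDEs, so the whole statement reduces to a single invocation of Theorem \ref{Dcompar} at each fixed $t$. The only point requiring a moment's care is checking that the parameterized generators $g^i$ inherit (A1)--(A2), which follows immediately from (H1)--(H3). The real difficulty is deferred to the $y$-dependent comparison theorem for (\ref{G-BSVIE2}), where this clean decoupling fails and one must instead iterate the comparison over the local intervals $\left(T-(i+1)\delta,T-i\delta\right]$ built in Theorem \ref{Y_unique}, propagating the ordering through the monotonicity of $f$ in $y$ as outlined in the introduction.
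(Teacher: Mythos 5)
Your proposal is correct and follows essentially the same route as the paper: both decouple the $y$-independent $G$-BSVIE into the family of parameterized $G$-BSDEs (\ref{para}) via the identification $Y^i(t)=\lambda^i(t,t)$ (Remark \ref{eqdengjia} / Theorem \ref{dengjia}), apply the $G$-BSDE comparison theorem (Theorem \ref{Dcompar}) for each fixed $t$, and conclude by setting $r=t$. Your explicit check that the parameterized generators inherit (A1)--(A2) is a point the paper leaves implicit, but it is the same argument.
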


\begin{proof}
First of all$,$ we consider the following $G$-BSDEs parameterized by each
fixed $t\in \lbrack 0,T]$ for $i=1,2:$%
\begin{equation*}
\lambda ^{i}(t,r)=\phi ^{i}(t)+\int_{r}^{T}f^{i}(t,s,\mu
^{i}(t,s))ds-\int_{r}^{T}\mu ^{i}(t,s)dB_{s}-\left( \eta ^{i}(t,T)-\eta
^{i}(t,r)\right) ,\quad (t,r)\in \Delta \left[ 0,T\right] .
\end{equation*}%
By Theorem \ref{Dcompar} we have $\lambda ^{1}(t,r)\geq \lambda ^{2}(t,r),$ $%
r\in \lbrack t,T].$ Recalling Theorem \ref{dengjia}, since we have%
\begin{equation*}
Y^{i}(t)=\lambda ^{i}(t,t),\text{ for each }t\in \left[ 0,T\right] \text{
and }i=1,2,
\end{equation*}%
it follows that $Y^{1}\left( t\right) \geq Y^{2}\left( t\right) $ by taking $%
r=t$.
\end{proof}

Next, we turn to the comparison theorem for $G$-BSVIE (\ref{G-BSVIE2}), and
the main results are presented as follows.

\begin{theorem}
\label{compar_Y_incre}Let $(Y^{i},Z^{i},K^{i})\in \mathfrak{\tilde{S}}%
_{G}^{\alpha }(\Delta \left( 0,T\right) )$ be the solution of $G$-BSVIE of
the form: 
\begin{equation}
Y^{i}(t)=\phi
^{i}(t)+\int_{t}^{T}f^{i}(t,s,Y^{i}(s),Z^{i}(t,s))ds-%
\int_{t}^{T}Z^{i}(t,s)dB_{s}-K^{i}(t,T),\quad t\in \lbrack 0,T],
\label{compar_Y_incre2}
\end{equation}%
where $\phi ^{i}(\cdot )$ is $L_{G}^{\beta }$-continuous and $f^{i}$
satisfies (H1)-(H4) for $i=1,2$ with any $1<\alpha <\beta .$ Moreover, there
exists $i=1$ or $2$ such that 
\begin{equation}
f^{i}\left( t,s,y_{1},z\right) \geq f^{i}\left( t,s,y_{2},z\right) \text{
for each }y_{1}\geq y_{2},\text{ }\left( t,s,y_{1},z\right) ,\left(
t,s,y_{2},z\right) \in \Delta \left[ 0,T\right] \times \mathbb{R}^{2}.
\label{compar_fi1}
\end{equation}%
If $\phi ^{1}\geq \phi ^{2}$ and 
\begin{equation}
f^{1}\left( t,s,y,z\right) \geq f^{2}\left( t,s,y,z\right) \text{ for each }%
\left( t,s,y,z\right) \in \Delta \left[ 0,T\right] \times \mathbb{R}^{2},
\label{compar_fi2}
\end{equation}%
then we have $Y^{1}\left( t\right) \geq Y^{2}\left( t\right) ,$ q.s., $t\in %
\left[ 0,T\right] .$

\begin{proof}
Without loss of generality, we assume that condition (\ref{compar_fi1})
holds for $i=1.$ Let $\delta $ be an undetermined constant satisfying $%
0<\delta \leq T.$ Next, we prove the comparison theorem for $G$-BSVIE (\ref%
{G-BSVIE2}) by induction on each local interval.

Part 1: $\left( T-\delta ,T\right] .$ Following the idea of Theorem \ref%
{Y_unique}, we define the $G$-BSVIEs for each $t\in \left( T-\delta ,T\right]
$, 
\begin{equation}
\bar{Y}_{n}^{T-\delta }(t)=\phi ^{1}(t)+\int_{t}^{T}f^{1}(t,s,\bar{Y}%
_{n-1}^{T-\delta }(s),\bar{Z}_{n}^{T-\delta }(t,s))ds-\int_{t}^{T}\bar{Z}%
_{n}^{T-\delta }(t,s)dB_{s}-\bar{K}_{n}^{T-\delta }(t,T)  \label{compar_T1}
\end{equation}%
and $\bar{Y}_{0}^{T-\delta }(t)=Y^{2}(t),$ which together with Lemma \ref%
{uni_picard} implies that there admits a unique solution $\left( \bar{Y}%
_{n}^{T-\delta },\bar{Z}_{n}^{T-\delta },\bar{K}_{n}^{T-\delta }\right) \in 
\mathfrak{\tilde{S}}_{G}^{\alpha }(\Delta \left( T-\delta ,T\right) )$ for
each $n\in \mathbb{N}.$ For $n=1$ in (\ref{compar_T1}), since we have $\phi
^{1}\geq \phi ^{2}$ and 
\begin{equation*}
f^{1}(t,s,\bar{Y}_{0}^{T-\delta }(s),z)=f^{1}(t,s,Y^{2}(s),z)\geq
f^{2}(t,s,Y^{2}(s),z),\text{ for each }\left( t,s\right) \in \Delta \left(
T-\delta ,T\right] ,z\in \mathbb{R},
\end{equation*}%
we derive from Proposition \ref{compar_noY} that 
\begin{equation*}
\bar{Y}_{1}^{T-\delta }(t)\geq Y^{2}\left( t\right) =\bar{Y}_{0}^{T-\delta
}(t),\text{ for each }t\in \left( T-\delta ,T\right] .
\end{equation*}%
For $n=2$ in (\ref{compar_T1})$,$ since $f^{1}$ is non-decreasing in $y,$ we
have 
\begin{equation*}
f^{1}(t,s,\bar{Y}_{1}^{T-\delta }(s),z)\geq f^{1}(t,s,\bar{Y}_{0}^{T-\delta
}\left( s\right) ,z),\text{ for each }\left( t,s\right) \in \Delta \left(
T-\delta ,T\right] ,z\in \mathbb{R}.
\end{equation*}%
Similarly, we obtain from Proposition \ref{compar_noY} that $\bar{Y}%
_{2}^{T-\delta }(t)\geq \bar{Y}_{1}^{T-\delta }\left( t\right) ,$ $t\in
\left( T-\delta ,T\right] .$ Repeating this procedure, we deduce that for
each $n\in \mathbb{N},$ 
\begin{equation}
\bar{Y}_{n}^{T-\delta }(t)\geq \cdot \cdot \cdot \geq \bar{Y}_{2}^{T-\delta
}(t)\geq \bar{Y}_{1}^{T-\delta }(t)\geq Y^{2}\left( t\right) ,\text{ }t\in
\left( T-\delta ,T\right] .  \label{compar_T1n}
\end{equation}%
Consequently, it suffices to prove $\bar{Y}_{n}^{T-\delta }$ converges to $%
Y_{1}$ in $\tilde{M}_{G}^{\alpha }(T-\delta ,T).$ Applying Proposition \ref%
{noYguji2} yields that%
\begin{equation}
\begin{array}{l}
\displaystyle\mathbb{\hat{E}}\left[ \int_{T-\delta }^{T}\left\vert \bar{Y}%
_{n}^{T-\delta }(t)-\bar{Y}_{n-1}^{T-\delta }(t)\right\vert ^{\alpha }dt%
\right]  \\ 
\displaystyle\leq C\int_{T-\delta }^{T}\mathbb{\hat{E}}\left[ \left(
\int_{t}^{T}\left\vert f^{1}(t,s,\bar{Y}_{n-1}^{T-\delta }(s),\bar{Z}%
_{n-2}^{T-\delta }(t,s))-f^{1}(t,s,\bar{Y}_{n-2}^{T-\delta }(s),\bar{Z}%
_{n-2}^{T-\delta }(t,s))\right\vert ds\right) ^{\alpha }\right] dt \\ 
\displaystyle\leq CL^{\alpha }\delta ^{\alpha }\mathbb{\hat{E}}\left[
\int_{T-\delta }^{T}\left\vert \bar{Y}_{n-1}^{T-\delta }(t)-\bar{Y}%
_{n-2}^{T-\delta }(t)\right\vert ^{\alpha }dt\right] ,%
\end{array}
\label{compar_1}
\end{equation}%
where the constant $C$ depends on $\alpha ,T,L,\underline{\sigma }.$ Set $%
\delta =\frac{1}{\sqrt[\alpha ]{2C}L},\ $then there exists a constant $k=%
\left[ \frac{T}{\delta }\right] \in \mathbb{N}$ such that $k\delta \geq T,$
where constant $C=C\left( \alpha ,T,L,\underline{\sigma }\right) $ is taken
in (\ref{compar_1}). Then we know that 
\begin{equation*}
\mathbb{\hat{E}}\left[ \int_{T-\delta }^{T}\left\vert \bar{Y}_{n}^{T-\delta
}(t)-\bar{Y}_{n-1}^{T-\delta }(t)\right\vert ^{\alpha }dt\right] \leq \frac{1%
}{2}\mathbb{\hat{E}}\left[ \int_{T-\delta }^{T}\left\vert \bar{Y}%
_{n-1}^{T-\delta }(t)-\bar{Y}_{n-2}^{T-\delta }(t)\right\vert ^{\alpha }dt%
\right] \leq \frac{1}{2^{n}}\mathbb{\hat{E}}\left[ \int_{T-\delta
}^{T}\left\vert Y^{2}\left( t\right) \right\vert ^{\alpha }dt\right] ,
\end{equation*}%
which implies that $\left\{ \bar{Y}_{n}^{T-\delta }\right\} _{n\in \mathbb{N}%
}$ is a cauchy sequence in $\tilde{M}_{G}^{\alpha }(T-\delta ,T).$ Based on
the uniqueness of the solutions to $G$-BSVIEs (\ref{compar_Y_incre2}) for $%
i=1,$ we get 
\begin{equation*}
\underset{n\rightarrow \infty }{\lim }\mathbb{\hat{E}}\left[ \int_{T-\delta
}^{T}\left\vert \bar{Y}_{n}^{T-\delta }(t)-Y^{1}(t)\right\vert ^{\alpha }dt%
\right] =0.
\end{equation*}%
Combining with (\ref{compar_T1n}), we see that
\begin{equation}
Y^{1}\left( t\right) \geq Y^{2}\left( t\right) \text{, q.s., }t\in \left(
T-\delta ,T\right] .  \label{T}
\end{equation}

Part 2: $\left( T-2\delta ,T-\delta \right] .$ Next, we define the following
sequences of $G$-BSVIEs for each $n\in \mathbb{N}:$%
\begin{equation}
\begin{array}{ll}
\displaystyle\bar{Y}_{n}^{T-2\delta }(t)= & \displaystyle\phi
^{1}(t)+\int_{t}^{T}f^{1}(t,s,\bar{Y}_{n-1}^{T-2\delta }(s)I_{[t,T-\delta
]}(s)+Y^{1}\left( s\right) I_{(T-\delta ,T]}(s),\bar{Z}_{n-1}^{T-2\delta
}(t,s))ds \\ 
& \displaystyle-\int_{t}^{T}\bar{Z}_{n-1}^{T-2\delta }(t,s)dB_{s}-\bar{K}%
_{n-1}^{T-2\delta }(t,T),\text{ }t\in \left( T-2\delta ,T-\delta \right] ,%
\end{array}
\label{compar_T2}
\end{equation}%
and $\bar{Y}_{0}^{T-2\delta }(t)=Y^{2}\left( t\right) ,$ which admits a
unique solution $\left( \bar{Y}_{n}^{T-2\delta },\bar{Z}_{n}^{T-2\delta },%
\bar{K}_{n}^{T-2\delta }\right) \in \mathfrak{\tilde{S}}_{G}^{\alpha
}(\Delta \left( T-2\delta ,T-\delta \right) )$ by Lemma \ref{uni_picard}.
For $n=1$ in (\ref{compar_T2})$,$ since (\ref{T}) holds, it follows from (%
\ref{compar_fi1}) and (\ref{compar_fi2}) that for each $\left( t,s\right)
\in \Delta \left( T-2\delta ,T-\delta \right] ,z\in \mathbb{R},$ 
\begin{equation*}
\displaystyle f^{1}(t,s,\bar{Y}_{0}^{T-2\delta }(s)I_{[t,T-\delta
]}(s)+Y^{1}\left( s\right) I_{(T-\delta ,T]}(s),z)\displaystyle\geq
f^{1}(t,s,Y^{2}\left( s\right) ,z)\geq f^{2}(t,s,Y^{2}(s),z),\text{ q.s.}
\end{equation*}%
Then by Proposition \ref{compar_noY} and $\phi ^{1}\geq \phi ^{2}$, we
derive 
\begin{equation*}
\bar{Y}_{1}^{T-2\delta }(t)\geq Y^{2}\left( t\right) =\bar{Y}_{0}^{T-2\delta
}(t),\text{ q.s., }t\in \left( T-2\delta ,T-\delta \right] .
\end{equation*}
For $n=2$ in (\ref{compar_T2})$,$ similar to the above analysis, we deduce
from condition (\ref{compar_fi1}) that 
\begin{equation*}
f^{1}(t,s,\bar{Y}_{1}^{T-2\delta }(s)I_{[t,T-\delta ]}(s)+Y^{1}\left(
s\right) I_{(T-\delta ,T]}(s),z)\geq f^{1}(t,s,\bar{Y}_{0}^{T-2\delta
}(s)I_{[t,T-\delta ]}(s)+Y^{1}\left( s\right) I_{(T-\delta ,T]}(s),z),\text{
q.s.}
\end{equation*}%
From Proposition \ref{compar_noY}, we have%
\begin{equation*}
\bar{Y}_{2}^{T-2\delta }(t)\geq \bar{Y}_{1}^{T-2\delta }(t),\text{ q.s., }%
t\in \left( T-2\delta ,T-\delta \right] .
\end{equation*}
Then, repeating this procedure, we get 
\begin{equation}
\bar{Y}_{n}(t)\geq \cdot \cdot \cdot \geq \bar{Y}_{2}^{T-2\delta }(t)\geq 
\bar{Y}_{1}^{T-2\delta }(t)\geq \bar{Y}_{0}^{T-2\delta }(t)=Y^{2}\left(
t\right) ,\text{ q.s., }t\in \left( T-2\delta ,T-\delta \right] .
\label{compar_T2n}
\end{equation}%
Moreover, by similar analysis\ as in (\ref{Y_unique_T2}), we deduce that $%
\bar{Y}_{n}^{T-2\delta }\ $converges to $Y^{1}$ in $\tilde{M}_{G}^{\alpha
}(T-2\delta ,T-\delta ).$ Combining with (\ref{compar_T2n}), we derive $%
Y^{1}\left( t\right) \geq Y^{2}\left( t\right) ,$ q.s., $t\in \left(
T-2\delta ,T-\delta \right] .$

Arguing by induction, since 
\begin{equation*}
Y^{1}\left( t\right) =\overset{k-2}{\underset{i=0}{\sum }}Y^{1}\left(
t\right) I_{\left( T-(i+1)\delta ,T-i\delta \right] }\left( t\right)
+Y^{1}\left( t\right) I_{\left[ 0,T-\left( k-1\right) \delta \right] }\left(
t\right) ,
\end{equation*}%
we conclude that 
$Y^{1}\left( t\right) \geq Y^{2}\left( t\right) \text{, q.s., }t\in \left[ 0,T%
\right] $.
\end{proof}
\end{theorem}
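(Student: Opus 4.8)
The plan is to reduce the comparison for the full $G$-BSVIE (\ref{compar_Y_incre2}) to the comparison already established for generators independent of $y$ (Proposition \ref{compar_noY}), by running a monotone Picard iteration. The essential difficulty, as emphasized in the introduction, is the extra decreasing $G$-martingale term $K^{1}-K^{2}$: since $G$-expectation is nonlinear and linear combinations of $G$-martingales are not $G$-martingales, one cannot directly compare $Y^{1}-Y^{2}$ with $0$, and the usual linearization argument for BSDEs fails. I would therefore not estimate $Y^{1}-Y^{2}$ directly, but instead freeze the $y$-variable in the generator using the previous Picard iterate, so that each step of the iteration becomes a $G$-BSVIE whose generator no longer depends on $y$, to which Proposition \ref{compar_noY} applies. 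Because (\ref{compar_Y_incre2}) has no semigroup property, this must be carried out interval by interval, along the partition $\{T-i\delta\}$ used in the existence proof (Theorem \ref{Y_unique}).

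First I would treat the terminal interval $\left(T-\delta,T\right]$, assuming without loss of generality that the monotonicity condition (\ref{compar_fi1}) holds for $i=1$. Define the iteration $\bar{Y}_{n}^{T-\delta}$ by (\ref{compar_T1}) with data $f^{1},\phi^{1}$, started from $\bar{Y}_{0}^{T-\delta}=Y^{2}$; Lemma \ref{uni_picard} guarantees each iterate lies in $\mathfrak{\tilde{S}}_{G}^{\alpha}(\Delta\left(T-\delta,T\right))$. The monotonicity then propagates: at the first step, $\phi^{1}\geq\phi^{2}$ together with $f^{1}(t,s,Y^{2},z)\geq f^{2}(t,s,Y^{2},z)$ from (\ref{compar_fi2}) gives, via Proposition \ref{compar_noY}, $\bar{Y}_{1}^{T-\delta}\geq Y^{2}$; and inductively, $\bar{Y}_{n}^{T-\delta}\geq\bar{Y}_{n-1}^{T-\delta}$ combined with the monotonicity of $f^{1}$ in $y$ yields $f^{1}(t,s,\bar{Y}_{n}^{T-\delta},z)\geq f^{1}(t,s,\bar{Y}_{n-1}^{T-\delta},z)$, whence a second application of Proposition \ref{compar_noY} gives $\bar{Y}_{n+1}^{T-\delta}\geq\bar{Y}_{n}^{T-\delta}$. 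Hence the sequence is nondecreasing and dominated below by $Y^{2}$.

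Next I would establish convergence. Choosing $\delta$ small as in (\ref{T0process})--(\ref{compar_1}), so that the $L_{G}^{\alpha}$ a priori estimate of Proposition \ref{noYguji2} becomes a contraction with factor $\tfrac{1}{2}$, the sequence $\{\bar{Y}_{n}^{T-\delta}\}$ is Cauchy in $\tilde{M}_{G}^{\alpha}(T-\delta,T)$ and, by the uniqueness part of Theorem \ref{Y_unique} applied to $(f^{1},\phi^{1})$, converges to $Y^{1}$. Passing to the limit in $\bar{Y}_{n}^{T-\delta}\geq Y^{2}$ yields $Y^{1}\geq Y^{2}$, q.s., on $\left(T-\delta,T\right]$. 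I would then iterate backward: on $\left(T-2\delta,T-\delta\right]$ set up the analogous scheme (\ref{compar_T2}), feeding the already-known values $Y^{1}$ on $\left(T-\delta,T\right]$ into the $y$-slot on that future subinterval; the comparison just obtained on $\left(T-\delta,T\right]$, together with (\ref{compar_fi1}), supplies the base-step inequality $f^{1}\geq f^{2}(t,s,Y^{2},z)$ needed to restart the monotone Picard scheme. Repeating over the finitely many intervals and patching via (\ref{Y_exist}) gives $Y^{1}\geq Y^{2}$ on all of $\left[0,T\right]$.

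The step I expect to be the main obstacle is correctly transporting the monotonicity across the interval boundaries in the backward induction: on each new interval the generator $f^{1}$ is evaluated at a process equal to $Y^{1}$ on the already-solved future part and to the running iterate on the current part, and one must verify carefully that the comparison established on the future interval feeds into the hypothesis of Proposition \ref{compar_noY} at the base step $n=1$. This is precisely where both the monotonicity assumption (\ref{compar_fi1}) and the pointwise domination (\ref{compar_fi2}) are used simultaneously, and where the lack of time-consistency forbids a single global argument.
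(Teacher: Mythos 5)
Your proposal is correct and takes essentially the same approach as the paper's proof: a monotone Picard iteration started from $Y^{2}$ with the $y$-argument frozen at the previous iterate (so that Proposition \ref{compar_noY} applies at each step), carried out backward over the local intervals $\left(T-(i+1)\delta,T-i\delta\right]$, with convergence to $Y^{1}$ obtained from the contraction estimate of Proposition \ref{noYguji2} and the uniqueness of the solution. Even the step you flag as the main obstacle—using (\ref{compar_fi1}) and (\ref{compar_fi2}) together to restart the iteration across an interval boundary, with $Y^{1}$ inserted in the $y$-slot on the already-solved future part—is precisely how the paper handles Part 2.
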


\end{document}